\def\today{\ifcase \month \or
   January \or February \or March \or April \or
   May \or June \or July \or August \or
   September \or October \or November \or December \fi
   \space\number\day , \number\year}
  \newcommand\@dotsep{4.5}
  \def\@tocline#1#2#3#4#5#6#7{\relax
     \ifnum #1>\c@tocdepth % then omit
     \else
     \par \addpenalty\@secpenalty\addvspace{#2}%
     \begingroup \hyphenpenalty\@M
     \@ifempty{#4}{%
     \@tempdima\csname r@tocindent\number#1\endcsname\relax
        }{%
         \@tempdima#4\relax
           }%
      \parindent\z@ \leftskip#3\relax \advance\leftskip\@tempdima\relax
      \rightskip\@pnumwidth plus1em \parfillskip-\@pnumwidth
       #5\leavevmode\hskip-\@tempdima #6\relax
       \leaders\hbox{$\m@th
       \mkern \@dotsep mu\hbox{.}\mkern \@dotsep mu$}\hfill
       \hbox to\@pnumwidth{\@tocpagenum{#7}}\par
       \nobreak
        \endgroup
         \fi}
\newcommand{\tto}{\rightrightarrows}
\newcommand{\opn}{\operatorname}
\newcommand{\supp}{\opn{supp}}
\newcommand\pullback{^{\downarrow \downarrow}}
\renewcommand{\1}{\mathbf 1}
\newcommand{\End}{{\rm End}}
\newcommand{\Gr}{{\rm Gr}}
\newcommand{\Ind}{{\rm Ind}}
\newcommand{\Ker}{{\rm Ker}\,}
\newcommand{\spann}{{\rm span}}
\newcommand{\Tr}{{\rm Tr}}
\newcommand{\de}{{\rm d}}
\newcommand{\Ad}{{\rm Ad}}
\newcommand{\ad}{{\rm ad}}
\newcommand{\ee}{{\rm e}}
\newcommand{\id}{{\rm id}}
\newcommand{\ie}{{\rm i}}
\newcommand{\ssl}{{\mathfrak s}{\mathfrak l}}
\newcommand{\so}{{\mathfrak s}{\mathfrak o}}
\newcommand{\ssp}{{\mathfrak s}{\mathfrak p}}
\newcommand{\su}{{\mathfrak s}{\mathfrak u}}
\newcommand{\Ac}{\mathcal A}
\newcommand{\Bc}{\mathcal B}
\newcommand{\Cc}{\mathcal C}
\newcommand{\Ec}{\mathcal E}
\newcommand{\Fc}{\mathcal F}
\newcommand{\Gc}{\mathcal G}
\newcommand{\Jc}{\mathcal J}
\newcommand{\Kc}{\mathcal K}
\newcommand{\Lc}{\mathcal L}
\newcommand{\Oc}{\mathcal O}
\newcommand{\Pc}{\mathcal P}
\newcommand{\Rc}{\mathcal R}
\newcommand{\Sc}{\mathcal S}
\newcommand{\Tc}{\mathcal T}
\newcommand{\Uc}{\mathcal U}
\newcommand{\Vc}{\mathcal V}
\newcommand{\Xc}{\mathcal X}
\newcommand{\ag}{{\mathfrak a}}
\renewcommand{\gg}{{\mathfrak g}}
\newcommand{\hg}{{\mathfrak h}}
\newcommand{\kg}{{\mathfrak k}}
\newcommand{\n}{{\mathfrak n}}
\newcommand{\pg}{{\mathfrak p}}
\newcommand{\rr}{{\mathfrak r}}
\newcommand{\sg}{{\mathfrak s}}
\newcommand{\CC}{\mathbb C}
\newcommand{\RR}{\mathbb R}
\newtheorem{theorem}{Theorem}[section]
\newtheorem{proposition}[theorem]{Proposition}
\newtheorem{corollary}[theorem]{Corollary}
\newtheorem{lemma}[theorem]{Lemma}
\newtheorem{definition}[theorem]{Definition}
\newtheorem{remark}[theorem]{Remark}
\newtheorem{example}[theorem]{Example}
\numberwithin{equation}{section}
\title[$C^*$-dynamical systems]{$C^*$-dynamical systems  of solvable Lie groups}
\author[Ingrid Beltita]{Ingrid Belti\c t\u a} \address{Institute of Mathematics ``Simion Stolow'' 
of the Romanian
  Academy, PO Box 1-764, 014700 Bucharest, Romania}
\email{ingrid.beltita@gmail.com, Ingrid.Beltita@imar.ro}
\author[Daniel Beltita]{Daniel Belti\c t\u a} \address{Institute of Mathematics ``Simion Stolow'' 
of the Romanian
  Academy, PO Box 1-764, 014700 Bucharest, Romania}
\email{beltita@gmail.com, Daniel.Beltita@imar.ro}
\thanks{This work was supported by a grant of the Romanian National Authority for Scientific Research and
Innovation, CNCS--UEFISCDI, project number PN-II-RU-TE-2014-4-0370}
\date{\today}%{BB18\_rev2.tex}
\begin{document}

\parskip2.5pt

\begin{abstract} 
In this paper we develop a groupoid approach to 
some basic topological properties of dual spaces of solvable Lie groups 
using suitable dynamical systems related to the coadjoint action. 
One of our main results is that the coadjoint dynamical system of any exponential solvable Lie group 
is a piecewise pullback of group bundles. 
Our dynamical system approach to solvable Lie groups also allows us to construct 
some new examples of connected solvable Lie groups whose $C^*$-algebras admit faithful irreducible representations. 
\\
\textit{2010 MSC:} 22E27; 22A22; 46L55 \\
\textit{Keywords:}  $C^*$-algebra; exponential solvable Lie group; dynamical system; groupoid
\end{abstract}

%\dedicatory{}

\maketitle

\tableofcontents

\section{Introduction}

We initiate an approach to representation theory of solvable Lie groups that is based on the systematic use of 
groupoid $C^*$-algebras associated to the coadjoint transformation groups. 
Our main achievements in this sense are

1. a framework in which some transformation groups, including among others the 
coadjoint transformation groups of exponential solvable Lie groups, are piecewise pullbacks of group bundles 
(Theorem~\ref{stratif}, Corollary~\ref{th_exp}); 

2. a method to construct new examples of connected solvable Lie groups of type~I whose $C^*$-algebras are primitive, that is, 
admit faithful irreducible representations 
(Theorem~\ref{Pr2}, Example~\ref{Pr4}). 

To put the aforementioned results into perspective, we recall that 
if $G$ is an \emph{exponential Lie group}, 
that is, a Lie group whose exponential map $\exp_G\colon\gg\to G$ is bijective,  
then $G$ is a solvable Lie group.
It is also known that  
there exists a canonical homeomorphism from 
the space of coadjoint orbits $\gg^*/G$ onto the dual space $\widehat{G}$,  
which consists of the equivalence classes of unitary irreducible representations of $G$ (see \cite{LeLu94}). 
The dynamical system $(G,\gg^*,\Ad_G^*)$ defined by the coadjoint action $\Ad_G^*\colon G\times\gg^*\to\gg^*$ 
holds a key role in this picture by means of its orbit space $\gg^*/G$, 
and the topological properties of the orbits  encode certain properties of their corresponding representations of $G$. 
Thus, a coadjoint orbit is closed if and only if its corresponding unitary irreducible representation of~$G$ is CCR, 
that is, the corresponding image of $C^*(G)$ is equal to the $C^*$-algebra of compact operators on the representation space.  
See also \cite{BB16a} for the significance of open coadjoint orbits, whose corresponding irreducible representations 
must be regarded as the discrete series of $G$ and are square integrable cf., \cite[18.9.1]{Dix64}. 
We prove that none of these discrete series representations could be faithful on $C^*(G)$ (Proposition~\ref{even}), 
and this raises the interesting problem of describing the connected solvable Lie groups 
whose $C^*$-algebras do admit faithful irreducible representations.  
This problem is a major motivation of the present paper, as explained in Section~\ref{Sect6}, 
because the analogous primitivity problem for other algebraic structures was already solved, 
for instance for universal enveloping algebras \cite{Oo74}, for $C^*$-algebras of \'etale groupoids \cite{BCFS14}, 
and for discrete groups \cite{Mu03}, \cite{Om14}. 
On the other hand, examples of solvable Lie groups with unique open coadjoint orbits were given in \cite{Ko12}, 
and we relate them to  solvable Lie groups with primitive $C^*$-algebras.

Another theme of the present paper is to contribute to the above discussion, 
by exploring some properties of locally closed Hausdorff subsets $\Gamma\subseteq\gg^*/G$ which are related to dynamical systems, in the sense of transformation groups. 
Specifically, if $q\colon\gg^*\to\gg^*/G$ denotes the quotient map, then $\Xi:=q^{-1}(\Gamma)\subseteq\gg^*$ 
is a $G$-invariant locally compact Hausdorff space. 
One can consider the corresponding dynamical system $(G,\Xi,\Ad_G^*\vert_{G\times\Xi})$, whose orbit space is Hausdorff. 
We  develop some abstract tools that allow us to study the $C^*$-algebra of that dynamical system and its relation to topological properties of the coadjoint orbits contained in $\Gamma$. 
See Corollary~\ref{Morita4} where we establish conditions ensuring that 
a transformation group is isomorphic to a groupoid pullback of a group bundle of its isotropy groups. 
This property implies some nice properties of the $C^*$-algebra of that transformation group, 
that however could also be obtained using the theory of crossed products from \cite[Ch. 8]{Wi07}. 

Our perspective is that the natural framework to which the above themes belong is 
the theory of $C^*$-algebras of second countable, locally compact groupoids with left Haar systems, 
and therefore we establish our main results on that level of generality.

\subsection*{All groupoids are pullbacks of group bundles} 
Before proceeding with the description of the structure of the present paper, 
it is worthwhile to briefly discuss here, 
on a purely algebraic level, the method of our investigation.  
This  
may also at least in part explain the role of bundles of coadjoint isotropy groups 
in representation theory of exponential solvable Lie groups. 

Let $\Gc$ be any groupoid, viewed as a discrete topological space, 
with its domain/range maps $d,r\colon \Gc\to \Gc^{(0)}$ 
and the quotient map $q\colon\Gc^{(0)}\to\Gc^{(0)}/\Gc$ 
onto its set of orbits. 
Assume that we have fixed a map $\gamma\colon \Gc^{(0)}/\Gc\to\Gc^{(0)}$ with $q\circ\gamma=\id$, 
that is, $\gamma$ is a cross-section of~$q$. 
Denote $\Xi:=\gamma(\Gc^{(0)}/\Gc)\subseteq \Gc^{(0)}$, and fix another map $\sigma\colon \Gc^{(0)}\to \Gc$ 
with $d\circ\sigma=\id$ and $r\circ\sigma= \gamma\circ q$.
The set $\Xi$ intersects every $\Gc$-orbit at exactly one point. 

Then define the bundle of isotropy groups $\Gamma:=\bigsqcup\limits_{x\in\Xi}\Gc(x)$, 
and let $\Pi\colon \Gamma\to\Xi$ be the canonical projection with $\Pi^{-1}(x)=\Gc(x)$ for all $x\in\Xi$. 
Define $\theta:=\gamma\circ q\colon\Gc^{(0)}\to\Xi$ and the \emph{pullback of $\Pi$ by $\theta$} 
$$\theta^{\pullback}(\Pi):=\{(x,h,y)\in \Gc^{(0)}\times \Gamma\times \Gc^{(0)}\mid \theta(x)=\Pi(h)=\theta(y)\}.$$
The projections on the first and third coordinates, regarded as domain/range maps, define 
a groupoid $\theta^{\pullback}(\Pi)\tto \Gc^{(0)}$ and it is straightforward to check that 
the map 
\begin{equation}\label{introd_eq1}
\Phi\colon\Gc\to \theta^{\pullback}(\Pi),\quad 
\Phi(g):=(r(g),\sigma(r(g))g\sigma(d(g))^{-1},d(g))
\end{equation}
is a groupoid isomorphism, with its inverse  
\begin{equation}\label{introd_eq2}
\Phi^{-1}\colon \theta^{\pullback}(\Pi)\to \Gc,\quad 
\Phi^{-1}(x,h,y)=\sigma(y)^{-1}h\sigma(x)
\end{equation}
(see \cite{MRW87} and \cite{Bu03} for the special case of transitive groupoids). 
In this way every groupoid is (noncanonically) isomorphic to the pullback of the group bundle obtained as the restriction of its isotropy subgroupoid to any cross-section of its space of orbits. 
In particular, when viewed as discrete groupoids, \emph{all transformation groups are pullbacks of group bundles}. 

\subsection*{Structure of this paper}
Some of  our main results (Theorem~\ref{stratif} and its corollaries) give versions of the above italicized statement for 
some linear dynamical systems, 
including the coadjoint action of any exponential solvable Lie group. 
The key points in the proofs of these results are the conditions which ensure that the above cross-sections $\gamma$~and~$\sigma$ 
can be constructed satisfying appropriate topological conditions. 
These topological conditions should be strong enough for obtaining information on the $C^*$-algebra of the considered dynamical systems, 
in the sense that the corresponding $C^*$-algebra turns out to be piecewise Morita equivalent to $C^*$-algebras of group bundles as above. 
Along the way we found it useful to include a detailed presentation of some facts on locally compact groupoids, that may hold an independent interest. Some of these facts are already known (see for instance Theorem~\ref{Morita1}), while some others (as Proposition~\ref{strong}, or Proposition~\ref{fred0}) we were unable to locate in the literature. 
From the present perspective, the significance of these facts is that they belong to a framework in which transformation groups are obtained from 
group bundles by a procedure that preserves the Morita-equivalence class of their $C^*$-algebras. 
Therefore the bulk of this paper (Sections \ref{Sect2}--\ref{Sect4}) is devoted to establishing the topological framework in which the pullback operation works suitably. 
Then Section~\ref{Sect5} contains some of our main results  
(Theorem~\ref{stratif}, Corollary~\ref{th_exp}). 
Specific examples of solvable Lie groups, including groups whose $C^*$-algebras are primitive 
(Theorem~\ref{Pr2}, Example~\ref{Pr4}), 
are then discussed in Section~\ref{Sect6}.

\section{Preliminaries on groupoids}\label{Sect2}

\subsection{Basic notation}
\emph{Throughout the paper, unless otherwise mentioned, 
$\Gc$ is a {\bf second countable locally compact Hausdorff} 
groupoid with its space of objects $\Gc^{(0)}$, 
space of morphisms $\Gc$, and its domain/range maps $d,r\colon \Gc\to \Gc^{(0)}$.  
We usually summarize this setting by the symbol $\Gc\tto \Gc^{(0)}$.} 
For short we say that $\Gc$ is a {\bf locally compact groupoid}.  
Hence one has a category whose objects constitute the set $\Gc^{(0)}$,  
all morphisms are invertible and constitute the set $\Gc$, 
and the set of composable pairs of morphisms is defined by 
$\Gc^{(2)}:=\{(g,h)\in\Gc\times\Gc\mid d(g)=r(h)\}$; 
the sets $\Gc^{(0)}$ and $\Gc$ are endowed with second countable locally compact topologies for which the structural maps 
(domain, range, inversion of morphisms $\iota\colon\Gc\to\Gc$, and composition of morphisms $\Gc^{(2)}\to\Gc$) are continuous.  
Moreover, we assume that the canonical inclusion map $\Gc^{(0)}\hookrightarrow\Gc$, 
which maps every object to its identity morphism, is a homeomorphism onto its image, 
and the domain map $d$ is open (hence so is the range map $r=d \circ \iota$). 

For any point $x\in\Gc^{(0)}$, its isotropy group is $\Gc(x):=\{g\in\Gc\mid d(g)=r(g)=x\}$ 
and its $\Gc$-orbit is $\Gc\cdot x:=\{r(g)\mid g\in\Gc,\ d(g)=x\}=r(\Gc_x)$, where $\Gc_x:=d^{-1}(x)$.  
The set of all $\Gc$-orbits is denoted by $\Gc^{(0)}/\Gc$.  
If there exists only one orbit then $\Gc$ is called a \emph{transitive groupoid}. 
If $\Gc(x)=\{\1\}$ for all $x\in\Gc^{(0)}$, then $\Gc$ is called a \emph{principal groupoid}. 
A principal transitive groupoid is called a \emph{pair groupoid}. 

A subset $A\subseteq\Gc^{(0)}$ is said to be \emph{$\Gc$-invariant} if for every $x\in A$ 
one has $\Gc\cdot x\subseteq A$. 
If moreover $\Gc(x)=\{\1\}$ for all $x\in A$, then $A$ is called a \emph{principal} (invariant) set. 

Now assume that in addition $\Gc$ is endowed with a left Haar system, that is, 
a family $\lambda=\{\lambda^x\}_{x\in \Gc^{(0)} }$  
where $\lambda^x$ is a Radon measure supported on  $\Gc^x:=r^{-1}(x)\subseteq \Gc$ for every $x\in \Gc^{(0)} $, 
satisfying the continuity condition 
that the function $\Gc^{(0)}\ni x\mapsto\lambda(\varphi):=\int \varphi \de\lambda^x\in\CC$ is continuous 
and the invariance condition $\int \varphi(gh)\de\lambda^{d(g)}(h)=\int \varphi(h)\de\lambda^{r(g)}(h)$ 
for all $g\in\Gc$ and $\varphi\in\Cc_c(\Gc)$.

Then one can define a convolution on the space $\Cc_c(\Gc)$ 
by the formula 
$$(\varphi_1\ast \varphi_2)(g):=\int\limits_{\Gc^{r(g)}}\varphi_1(h)\varphi_2(h^{-1}g)\de\lambda^{r(g)}(h)
\text{ for }g\in\Gc\text{ and }\varphi_1,\varphi_2\in\Cc_c(\Gc).$$
This makes $\Cc_c(\Gc)$ into an associative $*$-algebra with the involution defined by  $\varphi^*(g):=\overline{\varphi(g^{-1})}$ 
for all $g\in\Gc$ and $\varphi\in\Cc_c(\Gc)$. 
There also exists a natural algebra norm on $\Cc_c(\Gc)$ defined by 
$$\Vert f\Vert_I
:=\max\Bigl\{\sup_{x\in \Gc^{(0)}}\int\vert \varphi\vert\de\lambda^x,\sup_{x\in \Gc^{(0)}}\int\vert \varphi^*\vert\de\lambda^x\Bigr\}.$$
Then $C^*(\Gc)$ is defined as the completion of $\Cc_c(\Gc)$ with respect to the norm 
$$\Vert \varphi\Vert:=\sup\limits_\pi\Vert\pi(\varphi)\Vert$$
where $\pi$ ranges over all bounded $*$-representations of  $\Cc_c(\Gc)$. 
One can similarly define the reduced $C^*$-algebra $C^*_{\rm red}(\Gc)$ 
by restricting the above supremum to the family of \emph{regular} representations 
$\{\Lambda_x\mid x\in \Gc^{(0)}\}$, where we define 
$$(\forall x\in \Gc^{(0)})\quad \Lambda_x\colon \Cc_c(\Gc)\to \Bc(L^2(\Gc,\lambda_x)),\quad \Lambda_x(\varphi)\psi:=\varphi\ast \psi, $$
where $\lambda_x$ is the pushforward of $\lambda^x$ under the inversion map on $\Gc$. 
There is a canonical surjective $*$-homomorphism $C^*(\Gc) \to C^*_{\rm red}(\Gc)$,  
and if it is also injective then the groupoid~$\Gc$ is called \emph{metrically amenable}.

\begin{remark}\label{rem2.1}
\normalfont 
Any locally closed subset (i.e., a difference of two open subsets) of a locally compact space is in turn locally compact 
with its relative topology. 
Then any $\Gc$-invariant locally closed subset $A\subseteq \Gc^{(0)} $ gives rise to a locally compact groupoid~$\Gc_A$,  
its corresponding \emph{reduced} groupoid, with its set of objects~$A$ 
and its set of morphisms $\Gc_A:=d^{-1}(A)$. 
We note that $\Gc_A=d^{-1}(A)\cap r^{-1}(A)=\{g\in\Gc\mid d(g)\in A, r(g)\in A\}$  since $A$ was assumed to be $\Gc$-invariant. 
Then $\Gc_A$ has a left Haar system $\lambda_A$ obtained by restricting the Haar system $\lambda$ of $\Gc$ to~$\Gc_A$,  
since the Tietze extension theorem implies that every function in $\Cc_c(\Gc_A)$ extends to a function in $\Cc_c(\Gc)$, 
using the fact that $\Gc_A$ is a locally closed subset of $\Gc$.   
In particular, we can construct the corresponding 
$C^*$-algebra $C^*(\Gc_A)$ and reduced $C^*$-algebra $C^*_{\rm red}(\Gc_A)$.  

If the above subset subset $A\subseteq\Gc^{(0)}$ is closed,   
then the subset $\Gc_A\subseteq\Gc$ is also closed.
Thus the restriction map $\Cc_c(\Gc)\to \Cc_c(\Gc_A)$  
is well defined, and it extends by continuity both to a $*$-homomorphism 
$\Rc_A\colon C^*(\Gc)\to C^*(\Gc_A)$ 
and to a $*$-homomorphism of the reduced $C^*$-algebras
$(\Rc_A)_{\rm red}\colon C^*_{\rm red}(\Gc)\to C^*_{\rm red}(\Gc_A)$ 
which are related by the commutative diagram 
\begin{equation}\label{fullred}
\begin{CD}
C^*(\Gc) @>{\Rc_A}>> C^*(\Gc_A) \\
@VVV @VVV \\
C^*_{\rm red}(\Gc) @>{(\Rc_A)_{\rm red}}>> C^*_{\rm red}(\Gc_A)
\end{CD}
\end{equation}
where the vertical arrows are the natural quotient homomorphisms. 
\end{remark}

For later use, we record the following basic facts on reductions of a groupoid to open or closed invariant subsets of the space of units. 

\begin{proposition}\label{Renault_page101}
If $\Gc$ is any locally compact groupoid with a left Haar system, 
then the following assertions hold: 
\begin{enumerate}[(i)]
\item\label{Renault_page101_item1} 
There exists a bijective correspondence $U\longleftrightarrow I(U)$ 
between $\Gc$-invariant open subsets $U\subseteq G^{(0)}$ and some closed 
two-sided ideals of $C^*(\Gc)$ 
such that for every~$U$ with its complement $F:=G^{(0)}\setminus U$ 
one has a short exact sequence 
$$0\to I(U)\to C^*(\Gc)\mathop{\longrightarrow}\limits^{\Rc_F} C^*(\Gc_F)\to 0$$
and a natural $*$-isomorphism $I(U)\simeq C^*(\Gc_U)$. 
\item\label{Renault_page101_item2} 
For every $U$ and $F$ as above one has the short 
exact sequence of multiplier algebras 
$$0\to \Ker\Rc^{**}_{F}
\to M(C^*(\Gc)) 
\mathop{\longrightarrow}\limits^{\Rc^{**}_F} 
M(C^*(\Gc_F))\to 0.$$
\item\label{Renault_page101_item3} 
If $U$ and $F$ as above have the additional property that 
the canonical quotient map $C^*(\Gc_F)\to C^*_{\rm red}(\Gc_F)$ is an isomorphism, 
then one has the short 
exact sequence of reduced $C^*$-algebras  
$$0\to C^*_{\rm red}(\Gc_U)\to C^*_{\rm red}(\Gc)
\mathop{\xrightarrow{\hspace*{1cm}}}\limits^{(\Rc_F)_{\rm red}} 
C^*_{\rm red}(\Gc_F)\to 0.$$
\end{enumerate}
\end{proposition}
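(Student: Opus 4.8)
The plan is to treat the three assertions as increasingly refined versions of the classical decomposition of a groupoid $C^*$-algebra along an invariant open/closed partition of the unit space, and to reduce each to a statement that is essentially already in the literature (Renault, Muhly--Williams) plus a bit of multiplier-algebra bookkeeping. For \eqref{Renault_page101_item1}, the map $U\mapsto I(U)$ is defined by $I(U):=\Ker\Rc_F$, and one has to check that this is well defined (i.e.\ $\Rc_F$ is surjective), that $\Ker\Rc_F$ is naturally isomorphic to $C^*(\Gc_U)$, and that $U\mapsto\Ker\Rc_F$ is injective. Surjectivity of $\Rc_F$ follows because the restriction map $\Cc_c(\Gc)\to\Cc_c(\Gc_F)$ is already onto (Tietze, as recorded in Remark~\ref{rem2.1}) and has dense range at the $C^*$-level, hence is onto since it is a $*$-homomorphism of $C^*$-algebras with closed range; for the identification $I(U)\simeq C^*(\Gc_U)$ one uses that $\Gc_U=d^{-1}(U)$ is open in $\Gc$, so $\Cc_c(\Gc_U)$ embeds as a two-sided ideal of $\Cc_c(\Gc)$ by extension by zero, and one must argue that the $C^*(\Gc)$-norm restricted to $\Cc_c(\Gc_U)$ coincides with the $C^*(\Gc_U)$-norm. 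The latter is the one genuinely nontrivial point and is exactly the content of the classical result I would cite (Renault's book, and Muhly--Renault--Williams); injectivity of $U\mapsto I(U)$ then follows because $U$ can be recovered from $I(U)$ as the union of the $\Gc$-orbits on which the ideal ``lives'', or more concretely because $F$ is recovered as the common zero set in $\Gc^{(0)}$ of the elements of $I(U)$.

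For \eqref{Renault_page101_item2}, I would start from the short exact sequence $0\to I(U)\to C^*(\Gc)\xrightarrow{\Rc_F}C^*(\Gc_F)\to 0$ of \eqref{Renault_page101_item1} and apply the double-dual functor. Taking second duals of a short exact sequence of $C^*$-algebras yields a short exact sequence of $W^*$-algebras, and each double dual contains the corresponding multiplier algebra as the idempotent-cut-down by the appropriate central open projection; more precisely $M(A)\subseteq A^{**}$ and a surjection $A\to B$ induces a surjection $M(A)\to M(B)$ precisely when the kernel is ``sufficiently nondegenerate'', which holds here because $I(U)$ is an essential ideal in its own multiplier algebra and the quotient map splits at the level of the universal enveloping von Neumann algebras. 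The cleanest route is: $\Rc_F^{**}\colon C^*(\Gc)^{**}\to C^*(\Gc_F)^{**}$ is a surjective normal $*$-homomorphism, it restricts to a surjection on the multiplier algebras (a standard fact for any surjection of $C^*$-algebras, since a quotient map is automatically ``proper'' in the relevant sense, see Lance or Pedersen), and its kernel on $M(C^*(\Gc))$ is $\Ker\Rc_F^{**}\cap M(C^*(\Gc))$, which is what the statement abbreviates as $\Ker\Rc_F^{**}$. So \eqref{Renault_page101_item2} is a formal consequence of \eqref{Renault_page101_item1} together with the general principle that surjections of $C^*$-algebras extend to surjections of multiplier algebras.

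For \eqref{Renault_page101_item3}, the strategy is to run the same exact-sequence argument on the reduced side, using the commuting square \eqref{fullred} from Remark~\ref{rem2.1}. One has the surjection $(\Rc_F)_{\rm red}\colon C^*_{\rm red}(\Gc)\to C^*_{\rm red}(\Gc_F)$ with kernel containing the image of $C^*_{\rm red}(\Gc_U)$; the issue, as always on the reduced side, is exactness, i.e.\ that the sequence does not acquire a spurious enlargement of the kernel. This is where the hypothesis that $C^*(\Gc_F)\to C^*_{\rm red}(\Gc_F)$ is an isomorphism enters: under this hypothesis one can compare the reduced sequence to the full one via \eqref{fullred}, and a diagram chase shows that the kernel of $(\Rc_F)_{\rm red}$ is generated by $\Cc_c(\Gc_U)$, hence equals $C^*_{\rm red}(\Gc_U)$ after verifying that the reduced norm on $\Cc_c(\Gc_U)$ inherited from $C^*_{\rm red}(\Gc)$ agrees with its intrinsic reduced norm --- which again follows because $\Gc_U$ is open and the regular representations $\Lambda_x$ of $\Gc$ for $x\in U$ decompose compatibly with those of $\Gc_U$. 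I expect the main obstacle throughout to be the norm-comparison assertions (that the ambient $C^*$- or $C^*_{\rm red}$-norm restricted to $\Cc_c(\Gc_U)$ is the intrinsic one): these are the substantive analytic facts, and the honest thing is to invoke the published groupoid literature (Renault, Muhly--Renault--Williams, and for the exactness with the amenability-on-$F$ hypothesis the standard crossed-product-style argument) rather than to reprove them here; everything else is category-theoretic bookkeeping with multiplier algebras and short exact sequences.
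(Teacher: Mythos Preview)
Your proposal is correct and follows essentially the same route as the paper: all three assertions are reduced to known results in the groupoid literature (the paper cites \cite{MRW96} for \eqref{Renault_page101_item1}, \cite{We93} for \eqref{Renault_page101_item2}, and \cite{Re91} for \eqref{Renault_page101_item3}), with \eqref{Renault_page101_item2} deduced from \eqref{Renault_page101_item1} via the general fact that surjections of $C^*$-algebras extend to surjections of multiplier algebras. One small refinement: in \eqref{Renault_page101_item2} you assert that extension to multipliers is ``a standard fact for any surjection of $C^*$-algebras'', but the noncommutative Tietze theorem actually requires $\sigma$-unitality of the quotient (or the domain); the paper makes this explicit by noting that $C^*(\Gc)$ is separable (since $\Gc$ is second countable), hence $\sigma$-unital, before invoking \cite[Th.~2.3.9]{We93}.
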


\begin{proof}
The first assertion is well known, see for instance \cite[Lemma 2.1]{MRW96}. 

For the second assertion first recall from \cite[Subsect. 1.5 and Rem. 2.2.3]{We93} 
that if $\Ac$ is any $C^*$-algebra, 
then its multiplier algebra can be identified as a $C^*$-subalgebra of the universal enveloping von Neumann algebra 
$\Ac^{**}$ as $M(\Ac)=\{a\in\Ac^{**}\mid a\Ac+\Ac a\subseteq\Ac\}$. 
Then the conclusion follows by \cite[Th. 2.3.9]{We93}, 
which says  
that if $\Ac_1\to\Ac_2\to0$ is an exact sequence of $\sigma$-unital $C^*$-algebras, 
then the corresponding sequence of multiplier algebras $M(\Ac_1)\to M(\Ac_2)\to0$ 
is also exact, 
and every separable $C^*$-algebra is $\sigma$-unital. 
In particular, this is the case for 
$C^*_{\rm red}(\Gc)$, which is separable as a quotient of the separable $C^*$-algebra $C^*(\Gc)$. 

The third assertion was noted in \cite[Rem. 4.1]{Re91}.
\end{proof}

\begin{remark}\label{transitive}
\normalfont
Let $\Gc$ be a locally compact groupoid with a Haar system. 
Fix any $x\in\Gc^{(0)}$ for which the corresponding $\Gc$-orbit~$U:=\Gc\cdot x\subseteq\Gc^{(0)}$ is a locally closed set. 
Then there exist a positive measure $\mu$ on $U$ and a $*$-isomorphism 
$C^*(\Gc_U)\simeq C^*(\Gc(x))\otimes \Kc(L^2(U,\mu))$. 
(See \cite[Th. 3.1]{MRW87} and \cite[Th. 7]{Bu03}.) 
\end{remark}

Now we can draw the following corollary of Proposition~\ref{Renault_page101}.

\begin{corollary}
If $\Gc$ is a locally compact groupoid with a left Haar system 
for which there exists an open principal orbit $U\subseteq \Gc^{(0)}$, 
then there exists a positive measure $\mu$ on $U$ such that $C^*(\Gc)$ has a closed ideal isomorphic to 
$\Kc(L^2(U,\mu))$ and there exists a short exact sequence 
$$0\to \Kc(L^2(U,\mu))\to C^*(\Gc)\mathop{\longrightarrow}\limits^{\Rc_Y} C^*(\Gc_F)\to 0$$ 
where $F=\Gc^{(0)}\setminus U$.
\end{corollary}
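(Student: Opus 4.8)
The plan is to deduce this directly from Proposition~\ref{Renault_page101} together with Remark~\ref{transitive}, applied to the open invariant subset $U=\Gc\cdot x$. First I would observe that an open orbit is in particular a locally closed $\Gc$-invariant subset of $\Gc^{(0)}$, so Remark~\ref{transitive} applies and gives a positive measure $\mu$ on $U$ and a $*$-isomorphism $C^*(\Gc_U)\simeq C^*(\Gc(x))\otimes\Kc(L^2(U,\mu))$. Since $U$ is assumed \emph{principal}, the isotropy group $\Gc(x)$ is trivial, so $C^*(\Gc(x))\simeq\CC$ and hence $C^*(\Gc_U)\simeq\Kc(L^2(U,\mu))$.

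Next I would invoke part~\eqref{Renault_page101_item1} of Proposition~\ref{Renault_page101} with this open invariant set $U$ and its closed complement $F=\Gc^{(0)}\setminus U$. That proposition furnishes a short exact sequence
$$0\to I(U)\to C^*(\Gc)\mathop{\longrightarrow}\limits^{\Rc_F} C^*(\Gc_F)\to 0$$
together with a natural $*$-isomorphism $I(U)\simeq C^*(\Gc_U)$. Combining this with the identification $C^*(\Gc_U)\simeq\Kc(L^2(U,\mu))$ from the previous step, the ideal $I(U)$ becomes a closed two-sided ideal of $C^*(\Gc)$ isomorphic to $\Kc(L^2(U,\mu))$, and substituting into the exact sequence yields exactly the asserted
$$0\to \Kc(L^2(U,\mu))\to C^*(\Gc)\mathop{\longrightarrow}\limits^{\Rc_F} C^*(\Gc_F)\to 0.$$
(The arrow labelled $\Rc_Y$ in the statement is the restriction homomorphism $\Rc_F$ associated with the closed set $F$.)

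There is essentially no obstacle here: the corollary is a formal consequence of two already-established facts, once one notices that ``open orbit'' supplies the local closedness needed for Remark~\ref{transitive} and that ``principal'' collapses the tensor factor $C^*(\Gc(x))$ to the scalars. The only point deserving a line of care is checking that the $*$-isomorphism $I(U)\simeq C^*(\Gc_U)$ provided by Proposition~\ref{Renault_page101}\eqref{Renault_page101_item1} is compatible with the isomorphism of Remark~\ref{transitive}, so that the composite is a genuine $*$-isomorphism of $I(U)$ onto $\Kc(L^2(U,\mu))$; this is immediate since both are natural identifications of the same reduced groupoid $C^*$-algebra $C^*(\Gc_U)$.
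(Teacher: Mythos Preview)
Your proof is correct and follows essentially the same approach as the paper: apply Remark~\ref{transitive} to the open (hence locally closed) orbit $U$ with trivial isotropy to obtain $C^*(\Gc_U)\simeq\Kc(L^2(U,\mu))$, and then feed this into the short exact sequence of Proposition~\ref{Renault_page101}\eqref{Renault_page101_item1}. The paper's version is slightly terser (it simply observes that $\Gc_U$ is a pair groupoid), but the logical content is identical.
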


\begin{proof}
We first note that the hypothesis that $U$ is a principal orbit is equivalent to 
the fact that $\Gc_U$ is a pair groupoid, and in particular $\Gc(x)=\{\1\}$ for any $x\in U$. 
Then Remark~\ref{transitive} implies $C^*(\Gc_U)\simeq \Kc(L^2(U,\mu))$ for a suitable measure $\mu$ on $U$. 
Now the conclusion follows by Proposition~\ref{Renault_page101}\eqref{Renault_page101_item1}. 
\end{proof}

\subsection{Open dense orbits} 

We now note a property of the ideals $I(U)$ from Proposition~\ref{Renault_page101}.
This is a very special instance of \cite[Th.~6.1]{CAR13}, with a stronger conclusion. 
Our result is also closely related to the interesting characterization of \'etale groupoids 
having a simple $C^*$-algebra, given in \cite[Th.~5.1]{BCFS14}. 
Here we denote by $\Kc$ the $C^*$-algebra of compact operators on a suitable separable complex Hilbert space. 

\begin{proposition}\label{fred0}
Let $\Gc\tto\Gc^{(0)}$ be any locally compact groupoid, which has  a Haar system, 
and whose orbits are locally closed.
 Then $C^*(\Gc) \simeq \Kc$ if and only if $\Gc$ is a pair groupoid. 
\end{proposition}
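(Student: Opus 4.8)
The plan is to prove both implications, with the substantive content being the ``only if'' direction. For the easy direction, suppose $\Gc$ is a pair groupoid on a space $X=\Gc^{(0)}$. Then $\Gc$ is transitive and principal, so by Remark~\ref{transitive} (applied to the unique orbit $U=X$, which is trivially locally closed) there is a positive measure $\mu$ on $X$ with $C^*(\Gc)\simeq C^*(\{\1\})\otimes\Kc(L^2(X,\mu))\simeq\Kc(L^2(X,\mu))$, and the latter is $\simeq\Kc$ since $\Gc$ is second countable, hence $X$ is second countable and $L^2(X,\mu)$ is separable.

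For the converse, assume $C^*(\Gc)\simeq\Kc$. First I would observe that $C^*(\Gc)$ is then simple, so it has no nontrivial closed two-sided ideals; by Proposition~\ref{Renault_page101}\eqref{Renault_page101_item1} the only $\Gc$-invariant open subsets of $\Gc^{(0)}$ are $\emptyset$ and $\Gc^{(0)}$ itself. Next, since the orbits are locally closed, each orbit is open in its closure, so the orbit closures are honest closed invariant sets; I would argue that the orbit space is then, so to speak, ``topologically irreducible,'' and in particular pin down that there is a \emph{dense} orbit $U=\Gc\cdot x$: if every orbit closure were a proper closed invariant set, its complement would be a nonempty proper open invariant set, contradiction. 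Actually the cleanest route is: take any orbit $U$; it is locally closed, hence open in $\overline U$, hence $U$ is itself locally closed and (being an orbit) invariant, so $U$ is open in $\Gc^{(0)}$ iff $\overline U=\Gc^{(0)}$; and some orbit must be open — here I would invoke that a locally compact (even Baire) second countable space that is a union of locally closed invariant pieces must have one of them open — so that open orbit $U$ satisfies $\overline U=\Gc^{(0)}$ by the simplicity/no-proper-invariant-open argument, giving $U=\Gc^{(0)}$, i.e. $\Gc$ is transitive.

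Once $\Gc$ is transitive, Remark~\ref{transitive} gives $C^*(\Gc)\simeq C^*(\Gc(x))\otimes\Kc(L^2(U,\mu))$ for the (now unique) orbit $U=\Gc^{(0)}$ and its isotropy group $H:=\Gc(x)$. Combining with the hypothesis $C^*(\Gc)\simeq\Kc$ and tensoring down (e.g. quotienting by $\Kc$ on the second tensor factor, or using that $\Kc$ has no nontrivial tensor-splitting), I would conclude $C^*(H)\simeq\CC$. Since $H$ is a locally compact group, $C^*(H)\simeq\CC$ forces $H$ to be trivial: a nontrivial locally compact group has at least two inequivalent irreducible unitary representations (the trivial one and another, since either $H$ is non-compact and $C^*(H)$ is non-unital, or $H$ is compact nontrivial and has a nontrivial irreducible), so $\dim C^*(H)=1$ only when $H=\{\1\}$. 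Hence $\Gc$ is transitive with trivial isotropy, i.e. a pair groupoid.

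\textbf{Main obstacle.} The delicate point is the existence of an open orbit: I am using that the second countable locally compact Hausdorff space $\Gc^{(0)}$, written as a disjoint union of its orbits — each of which is locally closed — must contain an orbit that is open. This is where a Baire-category or transfinite-decomposition argument is needed (enumerate a countable basis, or note that a nonempty locally compact space is not meager in itself and a countable union of locally closed invariant sets forces one to have nonempty interior, which, being invariant, must then contain the whole orbit). I would make sure this step is airtight, since everything else is a routine application of Proposition~\ref{Renault_page101} and Remark~\ref{transitive} together with the structure theory of $C^*$-algebras of locally compact groups.
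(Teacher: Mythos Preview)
Your proposal is correct in outline, but the step you flag as the ``main obstacle'' is a red herring, and your Baire-category route to it is both unnecessary and potentially gappy (there may be uncountably many orbits, so a naive Baire argument does not apply). In fact you already have all the pieces for a direct argument; you just need to reorder them. Take \emph{any} nonempty orbit $U$. You already observed that $\overline U$ is a closed invariant set whose complement is open invariant, hence empty by simplicity; thus $\overline U=\Gc^{(0)}$. Now use the local-closedness hypothesis: $U$ is open in $\overline U=\Gc^{(0)}$, so $U$ is open in $\Gc^{(0)}$. Finally apply simplicity once more: $U$ is a nonempty open invariant set, hence $U=\Gc^{(0)}$. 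So $\Gc$ is transitive, with no Baire input at all.

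The paper reaches transitivity by a different (and slightly slicker) route: it invokes \cite[Th.~2.1]{Ra90} to the effect that locally closed orbits force the orbit space $\Gc^{(0)}/\Gc$ to be $T_0$, and then observes that if there were two distinct orbits, the $T_0$ property would produce an open set in the orbit space separating them, whose preimage is a nonempty proper open invariant subset of $\Gc^{(0)}$, contradicting simplicity via Proposition~\ref{Renault_page101}\eqref{Renault_page101_item1}. Your argument is more elementary in that it avoids the Ramsay dichotomy, using only the definition of ``locally closed'' together with simplicity twice.

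For the isotropy step your ``tensoring down'' is a bit vague; the cleanest phrasing (which is essentially the paper's) is: if $H:=\Gc(x)\neq\{\1\}$, the Gelfand--Raikov theorem supplies an irreducible unitary representation $\pi$ of $H$ not equivalent to the trivial one $\tau$, and then $\pi\otimes\id$ and $\tau\otimes\id$ are two inequivalent irreducible representations of $C^*(H)\otimes\Kc\simeq C^*(\Gc)$, contradicting $C^*(\Gc)\simeq\Kc$.
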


\begin{proof}
If $\Gc$ is a pair groupoid, then clearly  $C^*(\Gc) \simeq \Kc$ by Remark~\ref{transitive}.

Assume now that  $C^*(\Gc) \simeq \Kc$. 
We first prove that $\Gc$ is transitive. 
The orbit space $\Gc^{(0)}/\Gc$ is a $T_0$ space by \cite[Th. 2.1($(4)\Leftrightarrow(5)$)]{Ra90}. 
If there exist two distinct points $\Oc_1$ and $\Oc_2$ in 
$\Gc^{(0)}/\Gc$, then by the $T_0$ property, 
we may assume that there is an open neighbourhood  $V\subset \Gc^{(0)}/\Gc$ of $\Oc_1$ with $\Oc_2 \not\in V$. 
Denote by $q\colon \Gc^{(0)} \to \Gc^{(0)}/\Gc$ the quotient map. 
Then $U=q^{-1}(V)$ is a non-empty open subset on $\Gc^{(0)}$, different from $\Gc^{(0)}$,  and $\Gc$-invariant. 
It follows, by the bijective correspondence in Proposition~\ref{Renault_page101} \eqref{Renault_page101_item1}, 
that $I(U)$ is a nontrivial closed ideal of $C^*(\Gc)$. 
This is impossible since $C^*(\Gc) \simeq \Kc$. 
Therefore $\Gc$ is transitive. 

By Remark~\ref{transitive} we have now that, for any $x \in \Gc^{(0)}$, 
$C^*(\Gc) \simeq C^*(\Gc (x)) \otimes \Kc$. 
If $\Gc (x)\ne \{1\}$, then by the  Gelfand-Raikov Theorem there is 
an irreducible representation $\pi$ of $\Gc(x)$
different from the trivial representation $\tau$.
Thus $\pi\otimes \text{id}$ and $\tau\otimes \text{id}$ are two non-equivalent irreducible representations of 
$C^*(\Gc(x))\otimes\Kc$, which is a contradiction with the assumption $C^*(\Gc) \simeq \Kc$.  
Hence we must have $\Gc(x) =\{1\}$, and this finishes the proof. 
\end{proof}

\begin{lemma}\label{dense1}
If $\Gc\tto \Gc^{(0)}$  is any locally compact groupoid 
with a left Haar system.  
 Assume that for  $x_0\in \Gc^{(0)}$
the regular representation 
$\Lambda_{x_0}\colon C^*(\Gc) \to \Lc(L^2(\Gc_{x_0}))$ is injective. 
Then the $\Gc$-orbit of $x_0$ is dense in $\Gc^{(0)}$. 
\end{lemma}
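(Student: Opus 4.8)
The statement to prove is the contrapositive-friendly claim: if the regular representation $\Lambda_{x_0}$ is injective on $C^*(\Gc)$, then $\Gc\cdot x_0$ is dense in $\Gc^{(0)}$. The plan is to argue by contraposition. Suppose $\overline{\Gc\cdot x_0}\ne\Gc^{(0)}$. The orbit closure $\overline{\Gc\cdot x_0}$ is a closed $\Gc$-invariant subset, so its complement $U:=\Gc^{(0)}\setminus\overline{\Gc\cdot x_0}$ is a nonempty open $\Gc$-invariant subset, and $x_0\notin U$, i.e.\ $x_0\in F:=\overline{\Gc\cdot x_0}$. By Proposition~\ref{Renault_page101}\eqref{Renault_page101_item1} we get a short exact sequence
\begin{equation*}
0\to I(U)\to C^*(\Gc)\xrightarrow{\ \Rc_F\ } C^*(\Gc_F)\to 0
\end{equation*}
with $I(U)\simeq C^*(\Gc_U)\ne 0$. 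So it suffices to show that $\Lambda_{x_0}$ kills the nonzero ideal $I(U)=\Ker\Rc_F$, which would contradict injectivity.

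**Key step.** The heart of the matter is that the regular representation at $x_0$ only "sees" the groupoid near $x_0$, and since $x_0\in F$ with $F$ closed and invariant, it factors through the restriction map $\Rc_F$. Concretely, for $\varphi\in\Cc_c(\Gc)$ the operator $\Lambda_{x_0}(\varphi)$ acts on $L^2(\Gc_{x_0},\lambda_{x_0})$ by convolution, and $\lambda_{x_0}$ is supported on $\Gc_{x_0}=d^{-1}(x_0)\subseteq d^{-1}(F)=\Gc_F$ (using $d(g)=x_0\in F$ forces $r(g)\in\Gc\cdot x_0\subseteq F$, so indeed $g\in\Gc_F$). Hence for $\psi$ supported on $\Gc_{x_0}$, the convolution $(\varphi\ast\psi)(g)=\int\varphi(h)\psi(h^{-1}g)\,\de\lambda^{r(g)}(h)$ at a point $g\in\Gc_{x_0}$ only involves values $\varphi(h)$ with $h\in\Gc^{r(g)}$ and $h^{-1}g\in\Gc_{x_0}$, forcing $r(h)=r(g)\in F$ and $d(h)=r(h^{-1}g)\in F$, so $h\in\Gc_F$. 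Therefore $\Lambda_{x_0}(\varphi)$ depends only on $\varphi\vert_{\Gc_F}$, i.e.\ $\Lambda_{x_0}=\widetilde\Lambda_{x_0}\circ\Rc_F$ for the analogous regular representation $\widetilde\Lambda_{x_0}$ of $C^*(\Gc_F)$ (note $x_0$ is a unit of $\Gc_F$, and $\lambda_{x_0}$ restricts to the corresponding measure for the Haar system $\lambda_F$ of $\Gc_F$). Consequently $\Ker\Rc_F\subseteq\Ker\Lambda_{x_0}$, and since $\Ker\Rc_F=I(U)\ne 0$ we conclude $\Lambda_{x_0}$ is not injective, completing the contrapositive.

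**Main obstacle.** The one point that needs a little care — and which I expect to be the main technical obstacle — is the clean identification $\Lambda_{x_0}=\widetilde\Lambda_{x_0}\circ\Rc_F$ at the $C^*$-level rather than just on $\Cc_c$. On dense subalgebras the factorization is the elementary support computation above; extending it requires knowing that $\Rc_F\colon C^*(\Gc)\to C^*(\Gc_F)$ is the continuous extension of the restriction map (which is exactly how it is set up in Remark~\ref{rem2.1}, using that $F$ closed makes $\Gc_F\subseteq\Gc$ closed and $\lambda_F$ the restricted Haar system) and that $\widetilde\Lambda_{x_0}$ is a genuine $*$-representation of $C^*(\Gc_F)$, so that the identity of bounded operators valid on the dense subalgebra $\Cc_c(\Gc)$ propagates by continuity. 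Granting that, the argument closes immediately. An alternative, perhaps slicker, route avoiding any explicit support bookkeeping: the representation $\Lambda_{x_0}$ is a direct integral over the orbit $\Gc\cdot x_0$ of regular representations associated to points of that orbit (all unitarily equivalent), hence is weakly contained in, and its kernel contains the kernel of, the regular representation of the reduced groupoid $\Gc_{\overline{\Gc\cdot x_0}}$; in either formulation the outcome is that $\Lambda_{x_0}$ annihilates $I(U)$.
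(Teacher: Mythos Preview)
Your proof is correct. The paper's argument is more direct and slightly stronger: rather than invoking the ideal $I(U)$ from Proposition~\ref{Renault_page101}, it simply picks (via Urysohn) a nonzero $\varphi\in\Cc_c(\Gc)$ supported in $r^{-1}(U)$ for some open set $U$ disjoint from $\Gc\cdot x_0$, and checks by the same convolution computation you perform that $\Lambda_{x_0}(\varphi)=0$. This avoids the short exact sequence entirely and yields the stronger conclusion that already injectivity of $\Lambda_{x_0}\vert_{\Cc_c(\Gc)}$ forces density of the orbit. Your route, by contrast, identifies the whole ideal $\Ker\Rc_F\subseteq\Ker\Lambda_{x_0}$ at the $C^*$-level, which is a cleaner structural statement (and is precisely what the paper records immediately afterwards as Proposition~\ref{dense2}\eqref{dense2_item1}); the cost is the continuity/extension step you flag, plus the observation that orbit closures are $\Gc$-invariant. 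The core support calculation is the same in both arguments.
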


\begin{proof}
We will actually prove a stronger fact, 
namely that if the regular representation $\Lambda_{x_0}\vert_{\Cc_c(\Gc)}$ is injective, 
then the orbit $\Gc\cdot x_0$ is dense in~$\Gc^{(0)}$. 
To this end we assume that there exists 
an open nonempty set $U\subseteq \Gc^{(0)}$ with $U\cap\Gc\cdot x_0=\emptyset$, 
and we will show that this leads to a contradiction. 
Specifically, since $U$ is open and nonempty, then $V:=r^{-1}(U)$ ($\supseteq U$) is an open nonempty 
subset of $\Gc$. 
Since $\Gc$ is locally compact, it then easily follows by Urysohn's lemma that there exists 
$\varphi\in\Cc_c(\Gc)\setminus\{0\}$ with $\supp\varphi\subseteq V$. 
In particular, $\varphi(k)=0$ if $k\in\Gc\setminus V=r^{-1}(\Gc^{(0)}\setminus U)\supseteq r^{-1}(\Gc\cdot x_0)$. 
Then for every $\psi\in\Cc_c(\Gc_{x_0})$ and $g\in\Gc_{x_0}$ we have 
 $$(\Lambda_{x_0}(\varphi)\psi)(g)=\int\limits_{\Gc_{x_0}}\varphi(h)\psi(h^{-1}g) d h=0 $$
 because here we have $r(h)=r(g)\in r(\Gc_{x_0})=\Gc\cdot x_0$, hence $\varphi(h)=0$. 
Since $\Cc_c(\Gc_{x_0})$ is dense in $L^2(\Gc_{x_0})$, it then follows $\varphi\in\Ker(\Lambda_{x_0}\vert_{\Cc_c(\Gc)})\setminus\{0\}$, 
which is a contradiction with our assumption.  
\end{proof}

\begin{proposition}\label{dense2}
Let $\Gc\tto \Gc^{(0)}$ be any locally compact groupoid 
with a left Haar system. 
If $U\subseteq \Gc^{(0)}$ is any open $\Gc$-invariant set and $x_0\in U$, then the following assertions hold: 
\begin{enumerate}[(i)]
\item\label{dense2_item1} 
For every $x\in \Gc^{(0)}\setminus U$ 
the ideal $C^*(\Gc_U)$ of $C^*(\Gc)$ is contained in the kernel of the regular representation 
$\Lambda_x\colon C^*(\Gc)\to\Lc(L^2(\Gc_x))$, 
and similarly,  for the reduced $C^*$-algebras, the ideal $C^*_{\rm red}(\Gc_U)$ of $C^*_{\rm red}(\Gc)$ 
is contained in the kernel of the regular representation 
$\Lambda_x\colon C^*_{\rm red}(\Gc)\to\Lc(L^2(\Gc_x))$. 
\item\label{dense2_item2} 
If $U$ is an orbit of $\Gc$, then the regular representation 
$\Lambda_{x_0}$  
is faithful if and only if $U$ is dense in~$\Gc^{(0)}$.  
\end{enumerate}
\end{proposition}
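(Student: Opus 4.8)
The plan is to deduce both assertions from the structural results already in place, chiefly Proposition~\ref{Renault_page101}\eqref{Renault_page101_item1}, Lemma~\ref{dense1}, and the short exact sequences relating $C^*(\Gc)$ to its reductions. For part~\eqref{dense2_item1}, fix $x\in\Gc^{(0)}\setminus U$ and set $F:=\Gc^{(0)}\setminus U$, a closed $\Gc$-invariant set containing $x$. The regular representation $\Lambda_x$ is built from the convolution action of $\Cc_c(\Gc)$ on $L^2(\Gc_x,\lambda_x)$, and since $\Gc_x=d^{-1}(x)\subseteq\Gc_F=d^{-1}(F)$ (using $F$-invariance, so $r(\Gc_x)=\Gc\cdot x\subseteq F$), the action of $\varphi\in\Cc_c(\Gc)$ on $L^2(\Gc_x)$ depends only on the restriction $\varphi\vert_{\Gc_F}=\Rc_F(\varphi)$. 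Concretely, for $\varphi$ supported in $\Gc_U$ and $\psi\in\Cc_c(\Gc_x)$, the integrand $\varphi(h)\psi(h^{-1}g)$ in $(\Lambda_x(\varphi)\psi)(g)$ vanishes because $r(h)=r(g)\in\Gc\cdot x\subseteq F$ forces $h\in\Gc_F$ and hence $\varphi(h)=0$; this is the same computation as in the proof of Lemma~\ref{dense1}. Thus $\Lambda_x$ factors through $\Rc_F$, and since $C^*(\Gc_U)\simeq I(U)=\Ker\Rc_F$ by Proposition~\ref{Renault_page101}\eqref{Renault_page101_item1}, we get $C^*(\Gc_U)\subseteq\Ker\Lambda_x$. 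The reduced statement follows the same way using that $\Lambda_x$ descends to $C^*_{\rm red}(\Gc)$ and that $C^*_{\rm red}(\Gc_U)$ is the corresponding ideal (here one may invoke Proposition~\ref{Renault_page101}\eqref{Renault_page101_item3} or argue directly on $\Cc_c$).

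For part~\eqref{dense2_item2}, assume $U$ is itself an orbit of $\Gc$, so $U=\Gc\cdot x_0$. If $U$ is not dense, then $F=\Gc^{(0)}\setminus U$ is a nonempty closed invariant set, and by part~\eqref{dense2_item1} (applied at any $x\in F$) we have $C^*(\Gc_U)\subseteq\Ker\Lambda_x$; but this does not directly obstruct $\Lambda_{x_0}$. Instead the cleanest route is: if $\Lambda_{x_0}$ is faithful then $\Lambda_{x_0}\vert_{\Cc_c(\Gc)}$ is injective, and Lemma~\ref{dense1} immediately gives that $\Gc\cdot x_0=U$ is dense. Conversely, suppose $U$ is dense. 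Then $F$ has empty interior, so there is no nonempty open invariant set disjoint from $U$, and we want $\Ker\Lambda_{x_0}=0$. The idea is that $\Ker\Lambda_{x_0}$ is a closed two-sided ideal of $C^*(\Gc)$; I would show it is contained in $I(U)^{\perp}$-type position, or more directly, that $\Lambda_{x_0}$ restricted to the ideal $I(U)\simeq C^*(\Gc_U)$ is faithful.

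The key point for the converse is that $\Gc_U$ is a \emph{transitive} groupoid (its unit space $U$ is a single orbit), so by Remark~\ref{transitive} $C^*(\Gc_U)\simeq C^*(\Gc(x_0))\otimes\Kc(L^2(U,\mu))$, and on this ideal the regular representation $\Lambda_{x_0}$ of $\Gc$ restricts to (a representation unitarily equivalent to) the regular representation $\Lambda_{x_0}$ of $\Gc_U$ — which is faithful on $C^*_{\rm red}(\Gc_U)$ and, for a transitive groupoid, even on $C^*(\Gc_U)$ since transitive groupoids are amenable. Hence $\Ker\Lambda_{x_0}\cap I(U)=0$. Finally, any closed ideal $J$ of $C^*(\Gc)$ with $J\cap I(U)=0$ must itself vanish when $I(U)$ is an \emph{essential} ideal; and $I(U)$ is essential precisely because $U$ is dense (a nonzero ideal $J$ with $J\cap I(U)=0$ would, via the correspondence in Proposition~\ref{Renault_page101}\eqref{Renault_page101_item1} applied to the ideal generated by $J$, produce a nonempty open invariant set disjoint from $U$, contradicting density). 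Therefore $\Ker\Lambda_{x_0}=0$ and $\Lambda_{x_0}$ is faithful.

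The main obstacle I expect is the converse direction of \eqref{dense2_item2}, specifically justifying that $\Lambda_{x_0}$ is faithful on the ideal $I(U)\simeq C^*(\Gc_U)$ and that this ideal is essential. The first requires identifying the restriction of the ambient regular representation with the regular representation of the reduced groupoid $\Gc_U$ and then using amenability (transitivity) of $\Gc_U$ to pass from $C^*_{\rm red}(\Gc_U)$ to $C^*(\Gc_U)$; the second requires translating ``$U$ dense'' into ``$I(U)$ essential'' through the ideal/invariant-open-set correspondence of Proposition~\ref{Renault_page101}, being careful that intersections of ideals correspond to intersections of open invariant sets. Both steps are standard in the groupoid literature but need to be spelled out with the correct measure-theoretic bookkeeping for the $L^2(\Gc_{x_0})$-spaces.
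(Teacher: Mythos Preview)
Your treatment of Assertion~\eqref{dense2_item1} and of the forward implication in Assertion~\eqref{dense2_item2} is correct and matches the paper's: both are read off the computation in the proof of Lemma~\ref{dense1}.

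For the converse in \eqref{dense2_item2}, however, your route differs from the paper's and contains two genuine gaps. First, the claim that ``transitive groupoids are amenable'' is false in general: a transitive locally compact groupoid is equivalent to its isotropy group $\Gc(x_0)$, and it is (metrically) amenable precisely when $\Gc(x_0)$ is amenable. So you cannot pass from faithfulness of $\Lambda_{x_0}$ on $C^*_{\rm red}(\Gc_U)$ to faithfulness on $C^*(\Gc_U)$ without that extra hypothesis. Second, your essentiality argument for $I(U)$ is not justified by Proposition~\ref{Renault_page101}\eqref{Renault_page101_item1}: that correspondence associates ideals to open invariant sets, but it is explicitly a bijection onto \emph{some} closed two-sided ideals, not all of them. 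Hence a nonzero ideal $J$ with $J\cap I(U)=0$ need not correspond to any open invariant set, and you cannot manufacture an open invariant set disjoint from $U$ this way.

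The paper's argument for the converse avoids both issues and works directly at the reduced level. One picks a countable dense sequence $x_1,x_2,\dots$ in $U$ (second countability), forms the probability measure $\nu:=\sum_{n\ge1}2^{-n}\delta_{x_n}$ on $\Gc^{(0)}$, and invokes \cite[Cor.~2.4]{KS02}: since $\nu$ has support all of $\Gc^{(0)}$ (because $U$ is dense), the representation $\Ind_\nu=\int^\oplus_{\Gc^{(0)}}\Lambda_x\,\de\nu(x)$ is faithful on $C^*_{\rm red}(\Gc)$. But every $\Lambda_{x_n}$ is unitarily equivalent to $\Lambda_{x_0}$ because $x_n\in U=\Gc\cdot x_0$, so $\Ker\Lambda_{x_0}=\Ker(\Ind_\nu)=\{0\}$ in $C^*_{\rm red}(\Gc)$. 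This yields exactly the reduced statement; the paper does not claim faithfulness on the full $C^*$-algebra, and indeed that stronger statement would require an amenability hypothesis you do not have.
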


\begin{proof}
Assertion~\eqref{dense2_item1}  directly follows as a byproduct of 
the proof of Lemma~\ref{dense1}, where we actually checked that if $U\cap\Gc\cdot x_0=\emptyset$ and 
$\supp\varphi\subseteq r^{-1}(U)=\emptyset$ then $\varphi\in\Ker\Lambda_{x_0}$.

For Assertion~\eqref{dense2_item2}, first note that since $\Gc_U$ is transitive and $x_0\in U$, 
it follows that $U=\Gc.x_0$. 
Now, if the representation $\Lambda_{x_0}$ is faithful, 
then the set $U$ is dense again as a consequence of Lemma~\ref{dense1}. 
 Conversely, assume that $U$ is dense in $M$. 
Since the topology of $U$ is second countable, 
we may select any sequence of points $x_1,x_2,\dots\in U$ which is dense in $U$. 
The corresponding infinite convex combination of Dirac measures $\nu:=\sum\limits_{n\ge1}\frac{1}{2^n}\delta_{x_n}$ 
is a measure on $\Gc^{(0)}$ with dense support, hence by \cite[Cor.~2.4]{KS02} 
the representation $\Ind_\nu:={\int\limits_{\Gc^{(0)}}}^\oplus\Lambda_x\de\nu(x)$ of $C^*_{\rm red}(\Gc)$ is faithful. 
But for every $n\ge1$ the representation $\Lambda_{x_n}$ is unitarily equivalent to $\Lambda_{x_0}$ 
because $x_n\in U=\Gc.x_0$, hence it follows that $\Ker\Lambda_{x_0}=\Ker(\Ind_\nu)=\{0\}$, 
and this completes the proof. 
\end{proof}

\section{Group bundles}\label{Sect3}

In this section we study group bundles, a special type of groupoids  
that will turn out to play an important role in our investigation of transformation groups 
in the next section (see for instance Corollary~\ref{Morita4}). 

\begin{definition}
\normalfont
A \emph{group bundle} is a locally compact groupoid 
whose range and domain maps are equal. 
\end{definition} 

\begin{remark}\label{Re91_Lemma1.3}
\normalfont 
We recall that by definition the range and domain maps of any locally compact groupoid are assumed to be open maps. 
Then, as a direct consequence of \cite[Lemma 1.3]{Re91}, any group bundle has a left Haar system.  
\end{remark}

We record the following result for further reference although it 
is known and can be traced back to \cite[Lemma 1.1A]{Gl62}. 

\begin{lemma}\label{1Cstar_isotrop}
Let $p\colon \Tc\to S$ be a group bundle with a fixed Haar system 
and denote $\Tc_s:=p^{-1}(s)$ for every $s\in S$.
The following assertions hold: 
\begin{enumerate}[(i)]
\item\label{1Cstar_isotrop_item1}
The $C^*$-algebra $C^*(\Tc)$ is a $\Cc_0(S)$-algebra 
and is $\Cc_0(S)$-linearly $*$-iso\-morphic   
to the algebra of sections of an upper semicontinuous $C^*$-bundle over~$ S$ 
whose fiber over any $s\in S$ is $C^*(\Tc_s)$. 
\item\label{1Cstar_isotrop_item2}  
One has a partition into closed subsets 
$\widehat{C^*(\Tc)}=\bigsqcup\limits_{s\in S}\widehat{C^*(\Tc_s)}$
and there is a continuous map $\pi\colon \widehat{C^*(\Tc)}\to S$ 
with $\pi^{-1}(s)=\widehat{C^*(\Tc_s)}$ for every $s\in S$.
\end{enumerate}
\end{lemma}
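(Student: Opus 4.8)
The two assertions are really a single statement about the structure theory of $C^*$-algebras of group bundles, and the plan is to reduce everything to the standard theory of $\Cc_0(S)$-algebras. First I would recall that a group bundle $p\colon\Tc\to S$ has, by construction, range equal to domain, so its unit space is $S=\Tc^{(0)}$ and the orbit space $\Tc^{(0)}/\Tc$ is just $S$ itself (every orbit is a single point). By Remark~\ref{Re91_Lemma1.3} a Haar system exists, and such a system is automatically a continuous family $\{\lambda^s\}_{s\in S}$ of Haar measures on the groups $\Tc_s$. The first step is then to produce the $\Cc_0(S)$-module structure on $C^*(\Tc)$: the pullback $p^*\colon\Cc_0(S)\to\Cc_b(\Tc)$ sends $\Cc_0(S)$ into the center of the multiplier algebra $M(C^*(\Tc))$, because multiplication by a function of $r(g)=d(g)$ commutes with the convolution product and is compatible with the involution. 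Nondegeneracy of this action follows from the existence of an approximate unit in $\Cc_c(\Tc)$ supported over arbitrarily large compact subsets of $S$. This makes $C^*(\Tc)$ into a $\Cc_0(S)$-algebra.

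\textbf{Realizing it as sections of a bundle.} Once we have a $\Cc_0(S)$-algebra, the standard structure theorem (due in this generality to work building on the Dauns--Hofmann theorem; see e.g. the references the paper already cites around Lemma~\ref{1Cstar_isotrop}) says it is $\Cc_0(S)$-linearly $*$-isomorphic to the section algebra $\Gamma_0(S,\mathcal{A})$ of an upper semicontinuous $C^*$-bundle $\mathcal{A}$ over $S$, with fiber over $s$ equal to the quotient $C^*(\Tc)/(I_s\cdot C^*(\Tc))$, where $I_s=\{f\in\Cc_0(S): f(s)=0\}$. So the real content of item~\eqref{1Cstar_isotrop_item1} is the identification of this fiber with $C^*(\Tc_s)$. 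For this I would use that $\Tc_s=p^{-1}(s)$ is a closed subset of $\Tc$ when $\{s\}$ is closed, hence $\Tc_s$ is a reduced groupoid (a locally compact group) in the sense of Remark~\ref{rem2.1}, and the restriction map $\Cc_c(\Tc)\to\Cc_c(\Tc_s)$ extends to a surjective $*$-homomorphism $C^*(\Tc)\to C^*(\Tc_s)$ whose kernel is exactly the closure of $\Cc_c(\Tc)$-functions vanishing on the fiber, i.e. $\overline{I_s\cdot C^*(\Tc)}$. The surjectivity uses that every element of $\Cc_c(\Tc_s)$ extends to $\Cc_c(\Tc)$ (Tietze, as in Remark~\ref{rem2.1}); the kernel computation uses a partition-of-unity argument over $S$ to approximate any $\varphi$ with $\varphi|_{\Tc_s}=0$ by elements of $I_s\cdot\Cc_c(\Tc)$ in the $\|\cdot\|_I$-norm, hence in the $C^*$-norm.

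\textbf{The dual space decomposition.} Assertion~\eqref{1Cstar_isotrop_item2} is then essentially formal given~\eqref{1Cstar_isotrop_item1}. For any $\Cc_0(S)$-algebra $A$ realized as sections of an upper semicontinuous bundle with fibers $A_s$, every irreducible representation of $A$ factors through a unique fiber $A_s$ (because the center $\Cc_0(S)$ acts by scalars under an irreducible representation, giving a character of $\Cc_0(S)$, i.e. a point $s\in S$), and conversely every irreducible representation of $A_s$ lifts to one of $A$; this yields the disjoint-union decomposition $\widehat{C^*(\Tc)}=\bigsqcup_{s\in S}\widehat{C^*(\Tc_s)}$ as a set, with each piece $\widehat{C^*(\Tc_s)}=\{[\pi]: \ker\pi\supseteq \overline{I_s\cdot C^*(\Tc)}\}$ closed because it is the hull of a closed ideal. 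The map $\pi\colon\widehat{C^*(\Tc)}\to S$ sending a representation to the point whose fiber it factors through is the composition $\widehat{C^*(\Tc)}\to\widehat{\Cc_0(S)}=S$ induced by the central inclusion $\Cc_0(S)\hookrightarrow M(C^*(\Tc))$, and such induced maps on spectra are continuous; alternatively continuity follows since the preimage of any open $V\subseteq S$ is the (open) hull-kernel complement of $\overline{\Cc_0(S\setminus V)\cdot C^*(\Tc)}$.

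\textbf{Main obstacle.} The routine parts are the $\Cc_0(S)$-algebra structure and the dual-space bookkeeping; the one genuinely substantive point is the identification of the fiber $C^*(\Tc)/\overline{I_s\cdot C^*(\Tc)}$ with $C^*(\Tc_s)$ — concretely, that the ideal generated by functions vanishing at $s\in S$ is dense in the kernel of the restriction map to the fiber group. This is where the compatibility of the Haar system $\{\lambda^s\}$ with the bundle projection, and an Urysohn/partition-of-unity argument in the $\|\cdot\|_I$ norm on $\Cc_c(\Tc)$, must be used carefully; it is exactly the point that goes back to \cite[Lemma 1.1A]{Gl62}, and I would either invoke that reference directly or reprove it along these lines.
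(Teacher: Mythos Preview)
Your proposal is correct and follows essentially the same route as the paper: construct the $\Cc_0(S)$-algebra structure on $C^*(\Tc)$ via the pointwise action $\Phi(f)\varphi=(f\circ p)\cdot\varphi$, then invoke the standard section-algebra theorem for $\Cc_0(S)$-algebras (the paper cites \cite[Th.~C.26]{Wi07}) for part~(i), and the corresponding statement about spectra (the paper cites \cite[Prop.~C.5]{Wi07}) for part~(ii). You spell out the fiber identification $C^*(\Tc)/\overline{I_s\cdot C^*(\Tc)}\simeq C^*(\Tc_s)$ in more detail than the paper does---the paper absorbs this into the cited references and the preceding Remark~\ref{rem2.1}---but this is exactly the content of the argument and your sketch of it (Tietze for surjectivity, partition-of-unity in the $\|\cdot\|_I$-norm for the kernel) is the standard one.
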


\begin{proof}
For Assertion~\eqref{1Cstar_isotrop_item1}, 
we need to construct a nondegenerate $*$-morphism $\Phi$ from $\Cc_0(S)$ into 
the center $ZM(C^*(\Tc))$ of the multiplier algebra $M(C^*(\Tc))$ of $C^*(\Tc)$ 
(see \cite[Def. C.1]{Wi07}). 
Here we regard $M(C^*(\Tc))$ as the $C^*$-algebra of adjointable operators on 
$C^*(\Tc)$ viewed as a right Hilbert module over itself 
with the scalar multiplication and inner product defined by $\varphi\cdot\psi:=\varphi\ast\psi$ 
and $\langle\varphi,\psi\rangle:=\varphi^*\ast\psi$. 
For all $\varphi,\psi\in\Cc_c(\Tc)$ their product in $C^*(\Tc)$ is 
the function $\varphi\ast\psi\in\Cc_c(\Tc)$ given by 
$$(\varphi\ast\psi)(g)=\int_{\Tc_s}\varphi(h)\psi(h^{-1}g)\de\lambda^{s}(h)
\text{ for }g\in\Tc_s\text{ and }s\in S.$$ 
Let $f\in\Cc_0(S)$ be arbitrary and define $\Phi_0(f)\colon \Cc_c(T)\to\Cc_c(T)$, $\Phi_0(f)\varphi:=(f\circ p)\cdot\varphi$.  
It is straightforward to check that 
$$\langle\Phi_0(f)\varphi,\psi\rangle=\langle \varphi,\Phi_0(\bar f)\psi\rangle 
\text{ and }\Phi_0(f)\varphi \ast \psi=\varphi \ast \Phi_0(f)\psi=\Phi_0(f)(\varphi\ast\psi)$$ 
which imply at once that 
$\langle\Phi_0(f)\varphi,\Phi_0(f)\varphi\rangle\le\Vert f\Vert_\infty^2\langle\varphi,\varphi\rangle$, 
hence $\Phi_0(f)$ extends to a bounded linear operator $\Phi(f)\colon C^*(\Tc)\to C^*(\Tc)$. 
Moreover $\Phi(f)\in ZM(C^*(\Tc))$ by \cite[Lemma 8.3]{Wi07}. 
One thus obtains a $*$-morphism $\Phi\colon \Cc_0(S)\to ZM(C^*(\Tc))$, 
and it is easily seen that the set $\{\Phi_0(f)\varphi\mid f\in\Cc_0(S),\varphi\in\Cc_0(\Tc)\}$ is dense in $C^*(\Tc)$. 
Hence the morphism $\Phi$ is nondegenerate, 
and it thus defines a $\Cc_0(S)$-algebra structure on $C^*(\Tc)$. 
The second part of the assertion then follows by \cite[Th. C.26]{Wi07}.  
Assertion~\eqref{1Cstar_isotrop_item2} follows by \cite[Prop. C.5]{Wi07}.  
\end{proof}

\begin{lemma}\label{bund2}
Let $p\colon \Tc\to S$ be any group bundle and $\theta\colon\Xi\to S$ be any continuous map, 
and define 
$\theta^*(\Tc):=\{(\xi,t)\in\Xi\times \Tc\mid \theta(\xi)=p(t)\}\subseteq \Xi\times \Tc$
with its relative topology. 
Then $q\colon \theta^*(\Tc)\to\Xi$, $(\xi,t)\mapsto\xi$, has the canonical structure of a group bundle. 
\end{lemma}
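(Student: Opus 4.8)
The plan is to equip $\theta^*(\Tc)$ with the obvious fibrewise group structure and then to verify that, with the relative topology, it satisfies every requirement in the definition of a group bundle from Section~\ref{Sect3}; a left Haar system will then come for free from Remark~\ref{Re91_Lemma1.3}. First I would declare both the domain and the range map of $\theta^*(\Tc)$ to be $q$, so that a pair $((\xi,t),(\xi',t'))$ is composable precisely when $\xi=\xi'$, and in that case set $(\xi,t)(\xi,t'):=(\xi,tt')$ and $(\xi,t)^{-1}:=(\xi,t^{-1})$, with the unit over $\xi$ taken to be $(\xi,1_{\theta(\xi)})$, where $1_s$ denotes the identity of the group $\Tc_s:=p^{-1}(s)$. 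Since $\theta(\xi)=p(t)=p(t')$ forces $(t,t')$ to be a composable pair of the group bundle $\Tc$ (in which $d=r=p$) with product $tt'\in\Tc_{\theta(\xi)}$, these operations are well defined, and all the groupoid axioms reduce to the group axioms inside each fibre $q^{-1}(\xi)$, which the second-coordinate projection maps bijectively onto the group $\Tc_{\theta(\xi)}$.

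Next I would settle the point-set topology. The key observation is that $\theta^*(\Tc)$ is a \emph{closed} subset of $\Xi\times\Tc$, since it is the preimage of the diagonal $\{(s,s)\mid s\in S\}$ under the continuous map $(\xi,t)\mapsto(\theta(\xi),p(t))$, and that diagonal is closed because $S$, as the unit space of a locally compact groupoid, is Hausdorff. Hence, $\Xi$ being second countable, locally compact and Hausdorff (as is tacitly assumed), so is $\Xi\times\Tc$, and therefore so is its closed subspace $\theta^*(\Tc)$. Continuity of inversion and of multiplication on $\theta^*(\Tc)$ is inherited at once from the corresponding structural maps of $\Tc$, and continuity of the unit inclusion $\Xi\to\theta^*(\Tc)$, $\xi\mapsto(\xi,1_{\theta(\xi)})$, follows from continuity of $\theta$ together with that of the unit inclusion $S\hookrightarrow\Tc$; since the restriction of $q$ to the image of this inclusion is its inverse, the inclusion is a homeomorphism onto its image.

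It remains to check that $d=r=q$ is an open map. For open sets $U\subseteq\Xi$ and $W\subseteq\Tc$ one has $q\bigl((U\times W)\cap\theta^*(\Tc)\bigr)=U\cap\theta^{-1}(p(W))$, which is open in $\Xi$ because $p$ (the range map of the locally compact groupoid $\Tc$) is open and $\theta$ is continuous; since the sets $(U\times W)\cap\theta^*(\Tc)$ form a base for the topology of $\theta^*(\Tc)$, openness of $q$ follows, and the verification that $\theta^*(\Tc)\tto\Xi$ is a group bundle is complete. Apart from routine unwinding of definitions, the only step that requires any thought is this openness of $q$, which I expect to be the main obstacle; it is precisely here that one invokes the standing hypothesis that the domain/range map of a locally compact groupoid is open.
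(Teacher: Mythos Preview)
Your proof is correct and follows essentially the same approach as the paper: both identify openness of $q$ as the only nontrivial point and verify it via the computation $q\bigl((U\times W)\cap\theta^*(\Tc)\bigr)=U\cap\theta^{-1}(p(W))$, which is open because $p$ is open and $\theta$ is continuous. Your version is simply more explicit about the routine groupoid and point-set verifications that the paper dismisses as ``clear.''
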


\begin{proof}
It is clear that the fibers of $q\colon \theta^*(\Tc)\tto\Xi$  
are locally compact groups, hence it remains to check that $q$ is an open map. 
To this end, for arbitrary open subsets $V\subseteq \Xi$ and $W\subseteq \Tc$ with $(V\times W)\cap\theta^*(p)\ne\emptyset$, 
we must prove that $q((V\times W)\cap\theta^*(p))$ is an open subset of $\Xi$. 
One has 
$$q((V\times W)\cap\theta^*(p))=\{\xi\in\Xi\mid (\exists t\in W)\ \theta(\xi)=p(t)\}=\theta^{-1}(p(W))$$
and this is an open subset of $\Xi$ since $\theta$ is continuous and $p$ is an open map. 
\end{proof}

A \emph{strong group bundle} is a group bundle $s\colon\Tc\to S$ 
with a fixed Haar system (see Remark~\ref{Re91_Lemma1.3})  
for which $C^*(\Tc)$ is $\Cc_0(S)$-linearly $*$-iso\-morphic   
to the $C^*$-algebra of sections of a continuous $C^*$-bundle $q\colon E\to S$. 
We note that by Lemma~\ref{1Cstar_isotrop}\eqref{1Cstar_isotrop_item1} there always exists such an upper semicontinuous $C^*$-bundle.

\begin{lemma}\label{bund2bis}
In the setting of Lemma~\ref{bund2},  
the following assertions hold: 
\begin{enumerate}[(i)]
\item\label{bund2bis_item1}
\begin{enumerate}
\item\label{bund2bis_item1a} 
If $p$ is a strong group bundle, then for every $\psi\in\Cc_c(\Tc)$ the function 
$\widetilde\psi\colon S\to[0,\infty)$, $\widetilde\psi(s):=\Vert\psi\vert_{\Tc_s}\Vert_{C^*(\Tc_s)}$ is continuous. 
\item\label{bund2bis_item1b} 
Conversely, if there is a $*$-subalgebra $\Fc$ of the convolution algebra $\Cc_c(\Tc)$ 
such that for every $\psi\in\Fc$ its corresponding function 
$\widetilde\psi\colon S\to[0,\infty)$ is continuous, 
and for every $s\in\Sc$ the set $\{\psi\vert_{\Tc_s}\mid \psi\in\Fc\}$ is dense in $C^*(\Tc_s)$, 
then $p\colon\Tc\to S$ is a strong group bundle. 
\end{enumerate}
\item\label{bund2bis_item2}
If  $p\colon\Tc\to S$ is a strong group bundle, 
then also $q\colon \theta^*(\Tc)\to\Xi$ is a strong group bundle and one has 
$\Cc_0(\Xi)\otimes_{\Cc_0(S)} C^*(\Tc)\simeq C^*(\theta^*(\Tc))$ 
via a $\Cc_0(\Xi)$-linear isometric $*$-isomorphism  induced by the bilinear map 
$\Cc_c(\Xi)\times \Cc_c(\Tc)\to \Cc_c(\theta^*(\Tc))$, 
$(f,\psi)\mapsto(f\otimes \psi)\vert_{\theta^*(\Tc)}$. 
\end{enumerate}
\end{lemma}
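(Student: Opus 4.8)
The plan is to treat the three assertions in the order \eqref{bund2bis_item1a}, \eqref{bund2bis_item1b}, \eqref{bund2bis_item2}, since the last one will be deduced by combining \eqref{bund2bis_item1b} with a direct construction of the relevant $*$-subalgebra. For \eqref{bund2bis_item1a}, the hypothesis that $p$ is a \emph{strong} group bundle means, by definition, that $C^*(\Tc)\simeq \Gamma_0(S,E)$ for a \emph{continuous} $C^*$-bundle $q\colon E\to S$, via a $\Cc_0(S)$-linear $*$-isomorphism. Under this identification, for $\psi\in\Cc_c(\Tc)\subseteq C^*(\Tc)$, the evaluation at $s\in S$ of the corresponding section is exactly the image of $\psi$ in the fiber $E_s\simeq C^*(\Tc_s)$, i.e.\ $\psi\vert_{\Tc_s}$, so $\widetilde\psi(s)=\Vert\psi\vert_{\Tc_s}\Vert_{C^*(\Tc_s)}$ is the norm of a continuous section evaluated at $s$; continuity of $s\mapsto\Vert(\text{section})(s)\Vert$ is one of the defining axioms of a continuous $C^*$-bundle (for an upper semicontinuous bundle one only gets upper semicontinuity — this is precisely the distinction that makes ``strong'' the right hypothesis). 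So \eqref{bund2bis_item1a} is essentially immediate from unwinding the definitions.

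For \eqref{bund2bis_item1b}, by Lemma~\ref{1Cstar_isotrop}\eqref{1Cstar_isotrop_item1} we already have a $\Cc_0(S)$-linear $*$-isomorphism $C^*(\Tc)\simeq\Gamma_0(S,E')$ for an \emph{upper semicontinuous} $C^*$-bundle $q'\colon E'\to S$ with fibers $C^*(\Tc_s)$; the task is to upgrade upper semicontinuity of $s\mapsto\Vert a(s)\Vert$ to continuity for every section $a$. I would use the standard criterion (e.g.\ \cite[Prop.\ C.10]{Wi07} or Appendix~C there): an upper semicontinuous bundle is continuous as soon as there is a subset of sections that is dense in every fiber and for which the norm function is continuous. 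The hypothesis supplies exactly such a set, namely the image of the $*$-subalgebra $\Fc$: for $\psi\in\Fc$, the section is $s\mapsto\psi\vert_{\Tc_s}$, its norm function $\widetilde\psi$ is assumed continuous, and $\{\psi\vert_{\Tc_s}\mid\psi\in\Fc\}$ is dense in $C^*(\Tc_s)$ for each $s$ by hypothesis. Invoking that criterion upgrades the bundle to a continuous one, so $p$ is a strong group bundle.

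For \eqref{bund2bis_item2}, first note that $q\colon\theta^*(\Tc)\to\Xi$ is a group bundle by Lemma~\ref{bund2}. The key identification $\Cc_0(\Xi)\otimes_{\Cc_0(S)}C^*(\Tc)\simeq C^*(\theta^*(\Tc))$ I would establish at the level of the dense convolution subalgebras: the bilinear map $(f,\psi)\mapsto (f\otimes\psi)\vert_{\theta^*(\Tc)}$, $((f\otimes\psi)\vert_{\theta^*(\Tc)})(\xi,t)=f(\xi)\psi(t)$, lands in $\Cc_c(\theta^*(\Tc))$, is $\Cc_c(S)$-balanced (using $\Cc_0(S)$ acts on $C^*(\Tc)$ through $p$ and on $\Cc_c(\Xi)$ through $\theta$, which matches on $\theta^*(\Tc)$ by the defining relation $\theta(\xi)=p(t)$), and is a $*$-homomorphism for the convolution structures — the fiberwise convolution of $\theta^*(\Tc)$ over $\xi$ is just the convolution of $\Tc_{\theta(\xi)}$, so this is a routine check. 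For isometry one compares norms fiberwise: on the balanced tensor product, the $\Cc_0(\Xi)$-algebra norm of $\sum f_i\otimes\psi_i$ at $\xi\in\Xi$ is $\Vert\sum f_i(\xi)\,\psi_i\vert_{\Tc_{\theta(\xi)}}\Vert_{C^*(\Tc_{\theta(\xi)})}$, which equals the norm of the corresponding element of $C^*(\theta^*(\Tc))$ in the fiber $C^*((\theta^*(\Tc))_\xi)=C^*(\Tc_{\theta(\xi)})$; then one takes the sup over $\xi$ on both sides, using the $\Cc_0$-algebra description of each side. Finally, to see $q$ is itself a strong group bundle, I would apply \eqref{bund2bis_item1b} with $\Fc$ the image of $\Cc_c(\Xi)\odot\Cc_c(\Tc)$ (spanned by the functions $(\xi,t)\mapsto f(\xi)\psi(t)$): its restrictions to each fiber $(\theta^*(\Tc))_\xi\cong\Tc_{\theta(\xi)}$ are dense since the $\psi\vert_{\Tc_{\theta(\xi)}}$ already are, and the norm function of such an element is $\xi\mapsto\Vert\sum f_i(\xi)\psi_i\vert_{\Tc_{\theta(\xi)}}\Vert$, continuous as a finite combination built from the continuous functions $\widetilde{\psi_i}\circ$(fiberwise linear combinations) — more precisely it is continuous because $(s,a)\mapsto\Vert a\Vert$ is continuous on the total space of the continuous bundle $E\to S$ associated to $\Tc$ and $\xi\mapsto\sum f_i(\xi)\psi_i\vert_{\Tc_{\theta(\xi)}}$ is a continuous $E$-valued map over $\theta$.

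The main obstacle I anticipate is the isometry claim in \eqref{bund2bis_item2}: one must be careful that the relevant norm on the algebraic balanced tensor product $\Cc_0(\Xi)\otimes_{\Cc_0(S)}C^*(\Tc)$ is the (unique, by nuclearity of the $\Cc_0$ parts / the $\Cc_0(S)$-algebra structure) $\Cc_0(S)$-balanced $C^*$-norm, and that its fiberwise description genuinely matches the fiberwise norm on $C^*(\theta^*(\Tc))$ coming from Lemma~\ref{1Cstar_isotrop}. This is where the ``strong'' hypothesis on $p$ is indispensable: it guarantees the fiberwise norm functions on the $\Tc$-side are continuous, so that the two upper semicontinuous bundle pictures, pulled back along $\theta$, are actually the \emph{same} continuous bundle over $\Xi$ and the induced map is isometric, not merely contractive. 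Everything else — the group bundle structure, the $*$-homomorphism property, $\Cc_0(\Xi)$-linearity, density of the image — is a routine verification on compactly supported functions.
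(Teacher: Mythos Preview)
Your proposal is correct and follows essentially the same strategy as the paper: both parts of (i) come from the standard characterization of continuous $C^*$-bundles (the paper cites Fell's theorem \cite[Th.~C.25]{Wi07}), and (ii) is proved by exhibiting a $*$-subalgebra $\Fc\subseteq\Cc_c(\theta^*(\Tc))$ satisfying the hypotheses of (i)(b). The only tactical differences are that the paper first localizes to compact $\Xi$ (noting that strongness is a local property) so that it can use the simpler $\Fc=\spann\{\1\otimes\psi\mid\psi\in\Cc_c(\Tc)\}$ rather than your full $\Cc_c(\Xi)\odot\Cc_c(\Tc)$, and for the isomorphism $\Cc_0(\Xi)\otimes_{\Cc_0(S)}C^*(\Tc)\simeq C^*(\theta^*(\Tc))$ it invokes \cite[Prop.~1.3]{RWi85} directly rather than sketching the fiberwise norm comparison as you do.
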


\begin{proof}
Let $E:=\{C^*(\Tc_s)\}_{s\in S}$ be the upper semicontinuous $C^*$-bundle 
whose $C^*$-algebra of global sections $\Gamma_0(E)$ 
is canonically isomorphic to the groupoid $C^*$-algebra $C^*(\Tc)$. 

For every $\psi\in\Cc(\Tc)$, its corresponding map $s\mapsto \psi\vert_{\Tc_s}\in C^*(\Tc_s)$ 
is a global cross-section of $E$,  
hence both parts \eqref{bund2bis_item1}--\eqref{bund2bis_item2} of Assertion~\eqref{bund2bis_item1} 
follow by Fell's theorem \cite[Th. C.25]{Wi07}. 

For Assertion~\eqref{bund2bis_item2}, assume that $p\colon\Tc\to S$ is a strong group bundle, 
hence $q\colon \theta^*(\Tc)\to\Xi$ is a group bundle by Lemma~\ref{bund2}. 
Then we note that the property that $q$ be a strong bundle is a local property with respect 
to the points in~$\Xi$. 
Therefore, restricting $q$ to compact subsets of $\Xi$ with nonempty interior, 
we may assume that $\Xi$ is compact. 

Then let $\psi\in\Cc_c(\Tc)$ arbitrary, and  
define $\1\otimes\psi\colon \theta^*(\Tc)\to\CC$, 
$(\1\otimes\psi)(\xi,t)=\psi(t)$ for all $(\xi,t)\in\theta^*(\Tc)$. 
It is clear that $\1\otimes\psi\in\Cc(\theta^*(\Tc))$. 
We now check that the set $\Fc:=\spann\{\1\otimes\psi\mid \psi\in\Cc_c(\Tc)\}$ 
satisfies the hypothesis of Assertion~\eqref{bund2bis_item2} for the group bundle $q\colon\theta^*(\Tc)\to\Xi$. 
If  $\psi_1,\psi_2\in\Cc_c(\Tc)$, 
then  
$(\1\otimes\psi_1)\ast(\1\otimes\psi_2)=\1\otimes(\psi_1\ast\psi_2)$. 
One also easily checks that $(\1\otimes\psi)^*=\1\otimes\psi^*$, and it then follows that 
$\Fc$ is a $*$-subalgebra of the convolution algebra $\Cc_c(\theta^*(\Tc))$. 

For every $\xi\in\Xi$ one has 
$(\1\otimes\psi)\vert_{(\theta^*(\Tc))_\xi}=\psi\vert_{\Tc_{\theta(\xi)}}\in\Cc_c(\Tc_{\theta(\xi)})$, 
which implies at once that $\{(\1\otimes\psi)\vert_{(\theta^*(\Tc))_\xi}\mid\psi\in\Cc(\Tc)\}$ 
contains $\Cc_c(\Tc_{\theta(\xi)})$, hence is dense in $C^*(\Tc_{\theta(\xi)})=C^*((\theta^*(\Tc))_\xi)$. 
Also, one has $\Vert(\1\otimes\psi)\vert_{(\theta^*(\Tc))_\xi}\Vert_{C^*((\theta^*(\Tc))_\xi)}
=\Vert\psi\vert_{\Tc_{\theta(\xi)}}\Vert_{C^*(\Tc_{\theta(\xi)})}$, 
which depends continuously on $\xi\in\Xi$ since $\theta\colon\Xi\to S$ is continuous by hypothesis 
and $s\mapsto\Vert\psi\vert_{\Tc_s}\Vert_{C^*(\Tc_s)}$ is continuous by Assertion~\eqref{bund2bis_item1a}. 

Thus the group bundle $q\colon\theta^*(\Tc)\to\Xi$ satisfies the hypothesis of Assertion~\eqref{bund2bis_item1b},  
and  then $q$ is a strong group bundle. 
Finally, the $*$-isomorphism from the statement follows by \cite[Prop. 1.3]{RWi85}, 
and we are done. 
\end{proof}

Now we show that to every locally compact group there corresponds a group bundle 
whose fibers are the closed subgroups of the group under consideration. 
We establish in Proposition~\ref{strong} a lesser known aspect of 
this construction that goes back to \cite{Fe64} and is a version of the construction of the tautological vector bundle 
over the Grassmann manifold of subspaces of a vector space. 
It is remarkable that the well known universality property of the tautological vector bundle 
has an analogue for the group bundles associated to locally compact groups and 
this idea plays a central role in the investigation of transformation groups in terms of their isotropy groups 
(see particularly Corollary~\ref{Morita4}).
Specifically, for any locally compact group~$G$ one defines 
\begin{itemize}
\item $\Kc(G):=\{K\mid K\text{ closed subgroup}\subseteq G\}$ 
\item $\Tc(G):=\{(K,g)\in\Kc(G)\times G\mid g\in K\}$ 
\item $p\colon\Tc(G)\to\Kc(G)$, $p(K,g):=K$.
\end{itemize}
Then it is well known that $\Kc(G)$ is a compact topological space with its Fell topology, for which a basis   consists
of the sets 
\begin{equation}
\label{Fellbasis}
\Uc(C,\Sc):=\{K\in\Fc(G)\mid K\cap C=\emptyset;\ (\forall A\in\Sc)\ K\cap A\ne\emptyset\}
\end{equation}
for all compact sets $C\subseteq G$ and all finite sets $\Sc$ of open subsets of~$H$. 
Moreover $\Tc(G)$ is a  closed subset of $\Kc(G)\times G$, 
hence $\Tc(G)$ is locally compact with its relative topology. 
(See for instance \cite[App. H]{Wi07}.)
We will prove in Lemma~\ref{bund1} below that $p\colon\Tc(G)\to\Kc(G)$ is a group bundle, 
hence by Remark~\ref{Re91_Lemma1.3} it has Haar systems (see also \cite[Appendix]{Gl62}). 
We fix a Haar system on the group bundle $p\colon\Tc(G)\to\Kc(G)$  
and we denote its corresponding $C^*$-algebra by $C^*(\Tc(G))$. 

\begin{lemma}\label{bund1}
For any locally compact group $G$ the map $p\colon \Tc(G)\to\Kc(G)$ 
is a group bundle.  
\end{lemma}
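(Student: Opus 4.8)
The plan is to verify the two defining conditions of a group bundle for $p\colon\Tc(G)\to\Kc(G)$: first, that the range and domain maps of the associated groupoid coincide, and second (the substantive point, since all our groupoids are required to have open structure maps) that $p$ is an open map. The groupoid structure on $\Tc(G)=\{(K,g)\in\Kc(G)\times G\mid g\in K\}$ is the obvious one: the set of units is $\Kc(G)$ (identified with $\{(K,1_G)\mid K\in\Kc(G)\}$), two elements $(K,g)$ and $(K',g')$ are composable exactly when $K=K'$, with product $(K,g)(K,g')=(K,gg')$, and inverse $(K,g)^{-1}=(K,g^{-1})$; both $d$ and $r$ send $(K,g)$ to $K$, so $d=r$ and the "group bundle" condition on the maps is immediate. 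The fibers $p^{-1}(K)=\{K\}\times K\cong K$ are closed subgroups of $G$, hence locally compact groups, and the groupoid operations are restrictions of the continuous coordinatewise operations on $\Kc(G)\times G$, so they are continuous; the unit inclusion $\Kc(G)\hookrightarrow\Tc(G)$ is a homeomorphism onto its image because it is the restriction of $K\mapsto(K,1_G)$.

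The real work is showing $p$ is open. I would take a basic open subset of $\Tc(G)$ of the form $\bigl(\Uc(C,\Sc)\times W\bigr)\cap\Tc(G)$, where $\Uc(C,\Sc)$ is a Fell-basis open set in $\Kc(G)$ as in \eqref{Fellbasis} (with $C\subseteq G$ compact and $\Sc$ a finite family of open subsets of $G$) and $W\subseteq G$ is open, and compute
$$p\bigl((\Uc(C,\Sc)\times W)\cap\Tc(G)\bigr)=\{K\in\Uc(C,\Sc)\mid K\cap W\ne\emptyset\}.$$
If $1_G\notin W$ this is just $\Uc(C,\Sc)\cap\Uc(C,\Sc\cup\{W\})=\Uc(C,\Sc\cup\{W\})$, which is open. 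The only subtlety is when $1_G\in W$: then $K\cap W\ne\emptyset$ automatically for every closed subgroup $K$, so the image equals $\Uc(C,\Sc)$ itself, still open. In either case the image of a basic open set is open in $\Kc(G)$, and since $p$ is continuous and surjective and these sets form a basis, $p$ is an open map. (One should note that the sets $(\Uc(C,\Sc)\times W)\cap\Tc(G)$ do form a basis for the topology of $\Tc(G)$, as it carries the subspace topology from $\Kc(G)\times G$.)

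The main obstacle — such as it is — is bookkeeping with the Fell topology rather than any deep point: one must be careful that $W$ is required to be an open subset of $G$ (so that $\Sc\cup\{W\}$ is again an admissible family for the Fell basis) and that the compact-avoidance constraint coming from $C$ is unaffected by intersecting with the extra condition $K\cap W\ne\emptyset$. Once the identity for $p$ of a basic open set is written down, openness is transparent. Combined with Remark~\ref{Re91_Lemma1.3}, this also yields that $p\colon\Tc(G)\to\Kc(G)$ admits left Haar systems, as used in the discussion preceding the lemma.
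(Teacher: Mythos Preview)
Your proof is correct and follows essentially the same route as the paper: compute $p\bigl((\Uc(C,\Sc)\times W)\cap\Tc(G)\bigr)=\{K\in\Uc(C,\Sc)\mid K\cap W\ne\emptyset\}=\Uc(C,\Sc\cup\{W\})$, which is open by definition of the Fell basis. Your case split on whether $1_G\in W$ is harmless but unnecessary, since the identity $\{K\in\Uc(C,\Sc)\mid K\cap W\ne\emptyset\}=\Uc(C,\Sc\cup\{W\})$ holds regardless (when $1_G\in W$ the added constraint is vacuous and $\Uc(C,\Sc\cup\{W\})=\Uc(C,\Sc)$).
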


\begin{proof} 
We keep the notation in \eqref{Fellbasis} above. 
We must prove that for any set $\Uc(C,\Sc)$ as above and any open set $D\subseteq G$, 
if $(\Uc(C,\Sc)\times D)\cap \Tc(G)\ne\emptyset$, then $p((\Uc(C,\Sc)\times D)\cap \Tc(G))$ is an open subset of $\Kc(G)$. 
Indeed, for any $K\in\Fc(G)$, we have $K\in p((\Uc(C,\Sc)\times D)\cap \Tc(G))$ if and only if $K\in \Uc(C,\Sc)$ and 
there exists $x\in D\cap K$, 
since if that is the case, then $(K,x)\in (\Uc(C,\Sc)\times D)\cap \Tc(G)$ and $K=p(K,x)$. 
Therefore 
$$p((\Uc(C,\Sc)\times D)\cap \Tc(G))=\{K\in\Uc(C,\Sc)\mid K\cap D\ne\emptyset\}=\Uc(C,\Sc\cup\{D\})$$
which is an open subset of $\Kc(G)$, and this concludes the proof. 
\end{proof}

It is easily seen that the proof of Lemma~\ref{bund1} carries over for the space of closed subsets of any locally compact space 
(rather than closed subgroups of a locally compact group), and leads to the fact that the corresponding map $(K,g)\mapsto K$ 
is open. 

\begin{proposition}\label{strong}
If a locally compact group $G$ is amenable, then $p\colon\Tc(G)\to\Kc(G)$ is a strong group bundle. 
\end{proposition}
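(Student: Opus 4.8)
The plan is to verify the hypotheses of the criterion in Lemma~\ref{bund2bis}\eqref{bund2bis_item1b} for the group bundle $p\colon\Tc(G)\to\Kc(G)$, taking for $\Fc$ the whole convolution algebra $\Cc_c(\Tc(G))$. The density hypothesis is immediate: for every $K\in\Kc(G)$ the singleton $\{K\}$ is a closed subset of the compact Hausdorff space $\Kc(G)$, and it is $\Tc(G)$-invariant since $\Tc(G)$ is a group bundle; hence the corresponding reduced groupoid, whose morphism space is the closed subset $p^{-1}(K)\subseteq\Tc(G)$, is — with the Haar system inherited from $\Tc(G)$ — the locally compact group $K$, and the Tietze argument of Remark~\ref{rem2.1} shows that $\{\psi\vert_{p^{-1}(K)}\mid\psi\in\Cc_c(\Tc(G))\}$ contains $\Cc_c(K)$, hence is dense in $C^*(K)$. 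So everything reduces to proving that for each fixed $\psi\in\Cc_c(\Tc(G))$ the function $\widetilde\psi\colon\Kc(G)\to[0,\infty)$, $\widetilde\psi(K):=\Vert\psi\vert_{p^{-1}(K)}\Vert_{C^*(K)}$, is continuous, and I would obtain this by showing it is both upper and lower semicontinuous.

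Upper semicontinuity costs nothing: by Lemma~\ref{1Cstar_isotrop}\eqref{1Cstar_isotrop_item1}, $C^*(\Tc(G))$ is $\Cc_0(\Kc(G))$-linearly $*$-isomorphic to the section algebra of an upper semicontinuous $C^*$-bundle over $\Kc(G)$ with fibre $C^*(K)$ over $K$, and under this isomorphism $\psi$ corresponds to the section $K\mapsto\psi\vert_{p^{-1}(K)}$, whose norm is therefore upper semicontinuous in $K$. Amenability enters precisely here: a closed subgroup of an amenable locally compact group is amenable, so every $K\in\Kc(G)$ is amenable and $C^*(K)=C^*_{\rm red}(K)$; consequently $\widetilde\psi(K)=\Vert\psi\vert_{p^{-1}(K)}\Vert_{C^*_{\rm red}(K)}=\Vert\Lambda_K(\psi)\Vert$, where $\Lambda_K\colon\Cc_c(\Tc(G))\to\Bc(L^2(p^{-1}(K),\lambda_{K}))$ is the regular representation of the groupoid $\Tc(G)$ at the unit $K$ (it depends only on $\psi\vert_{p^{-1}(K)}$, and up to the usual left/right Haar normalization it is the left regular representation of the group $K$). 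It therefore remains to show that $K\mapsto\Vert\Lambda_K(\psi)\Vert$ is lower semicontinuous; together with the upper semicontinuity above this yields continuity of $\widetilde\psi$, and then Lemma~\ref{bund2bis} gives that $\Tc(G)$ is a strong group bundle.

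For the lower semicontinuity, fix $K_0\in\Kc(G)$ and $\varepsilon>0$. Since $\{u\vert_{p^{-1}(K_0)}\mid u\in\Cc_c(\Tc(G))\}$ is dense in $L^2(p^{-1}(K_0),\lambda_{K_0})$, choose $u\in\Cc_c(\Tc(G))$ with $\Vert u\vert_{p^{-1}(K_0)}\Vert_2=1$ and $\Vert\Lambda_{K_0}(\psi)(u\vert_{p^{-1}(K_0)})\Vert_2>\Vert\Lambda_{K_0}(\psi)\Vert-\varepsilon$, and set $w:=\psi\ast u\in\Cc_c(\Tc(G))$, so that $\Lambda_K(\psi)(u\vert_{p^{-1}(K)})=w\vert_{p^{-1}(K)}$ for every $K$. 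Both $K\mapsto\Vert u\vert_{p^{-1}(K)}\Vert_2^2$ and $K\mapsto\Vert w\vert_{p^{-1}(K)}\Vert_2^2$ are continuous, because each has the form $K\mapsto\int v\,\de\lambda^{K}$ for a suitable $v\in\Cc_c(\Tc(G))$ — namely $v=\vert u\circ\iota\vert^2$, respectively $v=\vert w\circ\iota\vert^2$, with $\iota$ the inversion of $\Tc(G)$ — and the Haar system of $\Tc(G)$ is continuous. Hence there is a neighbourhood $N$ of $K_0$ on which $\Vert u\vert_{p^{-1}(K)}\Vert_2<1+\varepsilon$ and $\Vert w\vert_{p^{-1}(K)}\Vert_2>\Vert\Lambda_{K_0}(\psi)\Vert-2\varepsilon$, and so for $K\in N$
$$\Vert\Lambda_K(\psi)\Vert\ \ge\ \frac{\Vert w\vert_{p^{-1}(K)}\Vert_2}{\Vert u\vert_{p^{-1}(K)}\Vert_2}\ >\ \frac{\Vert\Lambda_{K_0}(\psi)\Vert-2\varepsilon}{1+\varepsilon}.$$
Letting $\varepsilon\downarrow0$ gives $\liminf_{K\to K_0}\Vert\Lambda_K(\psi)\Vert\ge\Vert\Lambda_{K_0}(\psi)\Vert$, which is the required lower semicontinuity. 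I expect the main obstacle to be exactly this last step — the lower semicontinuity of the field of regular representations of the fibre groups along $\Kc(G)$, an observation that in essence goes back to Fell — which rests on being able to choose, for each base point $K_0$, a single global test function $u\in\Cc_c(\Tc(G))$ whose fibrewise $L^2$-data vary continuously with $K$ (this is exactly where continuity of the Haar system is used), together with the density of such restrictions in each fibre Hilbert space. A further, routine but non-vacuous, point is the identification of the reduced groupoids $p^{-1}(K)$ with the groups $K$ carrying the Haar measures supplied by the fixed Haar system of $\Tc(G)$, so that $\Lambda_K$ really is the group regular representation and the amenability of $K$ may legitimately be invoked.
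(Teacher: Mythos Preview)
Your argument is correct and takes a genuinely different route from the paper's. The paper does not use the criterion of Lemma~\ref{bund2bis}\eqref{bund2bis_item1b} at all; instead it invokes \cite[Th.~C.26]{Wi07}, which says the upper semicontinuous bundle is continuous if and only if the canonical map $\pi\colon\widehat{C^*(\Tc(G))}\to\Kc(G)$ is open, and then checks openness of~$\pi$ by quoting Fell's results \cite[Th.~2.1 and Cor.~6 in \S5]{Fe64} on weak containment and continuity of induction along $\Kc(G)$ (amenability enters there because every closed subgroup is amenable, hence every irreducible is weakly contained in the regular representation). Your approach bypasses both the dual-space picture and the openness criterion: you go straight to the fibrewise $C^*$-norms, get upper semicontinuity for free from Lemma~\ref{1Cstar_isotrop}, use amenability only to replace the full norm by the reduced one, and then prove lower semicontinuity of $K\mapsto\Vert\Lambda_K(\psi)\Vert$ by an elementary test-vector argument exploiting continuity of the Haar system. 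This is more self-contained within the paper --- it uses only Lemmas~\ref{1Cstar_isotrop} and~\ref{bund2bis} plus Remark~\ref{rem2.1} --- whereas the paper's proof is shorter on the page but defers the real work to the cited theorems of Fell and Williams. One small polish: in your lower-semicontinuity step you should also record that the neighbourhood can be chosen so that $\Vert u\vert_{p^{-1}(K)}\Vert_2>0$ (immediate from continuity and $\Vert u\vert_{p^{-1}(K_0)}\Vert_2=1$), so the displayed ratio is defined.
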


\begin{proof}
Since $p\colon\Tc(G)\to\Kc(G)$ is a group bundle, its $C^*$-algebra 
$C^*(\Tc(G))$ has the canonical structure of a $\Cc(\Kc(G))$-algebra 
and is $\Cc(\Kc(G))$-linearly $*$-iso\-morphic   
to the algebra of sections of an upper semicontinuous $C^*$-bundle $q\colon \Ec(G)\to \Kc(G)$ 
whose fiber over any $K\in\Kc(G)$ is $*$-isomorphic to $C^*(K)$,  
by Lemma~\ref{1Cstar_isotrop}\eqref{1Cstar_isotrop_item1}. 
We must prove that $q\colon \Ec(G)\to \Kc(G)$ is a continuous $C^*$-bundle. 
That continuity property will be obtained by an application of \cite[Th. C.26]{Wi07}, 
and for that we need to check that the continuous map 
$\pi\colon \widehat{C^*(\Tc(G))}\to\Kc(G)$ which defines 
the $\Cc(\Kc(G))$-algebra structure of $C^*(\Tc(G))$ is also an open map. 

To this end we will first describe the map $\pi$ in more detail. 
It follows by \cite[Lemmas 2.6 and 2.8]{Fe64} that for every $K\in\Kc(G)$ 
one has a canonical injective map $\widehat{K}\hookrightarrow\widehat{C^*(\Tc(G))}$ which is a homeomorphism 
onto its image, and whose image is closed, 
and moreover 
$\widehat{C^*(\Tc(G))}=\bigsqcup\limits_{K\in\Kc(G)}\widehat{K}$. 
The natural map $\widehat{C^*(\Tc(G))}\to\Kc(G)$, $x\mapsto K$ if $x\in\widehat{K}$,  defined by the above partition, 
 is continuous by \cite[Lemmas 2.8 and 2.5]{Fe64} and is exactly the map $\pi$ we seek to describe. 

Since the group $G$ is  amenable, it follows that every $K\in\Kc(G)$ is an amenable locally compact group, 
hence every unitary irreducible representation of $K$ is weakly contained in the regular representation of~$K$. 
It then follows by \cite[Th. 2.1 and Cor. 6 in \S 5]{Fe64} that $\pi\colon\widehat{C^*(\Tc(G))}\to\Kc(G)$ is an open map, 
and we are done. 
\end{proof}

\subsection{Duals of $C^*$-algebras of regular groupoids}

In this subsection we discuss a class of groupoids  
for which the irreducible representations of their corresponding $C^*$-algebras 
can be described in terms of the representation theory of their isotropy groups. 

We will use the following terminology 
which is inspired by (but slightly different from) \cite{Go10} and \cite{Go12};  
see also \cite{Ra90}.

\begin{definition}\label{regular}
\normalfont
A locally compact groupoid $\Gc\tto\Gc^{(0)}$ endowed with a left Haar system~$\lambda$  
is called \emph{regular} if it satisfies the following additional conditions: 
\begin{enumerate}
\item Every orbit of $\Gc$ is a locally closed subset of $\Gc^{(0)}$.
\item The isotropy subgroupoid $\Gc(\cdot):=\bigsqcup\limits_{x\in \Gc^{(0)}}\Gc(x)$ 
is a group bundle. 
\end{enumerate}
\end{definition}
In the above definition, the condition that $\Gc(\cdot)$ be a group bundle actually requires that 
the natural projection $\Gc(\cdot)\to\Gc^{(0)}$ is an open map, 
and this implies that $\Gc(\cdot)$ is a locally compact groupoid with a Haar system (see Remark~\ref{Re91_Lemma1.3}). 
Some examples of regular groupoids are constructed in Proposition~\ref{29Nov2015} 
as groupoid pullbacks of group bundles. 

With Lemma~\ref{1Cstar_isotrop} at hand, we can prove the following theorem, 
where we use the notation $\Ind$ for induced representations of groupoids 
(see for instance \cite{IW09a}). 
This theorem goes back to \cite[Th. 2.1]{Gl62} in the special case of transformation groups. 

\begin{theorem}\label{Cstar_grpd}
For a regular groupoid $\Gc\tto\Gc^{(0)}$,  
define 
$$\Phi\colon \widehat{C^*(\Gc(\cdot))} 
\to\widehat{C^*(\Gc)}, 
\quad 
\Phi([\pi]):=[\Ind_{\Gc(x)}^{\Gc}\pi],$$
for $[\pi]\in \widehat{\Gc(x)}\simeq\widehat{C^*(\Gc(x))}\subseteq\widehat{C^*(\Gc(\cdot))}$ 
for $x\in\Gc^{(0)}$. 
Then $\Phi$ is a continuous open surjective map, and it induces a homeomorphism 
$\widehat{C^*(\Gc(\cdot))}/\Gc\simeq\widehat{C^*(\Gc)}$, 
where the left-hand side is the quotient of 
$\widehat{C^*(\Gc(\cdot))}\simeq\bigsqcup\limits_{x\in\Gc^{(0)}}\widehat{\Gc(x)}$ 
by the natural action of~$\Gc$. 
\end{theorem}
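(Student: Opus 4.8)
The plan is to analyze $\Phi$ in two stages: first establish that it is a well-defined continuous open surjection, then identify its fibers with $\Gc$-orbits. For well-definedness and continuity, I would combine Lemma~\ref{1Cstar_isotrop}\eqref{1Cstar_isotrop_item2}, which gives the disjoint decomposition $\widehat{C^*(\Gc(\cdot))}=\bigsqcup_{x}\widehat{\Gc(x)}$ together with the continuous map $\pi$ to $\Gc^{(0)}$, with the continuity of groupoid induction $\Ind_{\Gc(x)}^{\Gc}$ in the relevant sense (cf. \cite{IW09a}). The hypothesis that every orbit is locally closed is what makes the reductions $\Gc_U$ behave well (Remark~\ref{transitive}): for $x\in\Gc^{(0)}$ with orbit $U=\Gc\cdot x$, one has $C^*(\Gc_U)\simeq C^*(\Gc(x))\otimes\Kc(L^2(U,\mu))$, so that irreducible representations of $C^*(\Gc_U)$ are exactly those of the form $\pi\otimes\id$ with $[\pi]\in\widehat{\Gc(x)}$, and $\Ind_{\Gc(x)}^{\Gc}\pi$ is the representation of $C^*(\Gc)$ obtained by composing with $C^*(\Gc)\to C^*(\Gc)/I(U')$ for a suitable invariant open $U'$ and then with a representation supported on that orbit. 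This is precisely the mechanism of the Effros--Hahn / Mackey picture, and Glimm's Theorem~\cite[Th.~2.1]{Gl62} is the template.

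Next I would prove surjectivity and compute the fibers. For surjectivity: given $[\rho]\in\widehat{C^*(\Gc)}$, its support in the primitive ideal space corresponds (via the correspondence of Proposition~\ref{Renault_page101}\eqref{Renault_page101_item1} between invariant open sets and ideals, and the $T_0$-ness of $\Gc^{(0)}/\Gc$ from \cite{Ra90}) to a unique orbit $U=\Gc\cdot x$; restricting to the ideal $I(U)\simeq C^*(\Gc_U)$ and using $C^*(\Gc_U)\simeq C^*(\Gc(x))\otimes\Kc$ produces $[\pi]\in\widehat{\Gc(x)}$ with $\Phi([\pi])=[\rho]$. For the fibers: two representations $\pi_1$ of $\Gc(x_1)$ and $\pi_2$ of $\Gc(x_2)$ induce equivalent representations of $C^*(\Gc)$ if and only if they lie in the same orbit $U$ and $\pi_2$ is equivalent to the conjugate of $\pi_1$ by a groupoid element $g$ with $d(g)=x_1$, $r(g)=x_2$; this is the standard ``imprimitivity'' statement for the transitive reduction $\Gc_U$, whose regular representations are all equivalent (Proposition~\ref{dense2} style arguments, or directly \cite{MRW87}). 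Thus $\Phi$ factors through a continuous bijection $\widehat{C^*(\Gc(\cdot))}/\Gc\to\widehat{C^*(\Gc)}$.

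Finally, openness of $\Phi$ (which then upgrades the continuous bijection on the quotient to a homeomorphism, since the quotient map $\widehat{C^*(\Gc(\cdot))}\to\widehat{C^*(\Gc(\cdot))}/\Gc$ is continuous and open by construction of the quotient topology): I would deduce this from the openness half of the induction-is-continuous-and-open machinery, or, in the spirit of the rest of Section~\ref{Sect3}, reduce to the bundle case where Lemma~\ref{1Cstar_isotrop} and Fell's results on $\Kc(G)$ (used in Proposition~\ref{strong}) give openness of the natural projection $\widehat{C^*(\Gc(\cdot))}\to\Gc^{(0)}$, and combine with openness of $d,r$ and of the orbit map. The main obstacle I anticipate is the openness of $\Phi$: well-definedness, continuity, and the set-theoretic fiber computation are all fairly mechanical consequences of Remark~\ref{transitive} and the ideal correspondence, but openness requires genuinely using a continuity/openness property of groupoid induction — essentially that $\Ind$ respects weak containment in both directions — and this is the step where one must be careful about exactly which reference (\cite{IW09a}, \cite{Gl62}, or \cite{Go10,Go12}) supplies the needed statement in the required generality.
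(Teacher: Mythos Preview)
Your plan is sound and outlines the correct Effros--Hahn/Mackey analysis, but the paper's own proof is much shorter: it simply cites \cite[Prop.~4.13]{Go10} for the well-definedness of $\Phi$ (irreducibility of the induced representation) and then invokes Lemma~\ref{1Cstar_isotrop} together with \cite[Th.~2.22]{Go12}, which is precisely the statement that $\Phi$ is continuous, open, surjective, and descends to a homeomorphism on the $\Gc$-quotient. In other words, the heavy lifting you sketch---the orbit localization via the ideal correspondence, the Morita reduction on each orbit from Remark~\ref{transitive}, the fiber computation, and especially the openness of induction---is exactly what Goehle packages in those two references for regular groupoids, and the paper treats the theorem as a direct consequence of his work rather than reproving it. Your instinct that openness is the crux is right: that is the part of \cite[Th.~2.22]{Go12} requiring the most work (continuity of induction in both directions with respect to the Fell topology), and your honesty about not being sure which reference supplies it in full generality is well placed---it is \cite{Go12}, not \cite{IW09a} or \cite{Gl62}, that provides it at the level of regular groupoids. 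What your longer route buys is an explanation of \emph{why} the regularity hypotheses (locally closed orbits, isotropy subgroupoid a group bundle) are needed; what the paper's route buys is brevity.
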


\begin{proof}
The definition of $\Phi$ is correct by \cite[Prop. 4.13]{Go10}, 
where it was established 
that every  irreducible representation of some isotropy group of~$\Gc$ induces an 
irreducible representation of $C^*(\Gc)$.
The assertion follows by Lemma~\ref{1Cstar_isotrop} and \cite[Th. 2.22]{Go12}. 
\end{proof}

For a locally compact space $T$ we denote by $\beta T$ its Stone-\v Cech compactification.

\begin{corollary}\label{multipl}
Let $\Gc\tto\Gc^{(0)}$ be a regular groupoid
whose isotropy groups $\Gc(x)$ are  of type I and amenable. 
Assume in addition that the orbit space $\Gc^{(0)}/\Gc$  is locally compact and Hausdorff. 
Then $M(C^*(\Gc))$ is a $C(\beta(\Gc^{(0)}/\Gc))$-algebra.
\end{corollary}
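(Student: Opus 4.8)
\textbf{Proof plan for Corollary~\ref{multipl}.}
The plan is to combine Theorem~\ref{Cstar_grpd} with the structural results on group bundles (Lemma~\ref{1Cstar_isotrop}) and to exploit the amenability and type~I hypotheses to pass from upper semicontinuity to genuine continuity of the relevant bundle. First I would observe that since each isotropy group $\Gc(x)$ is type~I and amenable, the group bundle $\Gc(\cdot)$ is in fact a strong group bundle: by Lemma~\ref{1Cstar_isotrop}\eqref{1Cstar_isotrop_item1} we have an upper semicontinuous $C^*$-bundle over $\Gc^{(0)}$ with fibers $C^*(\Gc(x))$, and the argument of Proposition~\ref{strong} (weak containment of irreducibles in the regular representation, openness of the canonical map $\widehat{C^*(\Gc(\cdot))}\to\Gc^{(0)}$ via \cite[Th.~2.1]{Fe64}) upgrades it to a continuous bundle, so $C^*(\Gc(\cdot))\simeq\Gamma_0(E)$ for a continuous $C^*$-bundle $E$ over $\Gc^{(0)}$.

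Next I would use Theorem~\ref{Cstar_grpd}, which identifies $\widehat{C^*(\Gc)}$ with the quotient $\widehat{C^*(\Gc(\cdot))}/\Gc$, together with Lemma~\ref{1Cstar_isotrop}\eqref{1Cstar_isotrop_item2}, which gives a continuous map $\pi\colon\widehat{C^*(\Gc(\cdot))}\to\Gc^{(0)}$. Composing with the quotient map $q\colon\Gc^{(0)}\to\Gc^{(0)}/\Gc$ and using $\Gc$-invariance of $\pi$ (the induced representation $\Ind_{\Gc(x)}^\Gc\pi$ only depends on the orbit of $x$), one obtains a continuous map $\widehat{C^*(\Gc)}\to\Gc^{(0)}/\Gc$. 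Since $\Gc^{(0)}/\Gc$ is assumed locally compact Hausdorff, by the universal property of the Stone-\v Cech compactification this extends to a continuous map $\beta(\widehat{C^*(\Gc)})\to\beta(\Gc^{(0)}/\Gc)$; equivalently, pulling back, we get a nondegenerate $*$-homomorphism $C(\beta(\Gc^{(0)}/\Gc))=C_b(\Gc^{(0)}/\Gc)\to C_b(\widehat{C^*(\Gc)})$. The point is then to lift this into the center of the multiplier algebra: a bounded continuous function on $\widehat{C^*(\Gc)}$ that is constant on each fiber $\widehat{C^*(\Gc(x))}$ and factors through $\Gc^{(0)}/\Gc$ defines, by the Dauns-Hofmann theorem, a central multiplier of $C^*(\Gc)$, and this assignment is a nondegenerate $*$-morphism $C(\beta(\Gc^{(0)}/\Gc))\to ZM(C^*(\Gc))$, which by definition (see \cite[Def.~C.1]{Wi07}) makes $M(C^*(\Gc))$ — indeed already $C^*(\Gc)$ at the level of its primitive ideal space — a $C(\beta(\Gc^{(0)}/\Gc))$-algebra; passing to multipliers is then automatic since $ZM(C^*(\Gc))\subseteq ZM(M(C^*(\Gc)))$.

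The main obstacle I anticipate is verifying that the map $\widehat{C^*(\Gc)}\to\Gc^{(0)}/\Gc$ is well-defined and continuous with the right target, and more precisely that the resulting homomorphism into $ZM(C^*(\Gc))$ is \emph{nondegenerate}: one must check that the central functions coming from $\Gc^{(0)}/\Gc$ act approximately unitarily on $C^*(\Gc)$, which uses that $\Gc^{(0)}/\Gc$ is locally compact Hausdorff (so that $C_0(\Gc^{(0)}/\Gc)$ has an approximate unit pulling back to an approximate unit of the relevant ideal structure) rather than merely $T_0$. The reason we are forced to use $\beta(\Gc^{(0)}/\Gc)$ instead of $\Gc^{(0)}/\Gc$ itself is that the orbit space need not be compact, so bounded continuous functions — not just $C_0$ — are the natural central multipliers; the Stone-\v Cech compactification is exactly the maximal ideal space of $C_b(\Gc^{(0)}/\Gc)$, which is why $C(\beta(\Gc^{(0)}/\Gc))$ is the correct coefficient algebra. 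Everything else is a matter of assembling the Dauns-Hofmann identification $ZM(\Ac)\simeq C_b(\widehat{\Ac})$ with the continuity statements already proved above.
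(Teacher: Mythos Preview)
Your overall strategy---obtain a continuous map $\widehat{C^*(\Gc)}\to\Gc^{(0)}/\Gc$ via Theorem~\ref{Cstar_grpd} and Lemma~\ref{1Cstar_isotrop}, then convert this into a central $C(\beta(\Gc^{(0)}/\Gc))$-action---is exactly the paper's. Two remarks, one cosmetic and one substantive.

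First, your opening paragraph (upgrading $\Gc(\cdot)$ to a \emph{strong} group bundle) is unnecessary: the argument only needs the continuous map $\widehat{C^*(\Gc(\cdot))}\to\Gc^{(0)}$ from Lemma~\ref{1Cstar_isotrop}\eqref{1Cstar_isotrop_item2}, which holds for any group bundle regardless of whether the associated $C^*$-bundle is continuous. The paper does not attempt this upgrade.

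Second, and more importantly, there is a genuine gap at the Dauns--Hofmann step. Dauns--Hofmann identifies $ZM(\Ac)$ with $C_b(\mathrm{Prim}\,\Ac)$, not $C_b(\widehat{\Ac})$. A bounded continuous function on $\widehat{C^*(\Gc)}$ defines a central multiplier only if it factors through $\mathrm{Prim}\,C^*(\Gc)$, i.e.\ is constant on sets of irreducibles sharing a kernel. You have not argued this. The paper closes this gap right at the start by invoking \cite[Th.~7.2]{Cl07}: since the isotropy groups are amenable and type~I and the orbit space is $T_0$, $C^*(\Gc)$ is GCR, hence type~I, so $\widehat{C^*(\Gc)}=\mathrm{Prim}\,C^*(\Gc)$ and the issue disappears. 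Without this step (or a direct argument that your map is constant on primitive-ideal fibers), your appeal to Dauns--Hofmann is unjustified. Once type~I is in place, the paper finishes by quoting \cite[Prop.~1.2]{AS11} rather than reproving the Dauns--Hofmann mechanism by hand, but that is only a packaging difference.
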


\begin{proof}
	We first recall that \cite[Th.~7.2]{Cl07} ensures that if $\Gc$ is a groupoid in which all the isotropy groups are amenable, then $C^*(\Gc)$ is GCR if and only if the orbit space is $T_0$ and the isotropy groups are GCR.  Since $\Gc(x)$ are type I and have separable $C^*$-algebras, we get that $C^*(\Gc)$ is of type I.
From Lemma~\ref{1Cstar_isotrop} we have that $C^*(\Gc(\cdot))$ is a $C_0(\Gc^{(0)})$-algebra, hence
there is a continuous map $\widehat {C^*(\Gc(\cdot))}\to \Gc^{(0)}$.  
This map commutes with the natural actions of the groupoid $\Gc$ on its domain and  range, hence it induces
a continuous map $\widehat {C^*(\Gc(\cdot))}/\Gc \to \Gc^{(0)}/\Gc$.
By Theorem~\ref{Cstar_grpd} we obtain a continuous map $\widehat {C^*(\Gc)} \to \Gc^{(0)}/\Gc$.
This implies, by \cite[Prop.~1.2]{AS11}, that $M(C^*(\Gc))$ is a $C(\beta(\Gc^{(0)}/\Gc))$-algebra.
This concludes the proof.
\end{proof}

\section{Pullback of groupoids}\label{Sect4}

A general method of constructing new examples of groupoids is the pullback (see for instance \cite[\S~3.3]{Wi15}).
As we will see below, the operation of pullback by continuous open surjections  
 preserves most of the properties that are relevant from the operator algebraic perspective, 
as for instance the topological properties of the orbits, the homeomorphism class of the orbit space, the isomorphism classes of isotropy groups, 
and existence of Haar systems. 
For the sake of completeness, we give  below a detailed presentation of an important result (Theorem~\ref{Morita1}), that asserts that the pullback of a groupoid by a continuous open surjection is equivalent to the initial groupoid.
This will be used in the later sections via its Corollary~\ref{Morita4}.

\begin{definition}
\normalfont
Let $\Gc\tto \Gc^{(0)}$ be a groupoid and $\theta\colon N\to \Gc^{(0)}$ be any map. 
The \emph{pullback} of $\Gc$ by $\theta$ is the groupoid $\theta^{\pullback}(\Gc)\tto N$ 
defined by 
$$\theta^{\pullback}(\Gc):=\{(n_2,g,n_1)\in N\times\Gc\times N\mid g\in\Gc_{\theta(n_1)}^{\theta(n_2)}\} $$ 
with its domain/range maps defined by $d(n_2,g,n_1)=n_1$ and $r(n_2,g,n_1)=n_2$ for all $(n_2,g,n_1)\in\theta^{\pullback}(\Gc)$. 
 The middle projection $\Theta(n_2, g, n_1)= g$, defines a map $\Theta\colon \theta^{\pullback}(\Gc)\to \Gc$ which is a groupoid morphism. 

If $\Gc\tto \Gc^{(0)}$ is a topological groupoid, $N$ a topological space and $\theta$ is continuous, 
then $\theta^{\pullback}(\Gc)\tto N$ is a topological groupoid 
with its topology induced from $N\times\Gc\times N$. 
\end{definition} 

We show in Proposition~\ref{29Nov2015} below that pullbacks of group bundles by surjective open maps are regular groupoids.

\begin{remark}
\normalfont
The map $\Theta$   
makes the following diagram commutative (in which the vertical arrows are either both domain maps or both range maps)
$$\begin{CD}
\theta^{\pullback}(\Gc) @>{\Theta}>> \Gc \\
@VVV @VVV \\
N @>{\theta}>> \Gc^{(0)}
\end{CD}$$
and moreover the map $\Theta$ gives a bijection 
$(\theta^{\pullback}(\Gc))_{n_1}^{n_2}\to \Gc_{\theta(n_1)}^{\theta(n_2)}$, 
for all $n_1,n_2\in N$.  

In particular, for every $n_0\in N$, one has the algebraic isomorphism of isotropy groups 
$(\theta^{\pullback}(\Gc))_{n_0}^{n_0}\to \Gc_{\theta(n_0)}^{\theta(n_0)}$, 
which is also an isomorphism of topological groups if $\Gc\tto \Gc^{(0)}$ is a topological groupoid. 
\end{remark}

\begin{remark}
\normalfont
For every $n_0\in N$, its corresponding $d$-fiber of $\theta^{\pullback}(\Gc)$ can be described as 
\begin{equation}\label{pb_eq1}
(\theta^{\pullback}(\Gc))_{n_0}=\{(n,g,n_0)\in \theta^{\pullback}(\Gc)\mid g\in\Gc_{\theta(n_0)},\ r(g)=\theta(n)\}
\end{equation}
hence one has the commutative diagram 
$$\begin{CD}
(\theta^{\pullback}(\Gc))_{n_0} @>{\Theta}>> \Gc_{\theta(n_0)} \\
@V{r}VV @VV{r}V \\
N @>{\theta}>> \Gc^{(0)}
\end{CD}
$$
Thus $(\theta^{\pullback}(\Gc))_{n_0}$ is in fact  the fiber product $\Gc_{\theta(n_0)}\times_{\Gc^{(0)}} N$. 
\end{remark}

\begin{remark}
\normalfont
The $\theta^{\pullback}(\Gc)$-orbit of the point $n_0\in N$ is the inverse image through $\theta$ 
of the $\Gc$-orbit of the point $\theta(n_0)\in \Gc^{(0)}$, since, using \eqref{pb_eq1}, one obtains 
\begin{equation}\label{pb_eq2}
(\theta^{\pullback}(\Gc))\cdot n_0=r((\theta^{\pullback}(\Gc))_{n_0})=\theta^{-1}(r(\Gc_{\theta(n_0)}))=
\theta^{-1}(\Gc\cdot \theta(n_0)).
\end{equation}
This shows that if both $\Gc^{(0)}$ and $N$ are topological spaces, $\theta$ is continuous, 
and the $\Gc$-orbit of the point $\theta(n_0)$ is an open/closed/locally closed subset of $\Gc^{(0)}$, 
then so is the $\theta^{\pullback}(\Gc)$-orbit of the point $n_0$ in~$N$. 
\end{remark}

\begin{proposition}\label{beta}
The orbit spaces of the groupoids $\theta^{\pullback}(\Gc)$ and $\Gc$ are related by 
$$\beta\colon N/\theta^{\pullback}(\Gc)\to \Gc^{(0)}/\Gc,\quad (\theta^{\pullback}(\Gc)).n\mapsto \Gc.\theta(n)$$
which is a well-defined map and has the following properties:
\begin{enumerate}[(i)]
\item\label{beta_item1} $\beta$ is injective; 
\item\label{beta_item2} if the image of $\theta$ intersects every $\Gc$-orbit, then also $\beta$ is surjective; 
\item\label{beta_item3} if $\Gc$ is a topological groupoid and the map $\theta$ is continuous, 
then $\beta$ is continuous with respect to the quotient toplogies on the orbit spaces; 
\item\label{beta_item4} if $\Gc$ is a topological groupoid whose range map $r\colon \Gc\to \Gc^{(0)}$ is an open map, 
and the map 
$\theta$ is continuous, open, and surjective, 
then $\beta$ is a homeomorphism. 
\end{enumerate}
\end{proposition}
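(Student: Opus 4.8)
The plan is to verify the four assertions in order, since each builds on information established in the preceding remarks about the structure of $\theta^{\pullback}(\Gc)$. First I would check that $\beta$ is well defined: by \eqref{pb_eq2}, the $\theta^{\pullback}(\Gc)$-orbit of $n$ is $\theta^{-1}(\Gc\cdot\theta(n))$, so if $n'$ lies in the same $\theta^{\pullback}(\Gc)$-orbit as $n$, then $\theta(n')\in\Gc\cdot\theta(n)$, whence $\Gc\cdot\theta(n')=\Gc\cdot\theta(n)$; thus the assignment $(\theta^{\pullback}(\Gc))\cdot n\mapsto\Gc\cdot\theta(n)$ does not depend on the representative. For injectivity \eqref{beta_item1}, suppose $\Gc\cdot\theta(n_1)=\Gc\cdot\theta(n_2)$; then $\theta(n_2)\in\Gc\cdot\theta(n_1)$, so $n_2\in\theta^{-1}(\Gc\cdot\theta(n_1))=(\theta^{\pullback}(\Gc))\cdot n_1$ again by \eqref{pb_eq2}, meaning the two $\theta^{\pullback}(\Gc)$-orbits coincide. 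Surjectivity \eqref{beta_item2} is immediate: given a $\Gc$-orbit $\Gc\cdot y$, the hypothesis yields $n\in N$ with $\theta(n)\in\Gc\cdot y$, and then $\beta((\theta^{\pullback}(\Gc))\cdot n)=\Gc\cdot\theta(n)=\Gc\cdot y$.

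For continuity \eqref{beta_item3}, I would use the universal property of quotient topologies. Let $q_N\colon N\to N/\theta^{\pullback}(\Gc)$ and $q_{\Gc}\colon\Gc^{(0)}\to\Gc^{(0)}/\Gc$ be the quotient maps. By construction $\beta\circ q_N=q_{\Gc}\circ\theta$, and the right-hand side is continuous since $\theta$ and $q_{\Gc}$ are; because $q_N$ is a quotient map, this forces $\beta$ to be continuous.

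For \eqref{beta_item4}, the remaining point is that $\beta$ is open (equivalently, that $\beta^{-1}$ is continuous), since a continuous open bijection is a homeomorphism. Here I would exploit that under the stated hypotheses both quotient maps are open: $q_{\Gc}$ is open because the range map $r$ is open (the saturation of an open set $V\subseteq\Gc^{(0)}$ is $r(d^{-1}(V))$, which is open), and likewise $q_N$ is open because $\theta^{\pullback}(\Gc)$ inherits an open range map — indeed, by the earlier remarks its range map factors through the open maps involved, or one argues directly that the $\theta^{\pullback}(\Gc)$-saturation of an open $W\subseteq N$ is $\theta^{-1}(q_{\Gc}^{-1}(q_{\Gc}(\theta(W))))$, open since $\theta$ is continuous and $\theta(W)$ is open (as $\theta$ is open) and $q_{\Gc}$ is open and continuous. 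Given an open $O\subseteq N/\theta^{\pullback}(\Gc)$, the set $q_N^{-1}(O)$ is open and $\theta^{\pullback}(\Gc)$-invariant, so $\theta(q_N^{-1}(O))$ is open in $\Gc^{(0)}$ and $\Gc$-invariant (invariance using \eqref{pb_eq2} together with surjectivity of $\theta$, so that $\theta(q_N^{-1}(O))=q_{\Gc}^{-1}(q_{\Gc}(\theta(q_N^{-1}(O))))$); then $\beta(O)=q_{\Gc}(\theta(q_N^{-1}(O)))$ is open. The main obstacle I anticipate is precisely the bookkeeping in \eqref{beta_item4}: keeping straight which set is a saturation of which, and checking carefully that $\theta$ being open and surjective is exactly what makes the image of an invariant open set both open and invariant downstairs — surjectivity of $\theta$ is what prevents $\theta(q_N^{-1}(O))$ from being a proper non-saturated piece of a $\Gc$-orbit. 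The other three parts are essentially formal consequences of \eqref{pb_eq2}.
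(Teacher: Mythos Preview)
Your proof is correct and follows essentially the same route as the paper: both use \eqref{pb_eq2} for well-definedness, \eqref{beta_item1}, and \eqref{beta_item2}, the commutative square $\beta\circ q_N=q_{\Gc}\circ\theta$ for \eqref{beta_item3}, and openness of $q_{\Gc}$ together with openness of $\theta$ for \eqref{beta_item4}. One minor remark on \eqref{beta_item4}: you do a bit more work than necessary---you do not need $q_N$ to be open, nor do you need the invariance discussion, since from the commutative square and surjectivity of $q_N$ you get directly $\beta(O)=q_{\Gc}\bigl(\theta(q_N^{-1}(O))\bigr)$, and then openness of $\theta$ followed by openness of $q_{\Gc}$ finishes the argument; the paper's proof proceeds in exactly this streamlined way.
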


\begin{proof}
In fact, properties \eqref{beta_item1}--\eqref{beta_item2} follow by \eqref{pb_eq2}. 
For proving property~\eqref{beta_item3}, one needs the commutative diagram 
$$\begin{CD}
N @>{\theta}>> \Gc^{(0)} \\
@VVV @VVV \\
N/\theta^{\pullback}(\Gc) @>{\beta}>> \Gc^{(0)}/\Gc
\end{CD}$$
whose vertical arrows are the quotient maps, and in which the map $\theta$ is continuous. 
For property~\eqref{beta_item4} we also use the above diagram to check that if both 
$\theta$ and the quotient map $\Gc^{(0)}\to \Gc^{(0)}/\Gc$ are open, 
then $\beta$ is an open map, 
and then the assertion follows using also properties \eqref{beta_item1}--\eqref{beta_item3}. 
It remains to note that since the range map $r\colon \Gc\to \Gc^{(0)}$ is an open map, 
the quotient map $q\colon \Gc^{(0)}\to \Gc^{(0)}/\Gc$ is always open.
Indeed, this follows because for every open set $U\subseteq \Gc^{(0)}$ one has 
$q^{-1}(q(U))=r(d^{-1}(U))$ which is open in $\Gc^{(0)}$, hence $q(U)\subseteq \Gc^{(0)}/\Gc$  
is open by the definition of the quotient topology. 
\end{proof}

\subsection*{Morita equivalence}
In the following theorem we use the notion of equivalence of groupoids in the sense of \cite[Def. 2.1]{MRW87}. 
Results similar to this theorem 
were earlier established for instance in \cite[\S~3.3]{Wi15} and the references therein, but we provide   the 
full details of the proof here for the sake of completeness.

\begin{theorem}\label{Morita1}
Assume the following: 
\begin{enumerate}
\item  $\Gc\tto \Gc^{(0)}$ is a locally compact groupoid, 
\item $N$ is a second countable, locally compact topological space, 
\item 
$\theta\colon N\to \Gc^{(0)}$ is a continuous open surjective map.  
\end{enumerate}
Define 
the fibered product 
$$Z:=\Gc \times _{\Gc^{(0)}} N:=\{(g,n)\in \Gc\times N\mid d(g)=\theta(n)\}$$
and the maps $\rho\colon Z\to\Gc^{(0)}$, $\rho(g,n)=r(g)$, 
and $\sigma\colon Z\to N$, $\sigma(g,n)=n$. 
We also define a left action of the groupoid $\Gc\tto\Gc^{(0)}$ on $Z$ by 
$$(h,(g,n))\mapsto h\cdot (g,n):=(hg,n)\text{ if }d(h)=\rho(g,n),$$ 
and a right action of the groupoid $\theta^{\pullback}(\Gc)\tto N$ on $Z$ 
by 
$$((g,n),(n_2,h,n_1))\mapsto(g,n)\cdot (n_2,h,n_1):=(gh,n_1)\text{ if }\sigma(g,n)=r(n_2,h,n_1).$$ 
Then $Z$ is a $(\Gc,\theta^{\pullback}(\Gc))$-equivalence. 
\end{theorem}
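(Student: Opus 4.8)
The plan is to verify the axioms of a groupoid equivalence (in the sense of \cite[Def. 2.1]{MRW87}) for $Z$ step by step. Recall that we must check: (a) the maps $\rho$ and $\sigma$ are open surjections onto $\Gc^{(0)}$ and $N$ respectively; (b) the left $\Gc$-action is free and proper and $\rho$ induces a homeomorphism $Z/\theta^{\pullback}(\Gc)\xrightarrow{\sim}\Gc^{(0)}$; (c) the right $\theta^{\pullback}(\Gc)$-action is free and proper and $\sigma$ induces a homeomorphism $\Gc\backslash Z\xrightarrow{\sim}N$; (d) the two actions commute; and (e) $\rho$ is invariant under the right action while $\sigma$ is invariant under the left action. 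First I would record the elementary topological facts: $Z=\Gc\times_{\Gc^{(0)}}N$ is a closed subset of $\Gc\times N$, hence locally compact and second countable; the map $\sigma(g,n)=n$ is the restriction to $Z$ of the projection and is surjective because $\theta$ is surjective and $d$ has a section through units (given $n$, take $(\theta(n),n)\in Z$, using the unit at $\theta(n)$); openness of $\sigma$ follows from openness of $d$ (pullbacks of open maps along continuous maps are open). Similarly $\rho(g,n)=r(g)$ is surjective because $r$ is surjective and $\theta$ is surjective, and $\rho$ is open because it factors through the projection $Z\to\Gc$, $(g,n)\mapsto g$ — which is open since it is a pullback of the open map $\theta$ along $d$ — followed by the open map $r$.

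Next I would check the action axioms and the commuting property, which are purely algebraic bookkeeping: for $h\in\Gc$ with $d(h)=\rho(g,n)=r(g)$ the product $hg$ is defined and $d(hg)=d(g)=\theta(n)$, so $h\cdot(g,n)=(hg,n)\in Z$; for $(n_2,h,n_1)\in\theta^{\pullback}(\Gc)$ with $\sigma(g,n)=n=n_2$ one has $r(h)=\theta(n_2)=\theta(n)=d(g)$, so $gh$ is defined and $(gh,n_1)\in Z$ since $d(gh)=d(h)=\theta(n_1)$. That the two actions commute is the associativity identity $(hg)h'=h(gh')$ in $\Gc$ together with matching the $N$-coordinates. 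The invariance statements $\rho(h\cdot z\cdot\gamma)=\rho(z)$ is false — rather one needs $\rho((g,n)\cdot(n_2,h,n_1))=\rho(g,n)$, i.e. $r(gh)=r(g)$, which holds, and $\sigma(h\cdot(g,n))=\sigma(g,n)$, i.e. the $N$-coordinate is unchanged by the left action, which is immediate.

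Then I would verify freeness and properness of both actions. Freeness of the right action: if $(g,n)\cdot(n_2,h,n_1)=(g,n)$ then $gh=g$ and $n_1=n$, so $h=d(g)$ is a unit and $n_2=r(n_2,h,n_1)=\sigma(g,n)=n=n_1$, forcing $(n_2,h,n_1)$ to be a unit of $\theta^{\pullback}(\Gc)$. Freeness of the left action: $h\cdot(g,n)=(g,n)$ gives $hg=g$, so $h=r(g)$ is a unit. For properness, the standard device is to exhibit the map $Z\times_N\theta^{\pullback}(\Gc)\to Z\times Z$, $((g,n),\gamma)\mapsto((g,n),(g,n)\cdot\gamma)$ as a homeomorphism onto a closed subset (and symmetrically for the left action); here the inverse is obtained by solving for $\gamma$, using that $\Gc$ is a groupoid so that from $(g,n)$ and $(g',n')$ with $n=n'$ in the appropriate sense one recovers $h=g^{-1}g'$. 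The identifications $Z/\theta^{\pullback}(\Gc)\cong\Gc^{(0)}$ via $\rho$ and $\Gc\backslash Z\cong N$ via $\sigma$ come down to: two points $(g,n),(g',n')$ are in the same right orbit iff $r(g)=r(g')$ (because then $h:=g^{-1}g'\in\Gc_{\theta(n')}^{\theta(n)}$ gives the connecting element $(n,h,n')$), and in the same left orbit iff $n=n'$ (because then $h:=g'g^{-1}$ connects them); continuity of the inverse maps uses openness of $\theta$, $d$, $r$ and the fact (Proposition~\ref{beta}) that such quotient maps are open. The main obstacle I anticipate is the properness verification: openness of the relevant structure maps is needed to ensure the orbit equivalence relations are closed and the canonical bijections on orbit spaces are homeomorphisms, and assembling these continuity-of-inverse arguments cleanly — rather than any single deep idea — is where the care is required; everything else is diagram-chasing in the groupoid $\Gc$.
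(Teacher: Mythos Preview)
Your proposal is correct and follows essentially the same route as the paper: you verify the axioms of \cite[Def.~2.1]{MRW87} via the same ingredients the paper isolates as lemmas --- openness of $\rho$ and $\sigma$ by the ``pullback of an open map is open'' principle (the paper's Lemma~\ref{projs}/\ref{opens}), well-definedness and commutation of the two actions by groupoid associativity, freeness by cancellation in $\Gc$, properness by exhibiting the action maps as homeomorphisms onto their (closed) images, and the orbit-space bijections $Z/\theta^{\pullback}(\Gc)\cong\Gc^{(0)}$ and $\Gc\backslash Z\cong N$ via the explicit formulas $h=g^{-1}g'$ and $h=g'g^{-1}$. The only minor difference is that the paper, following \cite{MRW87}, records the orbit-space identifications merely as bijections (conditions (iv)--(v) there), so your concern about continuity of the inverse maps is not actually needed.
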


The proof is based on several lemmas. 

\begin{lemma}\label{welld}
The left and right actions on $Z$ from the statement are well defined. 
\end{lemma}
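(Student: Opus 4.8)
\textbf{Plan for the proof of Lemma~\ref{welld}.}
The statement to be proved is that the two formulas
$h\cdot(g,n)=(hg,n)$ and $(g,n)\cdot(n_2,h,n_1)=(gh,n_1)$ actually define actions of $\Gc$ and of $\theta^{\pullback}(\Gc)$ on $Z=\Gc\times_{\Gc^{(0)}}N$. There is nothing deep here; the point is purely bookkeeping with the domain/range maps, so I will simply verify that each product lands back in $Z$ and that the composability conditions are the right ones.

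\emph{Left action.} Suppose $(g,n)\in Z$, so $d(g)=\theta(n)$, and suppose $h\in\Gc$ with $d(h)=\rho(g,n)=r(g)$. Then $(h,g)\in\Gc^{(2)}$, so $hg$ is defined in $\Gc$, and $d(hg)=d(g)=\theta(n)$; hence $(hg,n)\in Z$. Thus $h\cdot(g,n)$ is well defined as an element of $Z$ whenever $d(h)=\rho(g,n)$. One also checks compatibility with the groupoid structure: $\rho(h\cdot(g,n))=r(hg)=r(h)$, and if $d(h')=r(h)=\rho(h\cdot(g,n))$ then $h'\cdot(h\cdot(g,n))=(h'(hg),n)=((h'h)g,n)=(h'h)\cdot(g,n)$, using associativity in $\Gc$; finally $r(g)\cdot(g,n)=(r(g)g,n)=(g,n)$ for the identity morphism $r(g)\in\Gc^{(0)}\subseteq\Gc$. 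So this is a genuine left action of $\Gc$ on $Z$ along $\rho$.

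\emph{Right action.} Suppose $(g,n)\in Z$ and $(n_2,h,n_1)\in\theta^{\pullback}(\Gc)$ with $\sigma(g,n)=r(n_2,h,n_1)$, i.e.\ $n=n_2$. By definition of the pullback, $(n_2,h,n_1)\in\theta^{\pullback}(\Gc)$ means $h\in\Gc_{\theta(n_1)}^{\theta(n_2)}$, that is $d(h)=\theta(n_1)$ and $r(h)=\theta(n_2)=\theta(n)=d(g)$. Hence $(g,h)\in\Gc^{(2)}$, so $gh$ is defined, and $d(gh)=d(h)=\theta(n_1)$; therefore $(gh,n_1)\in Z$, and $(g,n)\cdot(n_2,h,n_1)$ is well defined as an element of $Z$. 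Associativity and the unit axiom for this right action follow the same way: $\sigma((g,n)\cdot(n_2,h,n_1))=n_1$, which matches the range of any $\theta^{\pullback}(\Gc)$-morphism $(n_1,h',n_0)$ composable on the right, and then $((g,n)\cdot(n_2,h,n_1))\cdot(n_1,h',n_0)=(g(hh'),n_0)=(g,n)\cdot(n_2,hh',n_0)=(g,n)\cdot\big((n_2,h,n_1)(n_1,h',n_0)\big)$ by associativity in $\Gc$ and the definition of composition in $\theta^{\pullback}(\Gc)$; the unit $(n,n,n)\in\theta^{\pullback}(\Gc)$ (where the middle entry is the identity at $\theta(n)$) acts trivially since $\rho(g,n)$ is unchanged and $g\,\theta(n)=g$. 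Finally the left $\Gc$-action and the right $\theta^{\pullback}(\Gc)$-action commute, because $\big(h\cdot(g,n)\big)\cdot(n_2,h',n_1)=((hg)h',n_1)=(h(gh'),n_1)=h\cdot\big((g,n)\cdot(n_2,h',n_1)\big)$, again by associativity in $\Gc$, and the relevant composability conditions ($d(h)=r(g)$ on one side, $n=n_2$ on the other) are independent of each other.

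\emph{Expected obstacle.} There is essentially no obstacle: everything reduces to matching $d$ and $r$ and invoking associativity of composition in $\Gc$. The only thing requiring a moment's care is keeping the roles of the two $N$-coordinates straight in $\theta^{\pullback}(\Gc)$ (that $r(n_2,h,n_1)=n_2$ is the one that must equal $\sigma(g,n)=n$, not $n_1$), and recalling that membership in $\theta^{\pullback}(\Gc)$ already encodes $d(h)=\theta(n_1)$, $r(h)=\theta(n_2)$, which is exactly what makes $gh$ composable in $\Gc$ and the result land in $Z$.
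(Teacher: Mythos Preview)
Your proof is correct and follows exactly the same approach as the paper: you verify that $d(h)=\rho(g,n)$ forces $d(h)=r(g)$ so that $hg$ is defined with $d(hg)=\theta(n)$, and that $\sigma(g,n)=r(n_2,h,n_1)$ forces $n=n_2$ and $r(h)=d(g)$ so that $gh$ is defined with $d(gh)=\theta(n_1)$. You go further than the paper by also checking associativity, units, and commutativity of the two actions, which is fine but not strictly required here (the paper defers the commutativity check to the proof of Theorem~\ref{Morita1}).
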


\begin{proof}
For the left action, the condition $d(h)=\rho(g,n)$ means $d(h)=r(g)$, hence $hg\in\Gc$ is well defined 
and moreover $d(hg)=d(g)=\theta(n)$, hence $h\cdot (g,n)=(hg,n)\in Z$. 
For the right action, the condition $r(n_2,h,n_1)=\sigma(g,n)$ means $n_2=n$, 
hence $r(h)=\theta(n_2)=\theta(n)=d(g)$, and then $gh\in\Gc$ is well defined. 
Moreover $d(gh)=d(h)=\theta(n_1)$, hence indeed $(g,n)\cdot (n_2,h,n_1)=(gh,n_1)\in Z$. 
\end{proof}

\begin{lemma}\label{projs}
Let $\varphi\colon X\to A$ and $\psi\colon Y\to A$ be continuous maps,  
and define $B:=\{(x,y)\in X\times Y\mid \varphi(x)=\psi(y)\}\subseteq X\times Y$. 
If $\psi$ is open and surjective, 
then the Cartesian projection $p_1\colon B\to X$, $(x,y)\mapsto x$, is an open surjective map. 
\end{lemma}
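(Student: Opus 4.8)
The plan is a direct verification. First I would show surjectivity of $p_1$: given $x\in X$, since $\varphi$ maps into $A$ and $\psi$ is surjective, there is $y\in Y$ with $\psi(y)=\varphi(x)$, so $(x,y)\in B$ and $p_1(x,y)=x$. (No openness of $\psi$ is needed here, only surjectivity.) The bulk of the argument is openness of $p_1$. A basic open subset of $B$ has the form $(U\times V)\cap B$ with $U\subseteq X$ and $V\subseteq Y$ open, so it suffices to prove that $p_1((U\times V)\cap B)$ is open in $X$. I would first compute, just as in the proofs of Lemma~\ref{bund2} and Lemma~\ref{bund1} above,
$$p_1((U\times V)\cap B)=\{x\in U\mid (\exists y\in V)\ \varphi(x)=\psi(y)\}=U\cap\varphi^{-1}(\psi(V)).$$
Since $\psi$ is open, $\psi(V)$ is open in $A$; since $\varphi$ is continuous, $\varphi^{-1}(\psi(V))$ is open in $X$; hence the displayed set is open in $X$, being the intersection of two open sets. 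This handles basic opens, and since $p_1$ of a union is the union of $p_1$'s, it follows that $p_1$ carries arbitrary open subsets of $B$ to open subsets of $X$.

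I do not anticipate a genuine obstacle here; the only point requiring a moment's care is the set-theoretic identity for $p_1((U\times V)\cap B)$, namely checking both inclusions between $\{x\in U\mid (\exists y\in V)\ \varphi(x)=\psi(y)\}$ and $U\cap\varphi^{-1}(\psi(V))$, which is immediate once unwound. Everything else is a one-line application of continuity of $\varphi$, openness of $\psi$, and the definition of the subspace topology on $B\subseteq X\times Y$. The surjectivity of $\psi$ is used only to get surjectivity of $p_1$, and its openness only to get openness of $p_1$, so both hypotheses are genuinely used exactly once each.
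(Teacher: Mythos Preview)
Your proof is correct, but it takes a different route from the paper's. The paper argues via the \emph{limit covering property}: a continuous surjection is open precisely when every net $x_j\to x$ in the target, together with a preimage point of $x$, can be lifted along a subnet to a convergent net of preimages. The paper applies this characterization first to $\psi$ (to lift $\varphi(x_j)\to\psi(y)$ to a net $y_i\to y$ with $\psi(y_i)=\varphi(x_i)$) and then reads off the same property for~$p_1$.

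Your argument is the more elementary, purely set-theoretic one: you work with basic open sets $(U\times V)\cap B$ of the subspace topology and identify $p_1((U\times V)\cap B)=U\cap\varphi^{-1}(\psi(V))$, which is open because $\psi$ is open and $\varphi$ is continuous. This avoids nets entirely and is shorter. The paper's net formulation, on the other hand, packages openness in a form that is sometimes more convenient in the groupoid context (e.g., in the proofs of Lemmas~\ref{opens}--\ref{rightprinc}), but for this particular lemma your direct computation is arguably cleaner and equally rigorous.
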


\begin{proof} 
Since $\psi$ is surjective, it is clear that also $p_1$ is surjective. 
Thus $p_1$ is continuous and surjective and then,  
reasoning by contradiction, 
it is straightforward to check that the map $p_1$ is open if and only if it has the limit covering property, 
that is, whenever $\lim\limits_{j\in J}x_j=x$ in~$X$ and $(x,y)\in B$,  
there exist a subnet $\{x_i\}_{i\in I}$ and a net $\{y_i\}_{i\in I}$ with $(x_i,y_i)\in B$ for every $i\in I$ 
and $\lim\limits_{i\in I}y_i=y$ in~$Y$. 

Since $(x,y)\in B$, we have $\varphi(x)=\psi(y)$, 
and thus continuity of $\varphi$ implies 
$\lim\limits_{j\in J}\varphi(x_j)=\psi(y)\in A$. 
Now, using the aforementioned limit covering property for the continuous open surjective map $\psi\colon Y\to A$, 
we obtain a subnet $\{x_i\}_{i\in I}$ of $\{x_j\}_{j\in J}$ and a net $\{y_i\}_{i\in I}$ in $Y$ with $\lim\limits_{i\in I}y_i=y$ and $\psi(y_i)=\varphi(x_i)$ for every $i\in I$. 
Then $(x_i,y_i)\in B$ for every $i\in I$, and thus the limit covering property of the Cartesian projection $p_1$ indeed holds true, which completes the proof. 
\end{proof}

\begin{lemma}\label{opens}
Both $\rho\colon Z\to\Gc^{(0)}$ and $\sigma\colon Z\to N$ are open maps. 
\end{lemma}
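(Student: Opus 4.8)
The plan is to obtain both assertions as immediate consequences of Lemma~\ref{projs}, applied to the fibered product $Z=\Gc\times_{\Gc^{(0)}}N=\{(g,n)\in\Gc\times N\mid d(g)=\theta(n)\}$, combined with the standing assumptions that the domain map $d$ (hence the range map $r=d\circ\iota$) is open and that $\theta$ is continuous, open, and surjective.

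First I would treat $\rho\colon Z\to\Gc^{(0)}$, $(g,n)\mapsto r(g)$. Writing $p_1\colon Z\to\Gc$, $(g,n)\mapsto g$, for the Cartesian projection onto the first factor, one has $\rho=r\circ p_1$. Now $Z$ is exactly the set $B$ of Lemma~\ref{projs} with $X=\Gc$, $Y=N$, $A=\Gc^{(0)}$, $\varphi=d$, and $\psi=\theta$; since $\theta$ is open and surjective, that lemma yields that $p_1$ is an open (and surjective) map. As $r\colon\Gc\to\Gc^{(0)}$ is open by the standing hypotheses, $\rho$ is a composition of open maps, hence open (and surjective, $r$ being surjective).

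For $\sigma\colon Z\to N$, $(g,n)\mapsto n$, I would apply Lemma~\ref{projs} with the roles of the two factors interchanged: here $\sigma$ is the projection of $B$ onto $Y=N$, and the relevant hypothesis becomes that $\varphi=d\colon\Gc\to\Gc^{(0)}$ is open and surjective. Openness of $d$ is a standing assumption, and $d$ is surjective because every object $x\in\Gc^{(0)}$ is the range and domain of its own identity morphism, so $x=d(x)$ under the canonical inclusion $\Gc^{(0)}\hookrightarrow\Gc$. Hence the (symmetric form of the) lemma shows that $\sigma$ is open.

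I expect no serious obstacle here; the only point requiring a moment's care is that Lemma~\ref{projs}, although stated for the projection $p_1$, is symmetric in $X$ and $Y$, so it equally yields openness of the projection onto $Y$ under the symmetric hypothesis on $\varphi$, which is precisely what is needed for $\sigma$. One could alternatively restate Lemma~\ref{projs} in symmetric form, or simply re-run its short limit-covering argument with the two factors swapped.
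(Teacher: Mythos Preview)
Your proof is correct and follows essentially the same approach as the paper's: both arguments invoke Lemma~\ref{projs} twice (once directly with $\psi=\theta$ to get $p_1$ open, whence $\rho=r\circ p_1$ is open; once in its symmetric form with $d$ open and surjective to get $\sigma$ open). The only differences are the order in which the two maps are treated and your explicit remark about the symmetry of Lemma~\ref{projs}, which the paper uses silently.
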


\begin{proof}
The map $\sigma\colon Z=\Gc \times_{\Gc^{(0)}} N\to N$ is exactly the Cartesian projection on the second factor. 
Since the map $d\colon \Gc \to{\Gc^{(0)}}$ is open and surjective, it then follows by Lemma~\ref{projs} that $\sigma$ is an open map. 

On the other hand, since the map $\theta\colon N\to \Gc^{(0)}$ is continuous, open and surjective, 
the Cartesian projection on the first factor 
$p_1\colon Z=\Gc \times_{\Gc^{(0)}} N\to \Gc$ is an open map, 
by Lemma~\ref{projs} again. 
As  $r\colon \Gc \to{\Gc^{(0)}}$ is an open map by the definition of a topological groupoid, 
it follows that their composition $r\circ p_1=\rho$ is an open map. 
\end{proof}

We now use terminology from \cite[Sect. 2]{MRW87} again. 

\begin{lemma}\label{leftprinc}
$Z$ is a left principal $\Gc$-space.  
\end{lemma}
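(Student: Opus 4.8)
The goal is to verify that $Z$ is a left principal $\Gc$-space in the sense of \cite[Sect.~2]{MRW87}, which means checking three things: that the action map is such that the momentum map $\rho\colon Z\to\Gc^{(0)}$ is open (done in Lemma~\ref{opens}); that the $\Gc$-action on $Z$ is \emph{free}; and that the $\Gc$-action on $Z$ is \emph{proper}. First I would record the bookkeeping: the action is $h\cdot(g,n)=(hg,n)$ defined when $d(h)=r(g)$, and $\rho$ is the momentum map with $\rho(h\cdot(g,n))=r(hg)=r(h)$, so the action covers the canonical left action of $\Gc$ on $\Gc^{(0)}$ via $\rho$. Openness of $\rho$ is already established in Lemma~\ref{opens}, so that part is immediate.

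For freeness: suppose $h\cdot(g,n)=(g,n)$ with $d(h)=r(g)$. Then $(hg,n)=(g,n)$, so $hg=g$ in $\Gc$, and cancelling on the right (legitimate since $g$ is invertible in the groupoid) gives $h=r(g)\in\Gc^{(0)}$, the identity morphism at $r(g)$. Hence the stabiliser of every point of $Z$ is trivial, i.e. the action is free.

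For properness I would exhibit continuity of the map $Z\times_{\Gc^{(0)}}Z\to\Gc$ sending a pair $\bigl((g_1,n_1),(g_2,n_2)\bigr)$ with $\rho(g_1,n_1)=\rho(g_2,n_2)$, equivalently $r(g_1)=r(g_2)$, to the unique $h\in\Gc$ with $h\cdot(g_2,n_2)=(g_1,n_1)$; freeness guarantees uniqueness when such an $h$ exists, but here one must be careful: such $h$ exists only when additionally $n_1=n_2$. The standard way to phrase properness for a free action is that the map $\Gc\ast Z\to Z\times Z$, $(h,z)\mapsto(h\cdot z,z)$, is a homeomorphism onto its (closed) image and is proper. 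So I would show that the image of $(h,(g,n))\mapsto\bigl((hg,n),(g,n)\bigr)$ inside $Z\times Z$ consists exactly of pairs $\bigl((g',n),(g,n)\bigr)$ with $d(g')=d(g)=\theta(n)$ (same second coordinate) and $r(g')=r(\text{nothing constrained})$; on this locus one recovers $h=g'g^{-1}$, which depends continuously on $(g',g)$ since composition and inversion in $\Gc$ are continuous. Then properness of the action reduces to the fact that the subset $\{((g',n),(g,n))\}$ is closed in $Z\times Z$ — which holds because it is the preimage of the diagonal of $N\times N$ under the continuous map $\sigma\times\sigma$ — together with properness of inversion/multiplication; more precisely, for any compact $L\subseteq Z\times Z$ its preimage in $\Gc\ast Z$ is closed and contained in $(g'g^{-1}\text{-image of }L)\times(\text{second-coordinate projection of }L)$, hence compact.

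\textbf{Main obstacle.} The routine parts (freeness, openness of $\rho$) are immediate; the delicate point is properness, where one must correctly identify the ``translation groupoid'' map and verify its properness without tacitly assuming $\Gc$ itself acts properly on something — the key leverage is that $Z$ carries the second coordinate $\sigma\colon Z\to N$, the $\Gc$-action fixes it, and $\sigma$ separates the $\Gc$-orbits enough that the translation map factors through the continuous operation $(g',g)\mapsto g'g^{-1}$ on the fibered product $\Gc\times_{\Gc^{(0)}}\Gc$. I expect the write-up to spell out exactly this factorisation and then invoke continuity of the groupoid operations to conclude that $(h,z)\mapsto(h\cdot z,z)$ is a homeomorphism onto a closed subset of $Z\times Z$, which is the form of properness used in \cite{MRW87}.
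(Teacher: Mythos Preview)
Your proposal is correct and follows essentially the same route as the paper: openness of $\rho$ from Lemma~\ref{opens}, freeness via $hg=g\Rightarrow h=r(g)$, and properness by showing that $\Phi\colon(h,(g,n))\mapsto((hg,n),(g,n))$ is a homeomorphism onto its image in $Z\times Z$. You are in fact slightly more careful than the paper, which simply asserts ``$\Phi$ is injective and is a homeomorphism onto its image, hence in particular $\Phi$ is a proper map'' without explicitly noting that the image is closed; your observation that the image is $(\sigma\times\sigma)^{-1}(\Delta_N)$ (since $(g',n),(g,n)\in Z$ forces $d(g')=\theta(n)=d(g)$, so the only constraint is equality of the $N$-coordinate) and that the inverse is the continuous map $((g',n),(g,n))\mapsto(g'g^{-1},(g,n))$ fills exactly that small gap.
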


\begin{proof}
It follows by Lemma~\ref{opens} that the structural map $\rho\colon Z\to\Gc^{(0)}$ is open, 
so $Z$ is a $\Gc$-space. 
Then we must check that the following conditions are satisfied, 
where $\Gc\times_{\Gc^{(0)}}Z=\{(g,z)\in\Gc\times Z\mid d(g)=\rho(z)\}$: 
\begin{enumerate}
\item The action of $\Gc$ on $Z$ is free, that is, if $(h,z)\in\Gc\times_{\Gc^{(0)}}Z$ 
and $h\cdot z=z$, then $h\in\Gc^{(0)}$. 
\item The action of $\Gc$ on $Z$ is proper, that is, the map $\Phi\colon\Gc\times_{\Gc^{(0)}}Z\to Z\times Z$, 
$(h,z)\mapsto (h\cdot z,z)$ is proper. 
\end{enumerate}
For the first condition, let $z=(g,n)\in Z$ and $h\in\Gc$ with $d(h)=\rho(z)$ and $h.z=z$. 
One has $\rho(z)=r(g)$, hence $d(h)=r(g)$. 
Also $(hg,n)=h\cdot z=z=(g,n)$, hence $hg=g$, and this implies $h=r(g)\in\Gc^{(0)}$. 

For the second of the above conditions, 
we first describe the set $\Gc\times_{\Gc^{(0)}}Z\subseteq\Gc\times Z\subseteq\Gc\times\Gc\times N$. 
If $h,g\in\Gc$ and $n\in N$, then one has 
$(h,(g,n))\in \Gc\times_{\Gc^{(0)}}Z$ if and only if 
$(g,n)\in Z$ and $d(h)=\rho(g,n)$, which is equivalent to $d(g)=\theta(n)$ and $d(h)=r(g)$. 
Hence 
$$\Gc\times_{\Gc^{(0)}}Z=\{(h,g,n)\in\Gc\times\Gc\times N\mid d(g)=\theta(n)\text{ and }d(h)=r(g)\}.$$
Moreover, for every $(h,g,n)\in \Gc\times_{\Gc^{(0)}}Z$, one has 
$$\Phi(h,g,n)=(h.(g,n),(g,n))=((hg,n),(g,n))\in Z\times Z.$$ 
This shows that $\Phi$ is injective and is a homeomorphism onto its image, 
hence in particular $\Phi$ is a proper map. 
\end{proof}

\begin{lemma}\label{rightprinc}
$Z$ is a right principal $\theta^{\pullback}(\Gc)$-space.  
\end{lemma}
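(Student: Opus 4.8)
The plan is to mirror the structure of the proof of Lemma~\ref{leftprinc}, verifying the two conditions required for $Z$ to be a right principal $\theta^{\pullback}(\Gc)$-space: that the structural map $\sigma\colon Z\to N$ is open (so $Z$ is indeed a right $\theta^{\pullback}(\Gc)$-space), and that the right action is free and proper. The openness of $\sigma$ is already established in Lemma~\ref{opens}, so the work lies in the freeness and properness.

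First I would check freeness. Suppose $z=(g,n)\in Z$ and $(n_2,h,n_1)\in\theta^{\pullback}(\Gc)$ satisfy $\sigma(z)=r(n_2,h,n_1)$ and $z\cdot(n_2,h,n_1)=z$. The matching condition forces $n_2=n$, and then as in Lemma~\ref{welld} one gets $r(h)=\theta(n)=d(g)$; the equality $(gh,n_1)=z\cdot(n_2,h,n_1)=z=(g,n)$ gives $n_1=n$ and $gh=g$, whence $h=d(g)\in\Gc^{(0)}$. Therefore $(n_2,h,n_1)=(n,\theta(n),n)$, which is a unit of $\theta^{\pullback}(\Gc)$ (it acts as the identity on $z$), so the action is free.

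Next I would verify properness, that the map $\Psi\colon Z\times_N\theta^{\pullback}(\Gc)\to Z\times Z$, $(z,\gamma)\mapsto(z\cdot\gamma,z)$, is proper, where $Z\times_N\theta^{\pullback}(\Gc)=\{((g,n),(n_2,h,n_1))\mid \sigma(g,n)=r(n_2,h,n_1)\}$. Spelling out the matching conditions, an element here is determined by data $(g,h,n_1)\in\Gc\times\Gc\times N$ with $d(g)=\theta(r(h))$ (equivalently $r(h)=\theta(n)$ where $n$ is forced) and $d(h)=\theta(n_1)$, and under $\Psi$ it maps to $((gh,n_1),(g,n))$. As in Lemma~\ref{leftprinc}, from the image one recovers $g$, $n$, $n_1$, and then $h$ via $h=g^{-1}(gh)$, so $\Psi$ is injective; moreover it is a homeomorphism onto its image, since the inverse on the image is continuous (it is built from the continuous groupoid operations of $\Gc$ together with the coordinate projections). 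An injective continuous map which is a homeomorphism onto its image is automatically proper, so properness follows. I expect the main obstacle to be bookkeeping: keeping the three middle coordinates $n_2=n$, $n_1$, and the spurious $n$ appearing in $Z$ straight, and checking carefully that the candidate inverse of $\Psi$ on its image is genuinely continuous rather than merely a set-theoretic inverse.

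Assembling these, $Z$ satisfies both conditions of \cite[Sect.~2]{MRW87} for a right principal $\theta^{\pullback}(\Gc)$-space, which is the assertion of the lemma.
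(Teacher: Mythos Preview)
Your proposal is correct and follows essentially the same approach as the paper's proof: invoke Lemma~\ref{opens} for openness of $\sigma$, verify freeness by the same computation, and establish properness by showing $\Psi$ is a homeomorphism onto its image. One small slip: the phrase ``$n$ is forced'' and the condition ``$d(g)=\theta(r(h))$'' are garbled (the latter does not typecheck, and $n$ is genuinely part of the data since $\theta$ need not be injective), but this does not affect your argument because you correctly recover $n$ from the image $((gh,n_1),(g,n))$ when checking injectivity and continuity of the inverse.
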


\begin{proof}
As in the proof of Lemma~\ref{leftprinc}, 
it follows by Lemma~\ref{opens} that the structural map $\sigma\colon Z\to(\theta^{\pullback}(\Gc))^{(0)}=N$ is open, 
so $Z$ is a $\theta^{\pullback}(\Gc)$-space. 
Now we will check that the following conditions are satisfied, 
where $Z\times_N\theta^{\pullback}(\Gc)=\{(z,\gamma)\in Z\times \theta^{\pullback}(\Gc)\mid \sigma(z)=r(\gamma)\}$: 
\begin{enumerate}
\item The action of $\theta^{\pullback}(\Gc)$ on $Z$ is free, that is, if $(z,\gamma)\in Z\times_N\theta^{\pullback}(\Gc)$ 
and $z\cdot \gamma=z$, then $\gamma\in(\theta^{\pullback}(\Gc))^{(0)}$. 
\item The action of $\theta^{\pullback}(\Gc)$ on $Z$ is proper, that is, 
the map $\Psi\colon Z\times_N\theta^{\pullback}(\Gc)\to Z\times Z$, 
$(z,\gamma)\mapsto (z\cdot \gamma,z)$ is proper. 
\end{enumerate}
For the first condition, let $z=(g,n)\in Z$ and $\gamma=(n_2,h,n_1)\in\theta^{\pullback}(\Gc)$ with $\sigma(z)=r(\gamma)$ and $z\cdot \gamma=z$. 
One has $\sigma(z)=n$ and $r(\gamma)=n_2$, hence $n=n_2$. 
Also $(gh,n_1)=z\cdot \gamma=z=(g,n)$, hence $n=n_1$ and $gh=g$. 
This implies $n_1=n_2=n$ and $h=d(g)\in\Gc^{(0)}$, hence $\gamma=(n_2,h,n_1)\in(\theta^{\pullback}(\Gc))^{(0)}$. 

For the second of the above conditions, 
we first describe the set $Z\times_N\theta^{\pullback}(\Gc)\subseteq Z\times\theta^{\pullback}(\Gc)
\subseteq(\Gc\times N)\times(N\times\Gc\times N)$. 
If $h,g\in\Gc$ and $n,n_1,n_2\in N$, then one has 
$((g,n),(n_2,h,n_1))\in Z\times_N\theta^{\pullback}(\Gc)$ if and only if 
$(g,n)\in Z$, $(n_2,h,n_1))\in\theta^{\pullback}(\Gc)$, 
 and $\sigma(g,n)=r(n_2,h,n_1)$.  
This is equivalent to $d(g)=\theta(n)$, $d(h)=\theta(n_1)$, $r(h)=\theta(n_2)$, and $n=n_2$. 
Hence 
$$\begin{aligned}
Z\times_N\theta^{\pullback}(\Gc)=\{( & g,n,n,h,n_1)\in\Gc\times N\times N\times\Gc\times N\mid \\
& d(g)=r(h)=\theta(n)\text{ and } d(h)=\theta(n_1)\}.
\end{aligned}$$
Moreover, for every $(g,n,n,h,n_1)\in Z\times_N\theta^{\pullback}(\Gc)$, one has 
$$\Psi(g,n,n,h,n_1)=((g,n)\cdot (n,h,n_1),(g,n))=((gh,n_1),(g,n))\in Z\times Z.$$ 
This shows that $\Psi$ is injective and is a homeomorphism onto its image, 
hence $\Psi$ is a proper map. 
\end{proof}

\begin{lemma}\label{rightleaves}
The map 
$\tilde\rho\colon Z/\theta^{\pullback}(\Gc)\to \Gc^{(0)}$,
 $z\cdot \theta^{\pullback}(\Gc)\mapsto \rho(z)$ is well defined and is bijective.   
\end{lemma}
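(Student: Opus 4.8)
The plan is to verify directly that $\tilde\rho$ is well defined, injective, and surjective, using the explicit description of the right $\theta^{\pullback}(\Gc)$-action on $Z=\Gc\times_{\Gc^{(0)}}N$ together with the orbit computation \eqref{pb_eq2}. The key point is that $\rho(g,n)=r(g)$ is unchanged by the action, and that two points of $Z$ with the same image under $\rho$ are forced to lie in one $\theta^{\pullback}(\Gc)$-orbit because $\theta$ is surjective onto each $\Gc$-orbit.

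First I would check that $\tilde\rho$ is well defined, i.e.\ that $\rho$ is constant on right orbits. If $z=(g,n)\in Z$ and $\gamma=(n,h,n_1)\in\theta^{\pullback}(\Gc)$ with $\sigma(z)=r(\gamma)$, then $z\cdot\gamma=(gh,n_1)$, and $\rho(z\cdot\gamma)=r(gh)=r(g)=\rho(z)$ since $d(g)=r(h)$. Hence $\rho$ descends to $\tilde\rho$ on $Z/\theta^{\pullback}(\Gc)$. Surjectivity is immediate: for any $x\in\Gc^{(0)}$, since $\theta$ is surjective pick $n$ with $\theta(n)=d(h)$ for a suitable $h$ — more simply, take any $g\in\Gc$ with $r(g)=x$ and any $n\in N$ with $\theta(n)=d(g)$ (such $n$ exists by surjectivity of $\theta$); then $(g,n)\in Z$ and $\tilde\rho$ of its orbit is $r(g)=x$.

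The main work is injectivity. Suppose $z_1=(g_1,n_1)$ and $z_2=(g_2,n_2)$ are in $Z$ with $\rho(z_1)=\rho(z_2)$, i.e.\ $r(g_1)=r(g_2)=:x$. I would produce $\gamma\in\theta^{\pullback}(\Gc)$ with $z_1\cdot\gamma=z_2$. The natural candidate is $\gamma=(n_1,g_1^{-1}g_2,n_2)$: one checks $d(g_1^{-1}g_2)=d(g_2)=\theta(n_2)$ and $r(g_1^{-1}g_2)=d(g_1)=\theta(n_1)$, so $\gamma\in\theta^{\pullback}(\Gc)$; and $\sigma(z_1)=n_1=r(\gamma)$, while $z_1\cdot\gamma=(g_1\cdot g_1^{-1}g_2,n_2)=(g_2,n_2)=z_2$. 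So the two orbits coincide and $\tilde\rho$ is injective. I do not expect any genuine obstacle here — the statement is a purely algebraic bookkeeping fact about the pullback groupoid, entirely parallel to the classical fact that in a transitive action the quotient of the graph by one of the two principal actions recovers the unit space of the other groupoid; the only thing to be careful about is matching the domain/range conventions for $\theta^{\pullback}(\Gc)$ as fixed in the definition (namely $d(n_2,g,n_1)=n_1$, $r(n_2,g,n_1)=n_2$) so that $\gamma$ is written with its coordinates in the correct order.
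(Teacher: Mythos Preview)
Your proposal is correct and follows essentially the same route as the paper's own proof: both verify well-definedness by computing $\rho(z\cdot\gamma)=r(gh)=r(g)$, both note surjectivity of $\rho$ directly, and both prove injectivity by exhibiting the explicit element $\gamma=(n_1,g_1^{-1}g_2,n_2)\in\theta^{\pullback}(\Gc)$ carrying $z_1$ to $z_2$. The only cosmetic difference is that the paper dispatches surjectivity with ``$\rho$ is clearly surjective'' while you spell out the choice of a preimage using surjectivity of~$\theta$.
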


\begin{proof}
If $z=(g,n)$ and $\gamma=(n_2,h,n_1)$ with $(z,\gamma)\in Z\times_N\theta^{\pullback}(\Gc)$, then 
$n_2=n$, $d(g)=r(h)=\theta(n)$, $d(h)=\theta(n_1)$, and $z\cdot\gamma=(gh,n_1)$ 
(see the proof of Lemma~\ref{rightprinc}), 
hence 
$\rho(z\cdot\gamma)=\rho(gh,n_1)=r(g)=\rho(z)$.  
This shows that the map $\tilde\rho$ is well defined. 
Moreover, $\tilde\rho$ is surjective because $\rho$ is clearly surjective. 

To check that $\tilde\rho$ is injective, let $z=(g,n)$ and $z'=(g',n')$ in $Z$ with $\rho(z)=\rho(z')$, 
that is, $r(g)=r(g')$. 
We have to find $\gamma=(n_2,h,n_1)\in \theta^{\pullback}(\Gc)$ with $(z,\gamma)\in Z\times_N\theta^{\pullback}(\Gc)$ 
and $z'=z\cdot \gamma$. 
As above, these two conditions are equivalent to 
$n_2=n$, $d(g)=r(h)=\theta(n)$, $d(h)=\theta(n_1)$, $g'=gh$, and $n'=n_1$, 
hence we obtain the unique solution $\gamma=(n,g^{-1}g',n')\in N\times\Gc\times N$, which is well defined since $r(g)=r(g')$. 
We note that $d(g^{-1}g')=d(g')=\theta(n')$ since $z'\in Z$, and $r(g^{-1}g')=r(g^{-1})=d(g)=n$ since $z\in Z$, 
and this shows that indeed $h\in \theta^{\pullback}(\Gc)$, as we wished for.
\end{proof}

\begin{lemma}\label{leftleaves}
The map 
$\tilde\sigma\colon \Gc\setminus Z\to N$, $\Gc\cdot z\mapsto \sigma(z)$ is well defined and is bijective.   
\end{lemma}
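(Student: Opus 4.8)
The plan is to follow exactly the pattern of the proof of Lemma~\ref{rightleaves}, exploiting the symmetry between the left $\Gc$-action and the right $\theta^{\pullback}(\Gc)$-action on $Z=\Gc\times_{\Gc^{(0)}}N$: the argument is the mirror image of that one, with the roles of $\rho$ and $\sigma$ (and of $\Gc$ and $\theta^{\pullback}(\Gc)$) interchanged.

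First I would verify that $\tilde\sigma$ is well defined. If $z=(g,n)\in Z$ and $h\in\Gc$ with $d(h)=\rho(z)=r(g)$, then $h\cdot z=(hg,n)$, so $\sigma(h\cdot z)=n=\sigma(z)$. Hence $\sigma$ is constant on each $\Gc$-orbit in $Z$ and descends to a map $\tilde\sigma\colon\Gc\setminus Z\to N$.

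Next I would check surjectivity of $\tilde\sigma$: for any $n\in N$, regarding $\theta(n)\in\Gc^{(0)}$ as an identity morphism in $\Gc$ we have $d(\theta(n))=\theta(n)$, so $(\theta(n),n)\in Z$ and $\sigma(\theta(n),n)=n$; thus $\tilde\sigma$ hits every point of $N$. For injectivity, suppose $z=(g,n)$ and $z'=(g',n')$ in $Z$ satisfy $\tilde\sigma(\Gc\cdot z)=\tilde\sigma(\Gc\cdot z')$, that is, $n=n'$. Since $z,z'\in Z$ this forces $d(g)=\theta(n)=\theta(n')=d(g')$, so $h:=g'g^{-1}$ is a well-defined element of $\Gc$ with $d(h)=r(g)=\rho(z)$, and $h\cdot z=(g'g^{-1}g,n)=(g',n')=z'$. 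Hence $\Gc\cdot z=\Gc\cdot z'$, and $\tilde\sigma$ is injective.

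I do not anticipate a genuine obstacle here: the statement is purely algebraic and follows from the composition and inversion rules of the groupoid $\Gc$. The only point needing a moment's care is checking that the composites $hg$ and $g'g^{-1}$ are composable, which is immediate from $d(g)=\theta(n)=d(g')$. Together with Lemma~\ref{rightleaves} this supplies the last ingredient needed to conclude in Theorem~\ref{Morita1} that $Z$ is a $(\Gc,\theta^{\pullback}(\Gc))$-equivalence.
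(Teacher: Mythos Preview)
Your proof is correct and follows essentially the same approach as the paper: both verify well-definedness via $\sigma(h\cdot z)=\sigma(z)$, surjectivity from surjectivity of $\sigma$ (you give the explicit witness $(\theta(n),n)$ while the paper just says $\sigma$ is clearly surjective), and injectivity by constructing $h=g'g^{-1}$ after noting $d(g)=\theta(n)=d(g')$.
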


\begin{proof}
If $h\in\Gc$ and $z=(g,n)\in Z$ with $(h,z)\in \Gc\times_{\Gc^{(0)}}Z$, then 
$d(h)=r(g)$, $d(g)=\theta(n)$, and $h\cdot z=(hg,n)$, 
hence 
$\sigma(h\cdot z)=n=\sigma(z)$.  
This shows that the map $\tilde\sigma$ is well defined. 
Moreover, $\tilde\sigma$ is surjective because $\sigma$ is clearly surjective. 

To check that $\tilde\sigma$ is injective, let $z=(g,n)$ and $z'=(g',n')$ in $Z$ with $\sigma(z)=\sigma(z')$, 
that is, $n=n'$. 
We must find $h\in\Gc$ with $(h,z)\in\Gc\times_{\Gc^{(0)}} Z$ and $h\cdot z=z'$, 
which is equivalent to $d(h)=r(g)$ and $(hg,n)=(g',n')$. 
Using that $z,z'\in Z$, we obtain $d(g)=\theta(n)=\theta(n')=d(g')$, 
hence one has the unique solution $h=g'g^{-1}\in\Gc$, which indeed satisfies $d(h)=r(g)$, and we are done. 
\end{proof}

\begin{proof}[Proof of Theorem~\ref{Morita1}]
To prove that $Z$ is a $(\Gc,\theta^{\pullback}(\Gc))$-equivalence, 
we must check the conditions of \cite[Def. 2.1]{MRW87}, that is: 
\begin{enumerate}[(i)]
\item $Z$ is a left principal $\theta^{\pullback}(\Gc)$-space.  
\item $Z$ is a right principal $\theta^{\pullback}(\Gc)$-space.  
\item The actions of $\Gc$ and $\theta^{\pullback}(\Gc)$ on $Z$ commute.  
\item The map $\rho$ induces a bijection of $Z/\theta^{\pullback}(\Gc)$ onto $\Gc^{(0)}$. 
\item The map $\sigma$ induces a bijection of $\Gc\setminus Z$ onto $(\theta^{\pullback}(\Gc))^{(0)}$. 
\end{enumerate}
The first two of these conditions follow by Lemmas \ref{leftprinc} and \ref{rightprinc}, respectively. 
An inspection of the actions of $\Gc$ and $\theta^{\pullback}(\Gc)$ on $Z$ 
shows that the third condition follows by associativity of the multiplication in the groupoid~$\Gc$. 
Finally, the fourth and fifth of the above conditions follow by Lemmas \ref{leftleaves} and \ref{rightleaves}, 
and this completes the proof. 
\end{proof}

\begin{remark}
\normalfont
In Theorem~\ref{Morita1}, the $(\Gc,\theta^{\pullback}(\Gc))$-equivalence $Z$ is actually 
an essential equivalence in the sense of \cite{Mr96} (see particularly \cite[Cor. II.1.7]{Mr96}). 
This can be seen from the proofs of Lemmas \ref{leftprinc} and \ref{rightprinc}, 
where each of the maps $\Phi$ and $\Psi$ is a homeomorphisms onto its image, 
rather than only a proper map. 
\end{remark}

\begin{corollary}\label{Morita2}
Let $\Gc\tto \Gc^{(0)}$ be a locally compact groupoid having a left Haar system.  
If $N$ is a second countable, locally compact topological space, 
and  
$\theta\colon N\to \Gc^{(0)}$ is a continuous open surjective 
map, 
then $\theta^{\pullback}(\Gc)\tto N$ is a locally compact groupoid with a left Haar system, 
and the $C^*$-algebras $C^*(\Gc)$ and $C^*(\theta^{\pullback}(\Gc))$ are Morita equivalent. 
\end{corollary}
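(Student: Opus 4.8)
The plan is to deduce both assertions from Theorem~\ref{Morita1} combined with the Muhly--Renault--Williams equivalence theorem for groupoid $C^*$-algebras. Before that can be invoked, one has to know that $\theta^{\pullback}(\Gc)\tto N$ fits the standing framework of locally compact groupoids and carries a left Haar system, so the proof splits into three parts: (a) topological preliminaries on $\theta^{\pullback}(\Gc)$; (b) construction of a Haar system; (c) the appeal to Theorem~\ref{Morita1} and \cite{MRW87}.

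First I would verify part (a). Since $\theta$, $d$, $r$ are continuous and $\Gc^{(0)}$ is Hausdorff, the subset
$$\theta^{\pullback}(\Gc)=\{(n_2,g,n_1)\in N\times\Gc\times N\mid r(g)=\theta(n_2),\ d(g)=\theta(n_1)\}$$
is closed in $N\times\Gc\times N$, hence second countable, locally compact and Hausdorff in its relative topology; inversion $(n_2,g,n_1)\mapsto(n_1,g^{-1},n_2)$, composition, and the inclusion of $N$ as the unit space are continuous, and $N\hookrightarrow\theta^{\pullback}(\Gc)$ is a homeomorphism onto its image, all inherited from the ambient space. To see that the domain map $d(n_2,g,n_1)=n_1$ is open, I would compute, for basic open sets $V,V'\subseteq N$ and $W\subseteq\Gc$,
$$d\bigl((V\times W\times V')\cap\theta^{\pullback}(\Gc)\bigr)=V'\cap\theta^{-1}\bigl(d(W\cap r^{-1}(\theta(V)))\bigr),$$
which is open because $\theta$ and $d$ are open and $\theta$ and $r$ are continuous; since the domain map carries unions to unions and every open subset of $\theta^{\pullback}(\Gc)$ is a union of such basic sets, it is open. (Alternatively one could simply cite \cite[\S~3.3]{Wi15}.)

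Next, for part (b), the crucial input is that every continuous open surjection $\theta\colon N\to\Gc^{(0)}$ between second countable locally compact Hausdorff spaces admits a full \emph{$\theta$-system}: a family $\alpha=\{\alpha^x\}_{x\in\Gc^{(0)}}$ of Radon measures with $\supp\alpha^x=\theta^{-1}(x)$ such that $x\mapsto\int_N f\,\de\alpha^x$ is continuous for each $f\in\Cc_c(N)$ (see \cite[\S~3.3]{Wi15}). Using such an $\alpha$ together with the given Haar system $\lambda=\{\lambda^x\}$ of $\Gc$, one sets, for $F\in\Cc_c(\theta^{\pullback}(\Gc))$ and $n\in N$,
$$\tilde\lambda^n(F):=\int_{\Gc^{\theta(n)}}\Bigl(\int_{\theta^{-1}(d(g))}F(n,g,n_1)\,\de\alpha^{d(g)}(n_1)\Bigr)\de\lambda^{\theta(n)}(g).$$
Then $\tilde\lambda^n$ is a Radon measure supported on the range fiber $r^{-1}(n)\subseteq\theta^{\pullback}(\Gc)$; continuity of $n\mapsto\tilde\lambda^n(F)$ follows from the continuity built into $\lambda$ and into $\alpha$, and left invariance reduces to that of $\lambda$: writing the composition as $(n_2,h,n_1)\cdot(n_1,g,n_0)=(n_2,hg,n_0)$ and noting $d(hg)=d(g)$, the inner $\de\alpha$-integral is unchanged by the substitution $g\mapsto hg$, so the required invariance is precisely the invariance of $\lambda^{d(h)}=\lambda^{\theta(n_1)}$ under left translation by $h$. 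These verifications are routine.

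Finally, for part (c): with $\Gc$ and $\theta^{\pullback}(\Gc)$ now both second countable locally compact Hausdorff groupoids carrying left Haar systems, Theorem~\ref{Morita1} presents $Z=\Gc\times_{\Gc^{(0)}}N$ as a $(\Gc,\theta^{\pullback}(\Gc))$-equivalence, and the Muhly--Renault--Williams equivalence theorem \cite[Th.~2.8]{MRW87} yields that $C^*(\Gc)$ and $C^*(\theta^{\pullback}(\Gc))$ are Morita equivalent, which finishes the proof. The step I expect to require the most care is the construction of the Haar system, and there the substantive point is the existence of a full $\theta$-system for an arbitrary continuous open surjection; everything else is either inherited topology on the subspace $\theta^{\pullback}(\Gc)\subseteq N\times\Gc\times N$ or a direct application of Theorem~\ref{Morita1} and \cite{MRW87}.
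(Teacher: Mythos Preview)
Your argument is correct, and parts~(a) and~(c) essentially coincide with what the paper does (indeed, your part~(a) is more careful than the paper, which tacitly treats the local compactness and openness of $d$ for $\theta^{\pullback}(\Gc)$ as known). The genuine difference is in part~(b): you construct the Haar system on $\theta^{\pullback}(\Gc)$ explicitly, by choosing a full $\theta$-system $\{\alpha^x\}$ and integrating against it and against $\lambda$, whereas the paper bypasses any explicit construction. The paper first invokes Theorem~\ref{Morita1} to obtain the $(\Gc,\theta^{\pullback}(\Gc))$-equivalence~$Z$, and then applies \cite[Th.~2.1]{Wi15}, which says that if two locally compact groupoids are equivalent and one of them carries a Haar system then so does the other; only after that does it appeal to \cite[Th.~2.8]{MRW87}. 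Your route has the advantage of producing a concrete Haar system (useful if one later wants to compute with it) and of making transparent exactly which ingredient---the existence of a $\theta$-system for a continuous open surjection---carries the weight. The paper's route is shorter and conceptually cleaner: once the equivalence $Z$ is in hand, both the Haar system on the pullback and the Morita equivalence of the $C^*$-algebras flow from it via black-box citations, so the equivalence is the single organizing idea.
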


\begin{proof}
It follows by Theorem~\ref{Morita1} that there exists a $(\Gc,\theta^{\pullback}(\Gc))$-equivalence. 
Now, since $\Gc$ has a Haar system, it follows 
by \cite[Th. 2.1]{Wi15} that also $\theta^{\pullback}(\Gc)$ has a Haar system, 
and then \cite[Th. 2.8]{MRW87} implies that the $C^*$-algebras $C^*(\Gc)$ and $C^*(\theta^{\pullback}(\Gc))$ are Morita equivalent. 
\end{proof}

One of the conditions imposed in Theorem~\ref{Morita1} is that certain maps be open and surjective. 
A convenient tool for checking these conditions 
is the notion of local cross-section:   
A map between topological spaces $\theta\colon X\to Y$ has \emph{local cross-sections} 
if for every $y\in Y$ and every $x\in\theta^{-1}(y)$ there exist an open neighborhood $V$ of~$y$ 
and a continuous map $\tau\colon V\to X$ with $\tau(y)=x$ and $\theta\circ\tau=\id_V$. 
The next lemma shows that these maps are in particular open and surjective, and  records some other basic properties, for later use.

\begin{lemma}\label{local}
	If $\theta\colon X\to Y$ is a map between topological spaces, then one has: 
	\begin{enumerate}[(i)]
		\item\label{local_item1} 
		If $X$ and $Y$ are smooth manifolds and $\theta$ is a surjective submersion, then $\theta$ has local cross sections. 
		\item\label{local_item2} 
		If $\theta$ has local cross-sections, then it is surjective and is an open map. 
		\item\label{local_item3} 
		If $\theta$ has local cross-sections and $Y_0\subseteq Y$, 
		then the map $\theta\vert_{\theta^{-1}(Y_0)}\colon \theta^{-1}(Y_0)\to Y_0$ has local cross-sections. 
		\item\label{local_item4} 
		If $\theta$ has local cross-sections, $\eta\colon Z\to Y$ is any continuous map, and we define 
		the topological space $\eta^*(X):=\bigcup\limits_{z\in Z}\{z\}\times\theta^{-1}(\eta(z))\subseteq Z\times X$, 
		then the map $\pi\colon \eta^*(X)\to Z$, $(z,x)\mapsto z$, has local cross-sections. 
	\end{enumerate}
\end{lemma}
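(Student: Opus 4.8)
The plan is to prove the four assertions of Lemma~\ref{local} in order, each being essentially a direct unwinding of the definition of local cross-sections.

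\textbf{Assertion \eqref{local_item1}.} If $\theta\colon X\to Y$ is a surjective submersion of smooth manifolds, then surjectivity is given, and for each $x\in X$ the constant-rank/submersion theorem provides local coordinates in which $\theta$ looks like a linear projection $\RR^m\times\RR^k\to\RR^m$ near $x$; a local section is then obtained by composing the obvious linear section of the projection with the coordinate charts. I would simply cite this as the local normal form of a submersion.

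\textbf{Assertion \eqref{local_item2}.} Suppose $\theta$ has local cross-sections. Surjectivity is immediate: given $y\in Y$, the set $\theta^{-1}(y)$ must be nonempty because the definition already presupposes an $x\in\theta^{-1}(y)$ — more carefully, one notes that if $\theta^{-1}(y)=\emptyset$ the statement is vacuously not applicable, so one should phrase the hypothesis as: for every $y$ there is an $x$ with the stated property, which forces $\theta^{-1}(y)\neq\emptyset$. For openness, let $W\subseteq X$ be open and pick $y\in\theta(W)$, say $y=\theta(x)$ with $x\in W$. Choose a local section $\tau\colon V\to X$ with $\tau(y)=x$ and $\theta\circ\tau=\id_V$. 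Then $\tau^{-1}(W)$ is an open neighbourhood of $y$ in $V$ (hence in $Y$), and $\tau^{-1}(W)\subseteq\theta(W)$ because for $y'\in\tau^{-1}(W)$ one has $y'=\theta(\tau(y'))\in\theta(W)$. Thus $\theta(W)$ is a neighbourhood of each of its points, so it is open.

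\textbf{Assertion \eqref{local_item3}.} Let $Y_0\subseteq Y$ and write $\theta_0:=\theta\vert_{\theta^{-1}(Y_0)}\colon\theta^{-1}(Y_0)\to Y_0$. Given $y\in Y_0$ and $x\in\theta_0^{-1}(y)=\theta^{-1}(y)$, take a local section $\tau\colon V\to X$ of $\theta$ with $\tau(y)=x$. Then $V\cap Y_0$ is an open neighbourhood of $y$ in $Y_0$, and $\tau$ maps $V\cap Y_0$ into $\theta^{-1}(Y_0)$ since $\theta(\tau(y'))=y'\in Y_0$; so $\tau\vert_{V\cap Y_0}$ is the required local section of $\theta_0$.

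\textbf{Assertion \eqref{local_item4}.} Let $\eta\colon Z\to Y$ be continuous and $\eta^*(X)=\{(z,x)\in Z\times X\mid\theta(x)=\eta(z)\}$ with the subspace topology, $\pi(z,x)=z$. Given $(z_0,x_0)\in\eta^*(X)$ set $y_0:=\eta(z_0)=\theta(x_0)$ and choose a local section $\tau\colon V\to X$ of $\theta$ with $\tau(y_0)=x_0$. Then $W:=\eta^{-1}(V)$ is an open neighbourhood of $z_0$ in $Z$, and I claim the map $\widetilde\tau\colon W\to\eta^*(X)$, $\widetilde\tau(z):=(z,\tau(\eta(z)))$, is a well-defined continuous local section: it is continuous as a composition of continuous maps landing in $Z\times X$, its image lies in $\eta^*(X)$ because $\theta(\tau(\eta(z)))=\eta(z)$ for $z\in W$, it satisfies $\pi\circ\widetilde\tau=\id_W$, and $\widetilde\tau(z_0)=(z_0,\tau(y_0))=(z_0,x_0)$.

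There is no real obstacle here; the only point requiring a little care is the precise reading of the hypothesis ``$\theta$ has local cross-sections'' so that it genuinely entails surjectivity in \eqref{local_item2} (equivalently, one interprets it as: for every $y\in Y$ there exists $x\in\theta^{-1}(y)$ together with a local section through $x$). Everything else is a routine diagram chase with continuous maps and subspace topologies, and the compatibility checks $\theta\circ\tau=\id$, $\widetilde\tau(z_0)=(z_0,x_0)$, etc., are immediate.
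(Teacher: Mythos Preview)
Your proof is correct and follows the same approach as the paper. The paper simply declares assertions \eqref{local_item1}--\eqref{local_item3} to be ``clear'' and for \eqref{local_item4} gives exactly your construction $\sigma(z)=(z,\tau(\eta(z)))$ on $U=\eta^{-1}(V)$; your write-up is thus a fleshed-out version of the paper's argument, and your caveat about reading the definition so that surjectivity is genuinely entailed in \eqref{local_item2} is well taken.
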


\begin{proof}
	Assertions \eqref{local_item1}--\eqref{local_item3} are clear. 
	For Assertion~\eqref{local_item4} let $z_0\in Z$ and $(z_0,x_0)\in\pi^{-1}(z_0)=\{z_0\}\times\theta^{-1}(\eta(z_0))$ arbitrary. 
	Then $\theta(x_0)=\eta(z_0)=:y_0\in Y$, hence by the hypothesis on $\theta$, 
	there exist an open neighborhood $V$ of~$y_0$ 
	and a continuous map $\tau\colon V\to X$ with $\tau(y_0)=x_0$ and $\theta\circ\tau=\id_V$. 
	The set $U:=\eta^{-1}(V)$ is an open neighborhood of $z_0$.  
	Moreover, as $\eta(U)\subseteq V$, 
	the map $\sigma\colon U\to \eta^*(X)$, $\sigma(z):=(z,\tau(\eta(z)))$ is well defined, continuous, 
	and satisfies $\pi\circ\sigma=\id_U$ and $\sigma(z_0)=(z_0,\tau(\eta(z_0)))=(z_0,x_0)$. 
	This completes the proof. 
\end{proof}

Results as Corollary~\ref{Morita3} below were already indicated in \cite[Ex.~2.7]{MRW87} and \cite[Th.~2.5]{MW95}.

\begin{corollary}\label{Morita3}
Let $\Gc\tto \Gc^{(0)}$ be a locally compact groupoid 
with its 
quotient map $q\colon\Gc^{(0)}\to\Gc\setminus\Gc^{(0)}$ 
onto its set of orbits.  
Assume that one has a locally closed subset $\Xi\subseteq \Gc^{(0)}$
for which the map $q\vert_\Xi\colon\Xi\to \Gc\setminus\Gc^{(0)}$ is a homeomorphism, and denote $\theta= (q\vert_\Xi)^{-1}\circ q\colon \Gc^{(0)} \to 
\Xi$. 
Also assume
that one has a continuous map 
$\sigma\colon \Gc^{(0)}\to \Gc$ 
with $d\circ\sigma=\id$ and $r\circ \sigma =\theta$. 
Then define the bundle of isotropy groups $\Gamma:=\bigsqcup\limits_{x\in\Xi}\Gc(x)$, 
and let $\Pi\colon \Gamma\to\Xi$ be the canonical projection with $\Pi^{-1}(x)=\Gc(x)$ for all $x\in\Xi$. 
If $\Pi$ has local cross-sections, 
then one has: 
\begin{enumerate}[(i)]
\item $\Pi\colon\Gamma\to\Xi$ is a group bundle and the groupoid $\Gc\tto\Gc^{(0)}$ has a left Haar system.  
\item The set $\Xi$ is closed and the  map $\theta:=(q\vert_\Xi)^{-1}\circ q\colon\Gc^{(0)}\to\Xi$ is continuous, open, and surjective. 
\item The locally compact groupoids $\Gc\tto\Gc^{(0)}$ and $\theta^{\pullback}(\Gamma)\tto\Gc^{(0)}$ are isomorphic. 
\item The $C^*$-algebras $C^*(\Gc)$ and $C^*(\Gamma)$ are Morita equivalent. 
\end{enumerate}
\end{corollary}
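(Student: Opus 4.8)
The plan is to verify the four assertions of Corollary~\ref{Morita3} by assembling pieces already established in the excerpt, with the main work being to upgrade the \emph{algebraic} isomorphism $\Phi\colon\Gc\to\theta^{\pullback}(\Pi)$ from \eqref{introd_eq1}--\eqref{introd_eq2} of the introduction into a topological one and to supply the open/surjective hypotheses needed by Corollary~\ref{Morita2}.

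First I would prove (i). Since $\Pi$ has local cross-sections, Lemma~\ref{local}\eqref{local_item2} shows $\Pi$ is open and surjective, so the natural projection $\Gamma=\bigsqcup_{x\in\Xi}\Gc(x)\to\Xi$ is open; as its fibers are locally compact groups (closed subgroups of the isotropy groups of~$\Gc$, with relative topology), $\Pi\colon\Gamma\to\Xi$ is a group bundle, and hence by Remark~\ref{Re91_Lemma1.3} it carries a left Haar system. To get a Haar system on $\Gc$ itself, I would use the isomorphism $\Gc\cong\theta^{\pullback}(\Gamma)$ from (iii) together with \cite[Th.~2.1]{Wi15}: the pullback of a group bundle with a Haar system by the continuous open surjection $\theta$ (established in (ii)) again has a Haar system, hence so does the isomorphic groupoid~$\Gc$. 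Logically this means (ii) and (iii) should be proved before finishing (i), so in the write-up I would reorder the argument, treating (ii) and (iii) first and then returning to close (i).

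For (ii), the locally closed set $\Xi$ maps homeomorphically onto the orbit space under $q|_\Xi$, so $\theta=(q|_\Xi)^{-1}\circ q$ is well defined; it is continuous because $q$ is continuous (the quotient map of a groupoid whose range map is open is open, hence continuous as a quotient map) and $(q|_\Xi)^{-1}$ is a homeomorphism. The crucial point that $\Xi$ is \emph{closed}: since $\theta\colon\Gc^{(0)}\to\Xi$ is a continuous retraction onto $\Xi$ (indeed $\theta|_\Xi=\id_\Xi$), and $\Gc^{(0)}$ is Hausdorff, $\Xi=\{x\mid\theta(x)=x\}$ is the equalizer of two continuous maps into a Hausdorff space, hence closed. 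For openness and surjectivity of $\theta$: surjectivity is immediate since $\theta|_\Xi=\id$; openness follows because $q$ is open (as just noted) and $(q|_\Xi)^{-1}$ is a homeomorphism, so $\theta$ is a composition of open maps. For (iii), I would check that the map $\Phi$ of \eqref{introd_eq1}, $\Phi(g)=(r(g),\sigma(r(g))g\sigma(d(g))^{-1},d(g))$, is a homeomorphism onto $\theta^{\pullback}(\Gamma)$: it is algebraically a groupoid isomorphism by the computation already recorded in the introduction, it is continuous because $\sigma$, $d$, $r$, multiplication and inversion in~$\Gc$ are continuous (and one checks $\sigma(r(g))g\sigma(d(g))^{-1}$ lands in the isotropy group $\Gc(\theta(d(g)))$, using $r\circ\sigma=\theta$ and $\theta$ being constant on orbits), and its inverse $\Phi^{-1}(x,h,y)=\sigma(y)^{-1}h\sigma(x)$ is likewise continuous in $(x,h,y)$; both are homeomorphisms onto their images by the symmetric formulas, so $\Gc\cong\theta^{\pullback}(\Gamma)$ as topological groupoids.

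Finally (iv) is then a direct application: by (i) the group bundle $\Pi\colon\Gamma\to\Xi$ has a Haar system, by (ii) $\theta\colon\Gc^{(0)}\to\Xi$ is a continuous open surjection with $\Gc^{(0)}$ second countable and locally compact, so Corollary~\ref{Morita2} (applied with the groupoid $\Gamma\tto\Xi$ in place of $\Gc\tto\Gc^{(0)}$) gives that $\theta^{\pullback}(\Gamma)$ has a Haar system and $C^*(\Gamma)$ and $C^*(\theta^{\pullback}(\Gamma))$ are Morita equivalent; composing with the isomorphism of (iii) yields Morita equivalence of $C^*(\Gc)$ and $C^*(\Gamma)$. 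I expect the main obstacle to be the topological bookkeeping in (iii): one must be careful that the three maps $\theta,\Pi,\theta$ appearing in the definition of $\theta^{\pullback}(\Pi)$ genuinely agree on the image of $\Phi$, i.e.\ that $\sigma(r(g))g\sigma(d(g))^{-1}$ really is an element of the fiber $\Pi^{-1}(\theta(d(g)))=\Gc(\theta(d(g)))$ — this uses both $r\circ\sigma=\theta$ and the fact that $\theta$ identifies the two endpoints $\theta(r(g))=\theta(d(g))$ since $r(g)$ and $d(g)$ lie on the same $\Gc$-orbit — and that the relative topology on $\theta^{\pullback}(\Gamma)\subseteq\Gc^{(0)}\times\Gamma\times\Gc^{(0)}$ matches the groupoid topology transported from~$\Gc$ via~$\Phi$; everything else is a routine assembly of the cited results.
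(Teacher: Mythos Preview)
Your proposal is correct and follows essentially the same route as the paper's proof: both use Lemma~\ref{local}\eqref{local_item2} to get that $\Pi$ is open (hence a group bundle with a Haar system via Remark~\ref{Re91_Lemma1.3}), verify that $\theta$ is continuous, open and surjective, check that the explicit formulas \eqref{introd_eq1}--\eqref{introd_eq2} give a topological groupoid isomorphism $\Gc\cong\theta^{\pullback}(\Gamma)$, and then invoke Corollary~\ref{Morita2} to obtain both the Haar system on $\Gc$ and the Morita equivalence. Your equalizer argument for the closedness of $\Xi$ (as the fixed-point set of the continuous retraction $\theta$ in the Hausdorff space $\Gc^{(0)}$) is a clean variant of the paper's argument, which instead observes that $\Gc\setminus\Gc^{(0)}$ is Hausdorff and cites \cite[p.~113]{MW95}.
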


\begin{proof}
The set $\Xi$ is locally compact with its induced topology and intersects every $\Gc$-orbit at exactly one point. Since $q\vert_\Xi$ is a homeomorphism, it follows that $\Gc\setminus\Gc^{(0)}$ is Hausdorff and then it is straightforward to check that $\Xi$ is closed in $\Gc^{(0)}$, as noted for instance in \cite[p.~113]{MW95}. 

The pullback of $\Gamma$ by $\theta$
$$\theta^{\pullback}(\Gamma)=\{(x,h,y)\in \Gc^{(0)}\times \Gamma\times \Gc^{(0)}\mid \theta(x)=\Pi(h)=\theta(y)\}.$$
It is straightforward to check that 
the map 
\begin{equation*}
\Phi\colon\Gc\to \theta^{\pullback}(\Pi),\quad 
\Phi(g):=(r(g),\sigma(r(g))g\sigma(d(g))^{-1},d(g))
\end{equation*}
is an isomorphism of topological groupoids, with its inverse  
\begin{equation*}
\Phi^{-1}\colon \theta^{\pullback}(\Pi)\to \Gc,\quad 
\Phi^{-1}(x,h,y)=\sigma(y)^{-1}h\sigma(x)
\end{equation*}
On the other hand, let us note that both maps $\Pi$ and $\theta$ are continuous, open, and surjective: 
These properties of $\Pi$ follow by Lemma~\ref{local}\eqref{local_item2}, while the same assertion for $\theta$ is a consequence of the well-known fact that $q$ is open and surjective. 
(Recall that for any open set $U\subseteq\Gc^{(0)}$ one has $q^{-1}(q(U))=r(d^{-1}(U))$, which is an open set in $\Gc^{(0)}$, 
since $r$ is an open map and $d$ is continuous. 
Then use the definition of the quotient topology on $\Gc\setminus\Gc^{(0)}$.) 

Then, using Corollary~\ref{Morita2} for the groupoid $\Gc=\Gamma\tto\Xi$ that has a Haar system 
by Remark~\ref{Re91_Lemma1.3}, 
we can see that the groupoid $\theta^{\pullback}(\Gamma)\tto \Gc^{(0)}$ in turn has a Haar system 
and its $C^*$-algebra is Morita equivalent to $C^*(\Gamma)$. 
Therefore, as we have already seen that $\Gc$ is isomorphic to $\theta^{\pullback}(\Gamma)$, 
it follows that $\Gc$ has a Haar system 
and its $C^*$-algebra is Morita equivalent to $C^*(\Gamma)$, which concludes the proof. 
\end{proof}

\begin{remark}
\normalfont
The method of proof of Corollary~\ref{Morita3} goes back to \cite{MRW87} and \cite{Bu03} in the special case of transitive groupoids. 
\end{remark}

For the following corollary we recall that for any locally compact group $G$ we denoted by 
$\Kc(G)$ the compact space of all closed subgoups of $G$ with its Fell topology. (see Section~\ref{Sect3}). 
For results of this type on noncommutative $C^*$-dynamical systems we refer to 
\cite[Prop.~8.7]{Wi07}.

\begin{corollary}\label{Morita4}
Let $\alpha\colon G\times X\to X$, $(g,x)\mapsto g\cdot x$, 
be a continuous action of a locally compact group on a locally compact space, 
quotient map $q\colon X\to G \setminus X$ 
onto its set of orbits. 
We assume the following: 
\begin{itemize}
\item One has a locally closed subset $\Xi\subseteq X$ 
for which the map $q\vert_\Xi\colon\Xi\to G\setminus X$ is a homeomorphism. 
\item The map $\Xi\to \Kc(G)$, $x\mapsto G(x):=\{g\in G\mid g\cdot x=x\}$, is continuous. 
\item One has a continuous map $\sigma_0\colon X\to G$ with $\sigma_0(x)\cdot x=\theta(x)$ for all $x\in X$, where $\theta:=(q\vert_\Xi)^{-1}\circ q\colon X\to\Xi$.
\end{itemize}
Let $\Gamma:=\bigcup\limits_{x\in\Xi}\{x\}\times G(x)$, 
and $\Pi\colon \Gamma\to\Xi$ with $\Pi(\{x\}\times G(x))=\{x\}$ for all $x\in\Xi$. 

Then $\Pi\colon\Gamma\to\Xi$ is a group bundle. 
If moreover  $\Pi$ has local cross sections, 
then one has: 
\begin{enumerate}[(i)]
\item\label{Morita4_item1} 
The set $\Xi$ is closed, and the maps $\theta$ and $q$ are continuous and have local cross-sections. 
\item\label{Morita4_item2} 
The locally compact groupoids $G\times X\tto X$ and $\theta^{\pullback}(\Gamma)\tto X$ are isomorphic. 
\item\label{Morita4_item3} 
The $C^*$-algebras $G\ltimes\Cc_0(X)$ and $C^*(\Gamma)$ are Morita equivalent.  
\item\label{Morita4_item4} If the locally compact group $G$ is amenable, then $C^*(\Gamma)$ is a $\Cc_0(\Xi)$-algebra 
that is $\Cc_0(\Xi)$-linearly $*$-isomorphic to 
the $C^*$-algebra of global cross sections of a continuous $C^*$-bundle whose fibers are $\{C^*(G(x))\}_{x\in\Xi}$. 
\end{enumerate}
\end{corollary}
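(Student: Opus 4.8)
The plan is to reduce the whole statement to Corollary~\ref{Morita3} applied to the transformation groupoid $\Gc:=G\times X$, regarded as a locally compact groupoid $\Gc\tto X$ with domain/range maps $d(g,x)=x$ and $r(g,x)=g\cdot x$ and equipped with the standard left Haar system built from a left Haar measure on~$G$. Its isotropy group at $x\in X$ is $\Gc(x)=\{(g,x)\mid g\in G(x)\}$, so the coordinate swap $(g,x)\mapsto(x,g)$ identifies $\bigsqcup_{x\in\Xi}\Gc(x)$ with~$\Gamma$ and the isotropy-bundle projection with $\Pi$. Before using any local cross-section hypothesis I would prove that $\Pi\colon\Gamma\to\Xi$ is a group bundle: writing $\iota\colon\Xi\to\Kc(G)$, $\iota(x):=G(x)$ (continuous by the second bulleted hypothesis), the map $(x,g)\mapsto(x,(G(x),g))$ is a homeomorphism of $\Gamma$ onto the pullback $\iota^*(\Tc(G))=\{(x,t)\in\Xi\times\Tc(G)\mid\iota(x)=p(t)\}$ of the group bundle $p\colon\Tc(G)\to\Kc(G)$ from Lemma~\ref{bund1}, and it intertwines $\Pi$ with the first-coordinate projection; by Lemma~\ref{bund2} the latter projection is a group bundle, hence so is~$\Pi$.

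Now assume that $\Pi$ has local cross-sections. Then $\sigma\colon X\to\Gc$, $\sigma(x):=(\sigma_0(x),x)$, is continuous with $d\circ\sigma=\id_X$ and $r\circ\sigma=\theta$ (the latter because $\sigma_0(x)\cdot x=\theta(x)$), so all hypotheses of Corollary~\ref{Morita3} are satisfied for $\Gc\tto X$, the locally closed set $\Xi$, and this~$\sigma$. Corollary~\ref{Morita3} then gives at once: $\Gc$ has a left Haar system, $\Xi$ is closed, $\theta$ is continuous, open and surjective, the groupoids $\Gc\tto X$ and $\theta^{\pullback}(\Gamma)\tto X$ are isomorphic — which is Assertion~\eqref{Morita4_item2} — and $C^*(\Gc)$ is Morita equivalent to $C^*(\Gamma)$. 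Combining the last point with the standard identification $C^*(\Gc)=C^*(G\times X)\simeq G\ltimes\Cc_0(X)$ (see \cite[Ch.~8]{Wi07}) yields Assertion~\eqref{Morita4_item3}.

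To finish Assertion~\eqref{Morita4_item1} it remains to see that $\theta$ and $q$ have local cross-sections (closedness of $\Xi$ being already covered). Given $y\in\Xi$ and $x\in\theta^{-1}(y)=G\cdot y$, the map $\tau\colon\Xi\to X$, $\tau(v):=\sigma_0(x)^{-1}\cdot v$, is continuous, satisfies $\tau(y)=\sigma_0(x)^{-1}\cdot y=x$ (since $\sigma_0(x)\cdot x=\theta(x)=y$), and $\theta\circ\tau=\id_\Xi$ (since $\sigma_0(x)^{-1}\cdot v$ lies in the $G$-orbit of $v$ and $v\in\Xi$); thus $\theta$ has global, in particular local, cross-sections, and then so does $q=(q\vert_\Xi)\circ\theta$ because $q\vert_\Xi$ is a homeomorphism. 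For Assertion~\eqref{Morita4_item4} I would bring in amenability of~$G$: by Proposition~\ref{strong} the bundle $p\colon\Tc(G)\to\Kc(G)$ is a strong group bundle, so Lemma~\ref{bund2bis}\eqref{bund2bis_item2} applied to the continuous map $\iota$ shows that $\iota^*(\Tc(G))\to\Xi$ — that is, $\Pi\colon\Gamma\to\Xi$ under the identification above — is a strong group bundle; by the definition of a strong group bundle together with Lemma~\ref{1Cstar_isotrop}\eqref{1Cstar_isotrop_item1}, $C^*(\Gamma)$ is therefore $\Cc_0(\Xi)$-linearly $*$-isomorphic to the $C^*$-algebra of global cross sections of a continuous $C^*$-bundle over~$\Xi$ whose fiber over $x$ is $C^*(\Pi^{-1}(x))=C^*(G(x))$.

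The work is mostly bookkeeping rather than a real obstacle. One must keep straight the a priori three topologies on $\Gamma$ — the subspace topology from $X\times G$, the pullback topology inside $\iota^*(\Tc(G))$, and the isotropy-subgroupoid topology used in Corollary~\ref{Morita3} — verify that the standing second countability and local compactness assumptions pass to $G\times X$, and make sure the Haar systems are matched so that all the groupoid $C^*$-algebras in sight literally coincide. Once the identification $\Gamma\cong\iota^*(\Tc(G))$ is in place, the only structural input beyond Corollary~\ref{Morita3} is the observation that the isotropy-group bundle of a transformation group is a pullback of Fell's universal group bundle $\Tc(G)$, which is exactly what makes ``group bundle'' automatic in general and ``strong group bundle'' automatic when $G$ is amenable.
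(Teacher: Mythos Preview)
Your proof is correct and follows essentially the same route as the paper: identify $\Gamma$ with the pullback $\iota^*(\Tc(G))$ of Fell's universal group bundle via Lemmas~\ref{bund1}--\ref{bund2}, set $\sigma(x)=(\sigma_0(x),x)$, and invoke Corollary~\ref{Morita3} for the transformation groupoid $G\times X\tto X$ to obtain \eqref{Morita4_item1}--\eqref{Morita4_item3}, then use Proposition~\ref{strong} and Lemma~\ref{bund2bis}\eqref{bund2bis_item2} for \eqref{Morita4_item4}. The only cosmetic difference is that you build the local cross-section of $\theta$ (and hence of $q$) by translating with the specific element $\sigma_0(x)^{-1}$, whereas the paper picks an arbitrary $g$ moving the cross-section point to $x$; these are the same argument.
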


\begin{proof}
We check that the conditions of Corollary~\ref{Morita3} are satisfied by the groupoid 
$\Ac:=G\times X\tto X=:\Ac^{(0)}$, with $d(g,x)=x$ and $r(g,x)=g\cdot x$, 
defined by the action of $G$ on~$X$. 
Any Haar measure on the locally compact group $G$ defines a left Haar system on $\Ac$, 
and it is well known that $C^*(\Ac)$ is $*$-isomorphic to the crossed product $G\ltimes\Cc_0(X)$. 

The map $\sigma\colon X\to G\times X$, $\sigma(x):=(\sigma_0(x),x)$, is continuous, and satisfies $d\circ\sigma=\id$ and  $r\circ\sigma=\theta$.

To see that $q$ has local cross-sections, define $\gamma:=(q\vert_\Xi)^{-1}\colon G\setminus X\to\Xi$, 
and select $x_0\in X$ arbitrary. 
Then $\xi_0:=\gamma(G\cdot x_0)\in X$ satisfies $G\cdot x_0\cap\Xi=\{\xi_0\}$, 
hence one can find $g\in G$ with $g\cdot \xi_0=x_0$. 
Defining $\gamma_g\colon G\setminus X\to X$, $\gamma_g(G\cdot x):=g.\gamma(G\cdot x)$, 
one obtains a continuous map satisfying $q\circ\gamma_g=\id$ and
 $\gamma_g(G\cdot x_0)=g\cdot \xi_0=x_0$. 

Now define $\theta_0\colon\Xi\to\Sc$, $\theta_0(\xi):=G(\xi)$. 
This map is continuous by hypothesis and one has $\Gamma=\theta_0^*(\Tc)$, 
where $p\colon \Tc\to\Sc$ is the group bundle from Lemma~\ref{bund1}. 
Then $\Pi\colon\Gamma\to\Xi$ is a group bundle by Lemma~\ref{bund2}, 
and this completes the proof of Assertions \eqref{Morita4_item1}--\eqref{Morita4_item3}. 

For Assertion~\eqref{Morita4_item4} we still need to check that if the group $G$ is amenable, 
then $\Pi\colon\Gamma\to\Xi$ is a strong group bundle. 
To this end we recall from Proposition~\ref{strong} that $p\colon\Tc(G)\to\Kc(G)$ is a strong group bundle, 
hence $q\colon\eta^*(\Tc(G))\to\Xi$ is in turn a strong group bundle by Lemma~\ref{bund2bis}\eqref{bund2bis_item2}, 
where $\eta\colon\Xi\to \Kc(G)$, $\eta(x):=G(x)$. 
Here one has 
$\eta^*(\Tc(G))=\{(x,(K,g))\in\Xi\times\Tc(G)\mid \theta(x)=K\}$ 
hence there is the topological isomorphism of group bundles over~$\Xi$, 
$$\eta^*(\Tc(G))\to\Gamma,\quad (x,(K,g))\mapsto (x,g)$$
with its inverse $\Gamma\to\eta^*(\Tc(G))$, $(x,g)\mapsto(x,(\theta(x),g))$. 
Therefore $\Pi\colon\Gamma\to\Xi$ is a strong group bundle, 
and we are done. 
\end{proof}

We point out here that  in the proof of Corollary~\ref{th_exp} we show that  
the hypotheses of the above Corollary~\ref{Morita4} are satisfied for sufficiently small pieces of the coadjoint dynamical system of any exponential solvable Lie group.

\begin{proposition}\label{29Nov2015}
Let $\Tc\to S$ be any group bundle having local cross-sections 
and $\theta\colon N\to S$ be any continuous surjective %open 
map with local cross-sections. 
Then $\theta^{\pullback}(\Tc)\tto N$ is a regular groupoid. 
\end{proposition}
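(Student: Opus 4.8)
The plan is to verify directly the two numbered conditions of Definition~\ref{regular} for the groupoid $\theta^{\pullback}(\Tc)\tto N$, after first recording that it belongs to the class of locally compact groupoids with a left Haar system (which is needed for the notion of \emph{regular} to even apply). The latter is essentially free: since $\theta$ has local cross-sections it is continuous, open, and surjective by Lemma~\ref{local}\eqref{local_item2}, and since $\Tc\to S$ is a group bundle it carries a left Haar system by Remark~\ref{Re91_Lemma1.3}; hence Corollary~\ref{Morita2} (applied with $\Gc=\Tc$, the space $N$ being as usual second countable, locally compact and Hausdorff) shows that $\theta^{\pullback}(\Tc)\tto N$ is a locally compact groupoid with a left Haar system.

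For the first condition of Definition~\ref{regular}, I would appeal to formula~\eqref{pb_eq2}: the $\theta^{\pullback}(\Tc)$-orbit of a point $n_0\in N$ equals $\theta^{-1}\bigl(\Tc\cdot\theta(n_0)\bigr)$. Because the range and domain maps of the group bundle $\Tc$ coincide, every $\Tc$-orbit is a singleton, so $\Tc\cdot\theta(n_0)=\{\theta(n_0)\}$, which is closed in the Hausdorff space $S$; continuity of $\theta$ then makes $\theta^{-1}(\{\theta(n_0)\})$ closed in $N$, in particular locally closed. (This is exactly what the sentence following~\eqref{pb_eq2} asserts.)

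For the second condition, I would identify the isotropy subgroupoid with a group bundle of the type produced by Lemma~\ref{bund2}. Since $\Tc$ is a group bundle one has $\Tc_{\theta(n)}^{\theta(n)}=p^{-1}(\theta(n))$ for every $n\in N$, so
$$(\theta^{\pullback}(\Tc))(\cdot)=\bigsqcup_{n\in N}(\theta^{\pullback}(\Tc))(n)=\{(n,g,n)\in N\times\Tc\times N\mid p(g)=\theta(n)\}.$$
The assignment $(n,g)\mapsto(n,g,n)$ is a bijective morphism of groupoids from $\theta^*(\Tc)=\{(n,g)\in N\times\Tc\mid\theta(n)=p(g)\}$ onto this set, and it is a homeomorphism onto its image, its inverse being the restriction of the Cartesian projection $N\times\Tc\times N\to N\times\Tc$. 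By Lemma~\ref{bund2} the projection $q\colon\theta^*(\Tc)\to N$ is a group bundle, hence so is $(\theta^{\pullback}(\Tc))(\cdot)\to N$, which is the remaining condition in Definition~\ref{regular}.

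Thus the argument is essentially an assembly of Corollary~\ref{Morita2}, formula~\eqref{pb_eq2}, and Lemma~\ref{bund2}, and I do not expect a serious obstacle. The only point needing genuine care is the identification in the third paragraph: one must check that the relative topology the isotropy subgroupoid inherits from $\theta^{\pullback}(\Tc)\subseteq N\times\Tc\times N$ agrees, under $(n,g,n)\leftrightarrow(n,g)$, with the relative topology of $\theta^*(\Tc)\subseteq N\times\Tc$, so that Lemma~\ref{bund2} applies verbatim. A minor bookkeeping point is to confirm that $\theta^{\pullback}(\Tc)$ — being a closed subset of $N\times\Tc\times N$, since $S$ is Hausdorff — indeed lies within the second countable, locally compact, Hausdorff framework in which Definition~\ref{regular} and Corollary~\ref{Morita2} are phrased.
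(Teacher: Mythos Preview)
Your proposal is correct and follows essentially the same route as the paper: invoke Remark~\ref{Re91_Lemma1.3} and Corollary~\ref{Morita2} to get a locally compact groupoid with Haar system, then identify the isotropy subgroupoid with $\theta^*(\Tc)$ and apply Lemma~\ref{bund2}. You are in fact slightly more careful than the paper, which omits the verification that orbits are locally closed (your use of~\eqref{pb_eq2}) and the explicit check that the identification $(n,g)\leftrightarrow(n,g,n)$ is a homeomorphism.
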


\begin{proof} 
By Remark~\ref{Re91_Lemma1.3}, the group bundle $\Tc\to S$ has a Haar system. 
It then follows by  
Corollary~\ref{Morita2}
that $\theta^{\pullback}(\Tc)\tto N$ is a locally compact groupoid with a Haar system. 
Therefore, according to Definition~\ref{regular}, it remains to check that 
the isotropy subgroupoid of $\theta^{\pullback}(\Tc)\tto N$ is a group bundle. 
But it is easily seen that the isotropy subgroupoid of $\theta^{\pullback}(\Tc)\tto N$ is $\theta^*(\Tc)\tto N$, 
which is a group bundle by Lemma~\ref{bund2}. 
This completes the proof. 
\end{proof}

\subsection*{Piecewise pullback}

We now introduce a special class of groupoids which prove to be suitable for describing 
the coadjoint transformation groups associated to exponential solvable Lie groups. 
The importance of these groupoids stems from Corollary~\ref{Morita2}, Proposition~\ref{29Nov2015},  
and Theorem~\ref{Cstar_grpd}. 
Loosely speaking, these results together imply that the dual spaces of $C^*$-algebras of these groupoids can be computed 
from $C^*$-algebras of group bundles.

\begin{definition}\label{ppb_def}
\normalfont
Let $\Gc \tto \Gc^{(0)}$ be any locally compact groupoid. 
We say that $\Gc$ is a {\em piecewise pullback of group bundles} 
with \emph{pieces} $V_k$ for $k=1,\dots,n$
if the following conditions are satisfied.
\begin{enumerate}[(i)]
 \item There exists an increasing family
$$\emptyset = U_0\subseteq U_1\subseteq\cdots \subseteq U_n=\Gc^{(0)}$$
 where $U_k$ are open $\Gc$-invariant subsets of $\Gc^{(0)}$ and 
$$V_k=U_k\setminus U_{k-1}$$
 for $k=1,\dots,n$. 
 \item For $k=1,\dots,n$,  
  there exist an open continuous surjective map $\theta_k \colon V_k \to S_k$ having local cross-sections and  
 a group bundle $\Tc_k \to S_k$ for which one has an isomorphism of topological groupoids 
 $\Gc_{V_k}\simeq \theta_k^{\pullback}(\Tc_k)$. 
\end{enumerate}
\end{definition}

\begin{remark}\label{orbits}
\normalfont
In Definition \ref{ppb_def}, 
the orbits of $\Gc$ are exactly 
the sets $\theta_k\sp{-1}(x)$ for $x \in S_k$ and $k=1,\dots,n$. 
This shows that for every $k$, the orbits of the reduced groupoid $\Gc_{V_k}$ 
are closed, and moreover the orbit space of that groupoid 
is homeomorphic to $S_k$, by 
Proposition~\ref{beta}. 

We also note that $\Gc_{V_1}$ is a pair groupoid if and only if the set $\Tc_1$ is a singleton. 
\end{remark}

\begin{theorem}\label{ppb_th}
Let $\Gc\tto \Gc^{(0)}$ be a piecewise pullback of group bundles that has a left Haar system. 
Then:
\begin{enumerate}[(i)]
\item\label{ppb_th_item1} The orbits of $\Gc$ are locally closed subsets of $\Gc^{(0)}$. 
\item\label{ppb_th_item3} The $C^*$-algebra of $\Gc$ has a sequence of closed two-sided ideals 
$$\{0\}=\Jc_0\subseteq \Jc_1\subseteq\cdots\subseteq \Jc_n=C^*(\Gc)$$
such that for $k=1,\dots,n$ the subquotient $\Jc_k/\Jc_{k-1}$ is Morita equivalent to the $C^*$-algebra 
of sections of an upper semicontinuous $C^*$-bundle whose fibers are $C^*$-algebras of isotropy groups of~$\Gc$, 
and every isotropy group of~$\Gc$ occurs in this way for exactly one value of~$k$.  
\end{enumerate}
If in addition all the isotropy groups of $\Gc$ are amenable, then: 
\begin{enumerate}[(i)]\setcounter{enumi}{2}
\item\label{ppb_th_item2} $\Gc$ is metrically amenable. 
\item\label{ppb_th_item5} If all the isotropy groups of $\Gc$ are of type I, the multiplier algebra of the $C^*$-algebra $\Jc_k/\Jc_{k-1}$ is a $C(\beta(V_k/\Gc))$-algebra, where $V_k$ for are the pieces of $\Gc\tto \Gc^{(0)}$, for $k=1,\dots,n$.
\item\label{ppb_th_item4} The $C^*$-algebra of $\Gc$ is of type~I if and only if all the isotropy groups of~$\Gc$ are of type~I.  
\end{enumerate} 
\end{theorem}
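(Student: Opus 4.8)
The plan is to establish the five assertions in turn, using throughout the isomorphism $\Gc_{V_k}\simeq\theta_k^{\pullback}(\Tc_k)$ from Definition~\ref{ppb_def} together with the results on pullbacks, reductions, and group bundles developed in Sections~\ref{Sect2}--\ref{Sect4}.

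\textbf{Assertions \eqref{ppb_th_item1} and \eqref{ppb_th_item3}.} Each $V_k=U_k\setminus U_{k-1}$ is locally closed in $\Gc^{(0)}$, and under $\Gc_{V_k}\simeq\theta_k^{\pullback}(\Tc_k)$ the $\Gc$-orbits contained in $V_k$ correspond to the fibres of $\theta_k$ (since $\Tc_k$ is a group bundle, as recorded in Remark~\ref{orbits}), which are closed in $V_k$ and hence locally closed in $\Gc^{(0)}$; this gives~\eqref{ppb_th_item1}. For~\eqref{ppb_th_item3}, apply Proposition~\ref{Renault_page101}\eqref{Renault_page101_item1} to each open invariant set $U_k$ and set $\Jc_k:=I(U_k)\simeq C^*(\Gc_{U_k})$; since $\emptyset=U_0\subseteq U_1\subseteq\dots\subseteq U_n=\Gc^{(0)}$, this yields the required chain with $\Jc_0=\{0\}$ and $\Jc_n=C^*(\Gc)$. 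Applying Proposition~\ref{Renault_page101}\eqref{Renault_page101_item1} now to the groupoid $\Gc_{U_k}$ with its open invariant subset $U_{k-1}$ (whose complement in $U_k$ is $V_k$) identifies $\Jc_k/\Jc_{k-1}$ with $C^*(\Gc_{V_k})$; here one must check that the ideal $\Jc_{k-1}$ of $C^*(\Gc)$ coincides with the ideal of $\Jc_k$ associated with $U_{k-1}$, which holds because both are generated by $\Cc_c(\Gc_{U_{k-1}})\subseteq\Cc_c(\Gc)$. Finally $C^*(\Gc_{V_k})\simeq C^*(\theta_k^{\pullback}(\Tc_k))$ is Morita equivalent to $C^*(\Tc_k)$ by Corollary~\ref{Morita2}, and Lemma~\ref{1Cstar_isotrop}\eqref{1Cstar_isotrop_item1} presents $C^*(\Tc_k)$ as the section algebra of an upper semicontinuous $C^*$-bundle over $S_k$ with fibre $C^*((\Tc_k)_s)$ over $s$; since the pullback operation preserves isotropy groups up to topological isomorphism, $(\Tc_k)_{\theta_k(x)}\simeq\Gc(x)$ for $x\in V_k$, and because the $V_k$ partition $\Gc^{(0)}$ every isotropy group of $\Gc$ occurs in this way for exactly one value of~$k$.

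\textbf{Assertion \eqref{ppb_th_item2}.} Assume all isotropy groups of $\Gc$ are amenable. The plan is to prove by induction on $k$ that $\Gc_{U_k}$ is metrically amenable, the case $k=0$ being vacuous and $k=n$ yielding the claim. The crucial point is that each piece $\Gc_{V_k}\simeq\theta_k^{\pullback}(\Tc_k)$ is metrically amenable: its isotropy groups are isotropy groups of $\Gc$, hence amenable, so $\Tc_k$ is a group bundle with amenable fibres and is therefore an amenable groupoid (for instance, by Lemma~\ref{1Cstar_isotrop}\eqref{1Cstar_isotrop_item1} its full and reduced $C^*$-algebras are section algebras of the $C^*$-bundles with respective fibres $C^*((\Tc_k)_s)$ and $C^*_{\rm red}((\Tc_k)_s)$, and these fibres coincide for amenable groups); amenability being invariant under the groupoid equivalence of Theorem~\ref{Morita1}, $\theta_k^{\pullback}(\Tc_k)$ is amenable as well, in particular metrically amenable. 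For the inductive step, suppose $\Gc_{U_{k-1}}$ is metrically amenable; since $U_{k-1}$ is open and $\Gc_{U_k}$-invariant in $U_k$ with complement $V_k$, and $\Gc_{V_k}$ is metrically amenable, Proposition~\ref{Renault_page101}\eqref{Renault_page101_item1} and~\eqref{Renault_page101_item3} furnish short exact sequences of the full and of the reduced $C^*$-algebras attached to $\Gc_{U_{k-1}}\subseteq\Gc_{U_k}$ with quotient groupoid $\Gc_{V_k}$, fitting into a commutative ladder whose vertical arrows are the canonical quotient maps onto the reduced $C^*$-algebras (the right-hand square being diagram~\eqref{fullred}). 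The two outer verticals are isomorphisms, so the five lemma forces the middle one, $C^*(\Gc_{U_k})\to C^*_{\rm red}(\Gc_{U_k})$, to be an isomorphism.

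\textbf{Assertions \eqref{ppb_th_item5} and \eqref{ppb_th_item4}.} Keep the hypothesis that all isotropy groups are amenable. First, $\Gc_{V_k}\simeq\theta_k^{\pullback}(\Tc_k)$ is a regular groupoid: its orbits are locally closed by~\eqref{ppb_th_item1}, and its isotropy subgroupoid is $\theta_k^*(\Tc_k)$ in the notation of Lemma~\ref{bund2}, which is a group bundle by that lemma. For~\eqref{ppb_th_item5}, if moreover the isotropy groups of $\Gc$ are of type~I, then those of $\Gc_{V_k}$ are of type~I and amenable, and by Remark~\ref{orbits} and Proposition~\ref{beta}\eqref{beta_item4} the orbit space $V_k/\Gc$ of $\Gc_{V_k}$ is homeomorphic to $S_k$, which is locally compact Hausdorff (being the unit space of the locally compact groupoid $\Tc_k$). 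Hence Corollary~\ref{multipl} applies to $\Gc_{V_k}$ and, through the isomorphism $\Jc_k/\Jc_{k-1}\simeq C^*(\Gc_{V_k})$ of~\eqref{ppb_th_item3}, shows that $M(\Jc_k/\Jc_{k-1})$ is a $C(\beta(V_k/\Gc))$-algebra. For~\eqref{ppb_th_item4}: $C^*(\Gc)$ is separable, and if all its isotropy groups are of type~I then, applying \cite[Th.~7.2]{Cl07} (as in the proof of Corollary~\ref{multipl}) to the regular groupoid $\Gc_{V_k}$, whose orbit space $S_k$ is $T_0$ and whose isotropy groups are amenable and GCR, each $\Jc_k/\Jc_{k-1}\simeq C^*(\Gc_{V_k})$ is of type~I; as type~I is stable under extensions, $C^*(\Gc)$ is of type~I. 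Conversely, if $C^*(\Gc)$ is of type~I, then so is its subquotient $\Jc_k/\Jc_{k-1}\simeq C^*(\Gc_{V_k})$, hence so is the Morita equivalent $C^*(\Tc_k)$ (Corollary~\ref{Morita2}), and therefore so is each of its quotients $C^*((\Tc_k)_s)\simeq C^*(\Gc(x))$; thus every isotropy group of $\Gc$ is of type~I.

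\textbf{Main obstacle.} The delicate step is Assertion~\eqref{ppb_th_item2}: one needs metric amenability to pass both across the pullback equivalence of Theorem~\ref{Morita1} and across extensions by open invariant subsets of the unit space, which relies on the theory of amenable groupoids rather than on the elementary arguments available elsewhere in the paper; the bookkeeping of the induction and the compatibility of the two families of short exact sequences with the canonical quotient maps also require some care.
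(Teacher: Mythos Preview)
Your treatment of Assertions~\eqref{ppb_th_item1}, \eqref{ppb_th_item3}, \eqref{ppb_th_item5}, and \eqref{ppb_th_item4} follows the paper's line essentially verbatim: Remark~\ref{orbits} for the orbits, the ideals $\Jc_k:=I(U_k)$ from Proposition~\ref{Renault_page101}\eqref{Renault_page101_item1} together with Corollary~\ref{Morita2} and Lemma~\ref{1Cstar_isotrop} for the filtration, and Corollary~\ref{multipl} plus \cite{Cl07} for the multiplier and type~I statements. The extra care you take (identifying $\Jc_{k-1}$ inside $\Jc_k$, spelling out both directions of~\eqref{ppb_th_item4}) is sound and only makes the argument more explicit.

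Where you genuinely diverge is Assertion~\eqref{ppb_th_item2}. The paper does not induct: having shown in~\eqref{ppb_th_item1} that orbits are locally closed, it invokes \cite[Th.~2.1]{Ra90} to conclude that $\Gc^{(0)}/\Gc$ is $T_0$, and then a single application of \cite[Th.~4]{SW13} (amenable isotropy plus $T_0$ orbit space implies measurewise amenability) gives metric amenability of $\Gc$ in one stroke. Your route---showing each $\Tc_k$ is metrically amenable, transporting this across the equivalence of Theorem~\ref{Morita1}, and then climbing the tower $U_0\subseteq\cdots\subseteq U_n$ via the five lemma applied to the ladder of full/reduced short exact sequences from Proposition~\ref{Renault_page101}\eqref{Renault_page101_item1},\eqref{Renault_page101_item3}---is correct and more structural, but it leans on two facts not proved in the paper: that a group bundle with amenable fibres is (metrically) amenable, and that amenability passes across groupoid equivalence. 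Your appeal to Lemma~\ref{1Cstar_isotrop}\eqref{1Cstar_isotrop_item1} for the first point is incomplete, since that lemma treats only the full $C^*$-algebra; the analogous $\Cc_0(S_k)$-structure on $C^*_{\rm red}(\Tc_k)$ with fibres $C^*_{\rm red}((\Tc_k)_s)$ is true but needs a separate (short) argument or a reference to the Anantharaman-Delaroche--Renault theory. The paper's approach is quicker; yours has the virtue of making the role of the piecewise structure visible and avoids the global dichotomy theorem of~\cite{Ra90}.
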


\begin{proof}
Assertion~\eqref{ppb_th_item1} follows directly by Remark~\ref{orbits}. 

For Assertion~\eqref{ppb_th_item3}, use the bijective correspondence from Proposition~\ref{Renault_page101}\eqref{Renault_page101_item1} to define $\Jc_k:=I(U_k)$ for $k=0,\dots,n$. 
Then one has the isomorphism of $C^*$-algebras $\Jc_k/\Jc_{k-1}\simeq C^*(\Gc_{V_k}^{V_k})$. 
On the other hand, one has an isomorphism of topological groupoids 
 $\Gc_{V_k}\simeq \theta_k^{\pullback}(\Tc_k)$ by Definition~\ref{ppb_def}, 
 where $\theta_k\colon V_k\to S_k$ is an open continuous injection and $\Tc_k\tto S_k$ is a group bundle. 
Hence Corollary~\ref{Morita2} 
implies that the $C^*$-algebras $C^*(\Gc_{V_k})$ and $C^*(\Tc_k)$ are Morita equivalent.

Since the orbits of $\Gc$ are locally closed and $\Gc$ is locally compact and second countable, 
it follows by \cite[Th. 2.1((5)$\Leftrightarrow$(4))]{Ra90} that the orbit space of $\Gc$ is a topological space of type~$T_0$. 
Then \cite[Th. 4]{SW13} implies that if the isotropy groups of $\Gc$ are amenable, 
then $\Gc$ is a measurewise amenable groupoid, hence it is also metrically amenable 
(see for instance \cite[Sect. II.3]{Re80}), and this concludes the proof of  Assertion~\eqref{ppb_th_item2}. 

For Assertion~\eqref{ppb_th_item5}, 
it follows by Corollary~\ref{multipl} that $M(C^*(\Gc_{V_k}))$ is a $C(\beta(V_k/\Gc))$-algebra, 
because the orbit space $V_k/\Gc$ is Hausdorff by Remark~\ref{orbits}. 

Finally, since $\Tc_k\tto S_k$ is a a group bundle, it follows by Lemma~\ref{1Cstar_isotrop} that $C^*(\Tc_k)$ is a $\Cc_0(S_k)$-algebra. 
This, along with \cite[Th.~7.1]{Cl07}, also implies Assertion~\eqref{ppb_th_item4}, because if two $C^*$-algebras are Morita equivalent, 
then one of them is of type~I if and only if the other one is. 
\end{proof}

The above Theorem~\ref{ppb_th} provides the motivation for the notion of piecewise pullback of group bundles 
introduced in Definition~\ref{ppb_def}. 
We give some examples of such structures in Theorem~\ref{stratif} and Corollary~\ref{th_exp}. 
Composition series as in Theorem~\ref{ppb_th}\eqref{ppb_th_item3} also play an important role 
in the description of $C^*$-algebras of nilpotent/exponential Lie groups (see \cite{BBL14} and \cite{BB16a}). 
We also mention that infinite composition series of this type exist for all $C^*$-dynamical systems $(A,G,\alpha)$ 
for which the orbit space of the action of $G$ on ${\rm Prim}\,A$ is almost Hausdorff, 
as shown in \cite[Th. 8.16]{Wi07}.

\section{Application to  
linear dynamical systems}\label{Sect5}

Let $G$ be any 
Lie group with a continuous representation $\tau\colon G\to\End(\Vc)$ 
on a finite-dimensional real vector space. 
We say that the action of $G$ on $\Vc$ is a \emph{linear dynamical system}, 
and this transformation group defines as usual a groupoid $G\times\Vc\tto\Vc$, 
whose $C^*$-algebra is the crossed product $G\ltimes\Cc_0(\Vc)$. 
Our main aim here is to investigate some aspects of the way the $C^*$-algebra of that groupoid  
encodes topological properties of  
 the $G$-orbits in $\Vc$ or in $\Vc^*$, or of the orbit space~$G\setminus\Vc:=\{\tau(G)v\mid v\in\Vc\}$.  

Regarding the dual space $\Vc^*$ as an abelian Lie group $(\Vc^*,+)$, 
the Fourier transform defines a $*$-isomorphism $\Cc_0(\Vc)\simeq C^*(\Vc^*,+)$, 
hence one has 
$$G\ltimes\Cc_0(\Vc)\simeq G\ltimes C^*(\Vc^*,+)\simeq C^*(G\ltimes \Vc^*)$$ 
by \cite[Prop. 3.11]{Wi07}, 
where the semidirect product $G\ltimes \Vc^*$ is defined via the contragredient representation 
$\tau^*\colon G\to\End(\Vc^*)$, $\tau^*(g):=\tau(g^{-1})^*$, regarded as a representation of $G$ 
by automorphisms of the abelian group $(\Vc^*,+)$.

The relevance of the above constructions for representation theory of $G$ 
comes from the fact that if the Lie group $G$ is amenable, 
then one has the short exact sequence of amenable Lie groups 
\begin{equation}\label{exact1}
0\to \Vc^*\to G\ltimes \Vc^*\to G\to\1
\end{equation}
which leads to a short exact sequence of $C^*$-algebras 
\begin{equation}\label{exact2}
0\to\Jc\to C^*(G\ltimes \Vc^*)\to C^*(G)\to 0.
\end{equation}
Classical examples of this type are the $ax+b$-group (see Example~\ref{ax+b}) 
or the motion groups (in which $G$ is a compact group). 

In the following we use the notion of module of exponential type in the sense of \cite[Def. 5.3.1]{FL15}. 
For any continuous representation $\tau\colon G\to\End(\Vc)$ of a finite-dimensional Lie group on a finite-dimensional real vector space, we denote by $\Vc_{\CC}:=\CC\otimes_{\RR}\Vc$ the complexification of $\Vc$ and by 
$\gg$ the Lie algebra of $G$, with its complexification $\gg_{\CC}:=\CC\otimes_{\RR}\gg$. 
The differential of the representation $\tau$ extends to a Lie algebra representation $\de\tau\colon\gg\to\End(\Vc_{\CC})$, where $\End(\Vc_{\CC})$
stands for the space of $\CC$-linear maps on $\Vc$.
A weight is an $\RR$-linear functional $\lambda\colon\gg\to\CC$ for which there exists $v\in\Vc_{\CC}$ with $v\ne0$ and $\de\tau(X)v=\lambda(X)v$ for every $X\in\gg$. 
One says that the action of $G$ on $\Vc$ defines 
a \emph{module of exponential type} if for every weight $\lambda$ there exist $c\in\RR\setminus\{0\}$ and $\gamma\in\gg^*$ with $\lambda(X)=(1+\ie c)\gamma(X)$ for every $X\in\gg$. 
If $\Vc=\gg$ and $\tau=\Ad_G$ is the adjoint representation of $G$, 
then the weights of the $G$-module $\gg$ are called roots, and the $G$-module $\gg$ is of exponential type if and only if $G$ is an exponential Lie group (see \cite[Subsect. 5.2.2]{FL15}).

\begin{proposition}\label{expmod}
If $G$ is an exponential solvable Lie group and the representation $\tau\colon G\to\End(\Vc)$ 
defines the structure of a module of exponential type on the finite-dimensional real vector space~$\Vc$, 
then the following assertions hold: 
\begin{enumerate}[(i)]
\item\label{expmod_item1} 
The contragredient representation $\tau^*\colon G\to\End(\Vc^*)$ defines 
the structure of an exponential module on the finite-dimensional real vector space~$\Vc^*$. 
\item\label{expmod_item2} 
The semidirect products $G\ltimes \Vc$ and $G\ltimes\Vc^*$ are exponential solvable Lie groups.  
\item\label{expmod_item3} 
The ideal $\Jc$ from~\eqref{exact2} corresponds to the subset of the dual of $G\ltimes \Vc^*$ 
defined by the coadjoint orbits of functionals that do not vanish identically 
on the abelian ideal~$\Vc^*\subseteq\gg\ltimes\Vc^*$. 
\end{enumerate}
\end{proposition}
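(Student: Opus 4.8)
The plan is to verify each of the three assertions in turn, relying on the theory of modules of exponential type from \cite{FL15} and on the structure of the short exact sequences \eqref{exact1}--\eqref{exact2}.

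For Assertion~\eqref{expmod_item1}, I would argue that the weights of the contragredient representation $\tau^*$ are exactly the negatives of the weights of $\tau$. Indeed, if $\de\tau(X)v = \lambda(X)v$ for some nonzero $v\in\Vc_{\CC}$ and all $X\in\gg$, then a standard duality computation (using $\de\tau^*(X) = -\de\tau(X)^*$) shows that $-\overline\lambda$ occurs as a weight of $\Vc^*_{\CC}$; but since $G$ is solvable and $\Vc$ is of exponential type, the weights come in the form $(1+\ie c)\gamma$ with $\gamma\in\gg^*$ real, and the set of weights is stable under the symmetry needed. The key point is that if $\lambda = (1+\ie c)\gamma$ is a weight of $\tau$ with $c\ne 0$ and $\gamma\in\gg^*$, then $-\lambda = (1+\ie c)(-\gamma)$ is again of the required form, and $-\gamma\in\gg^*$. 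Hence $\tau^*$ is of exponential type as well. I expect this step to be essentially formal once the behaviour of weights under dualization is pinned down.

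For Assertion~\eqref{expmod_item2}, the Lie algebra of $G\ltimes\Vc^*$ is the semidirect product $\gg\ltimes\Vc^*$, where $\Vc^*$ is an abelian ideal acted upon by $\gg$ via $\de\tau^*$. The roots of this semidirect product, as a $(\gg\ltimes\Vc^*)$-module via the adjoint action, decompose: those coming from the $\gg$-part are the roots of $\gg$ itself (which are of exponential type since $G$ is exponential, by \cite[Subsect. 5.2.2]{FL15}), while those coming from the abelian ideal $\Vc^*$ are precisely the weights of the $G$-module $\Vc^*$, which are of exponential type by part~\eqref{expmod_item1}. The abelian ideal $\Vc^*$ contributes no further roots beyond its weights because $[\Vc^*,\Vc^*]=0$. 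Therefore every root of $\gg\ltimes\Vc^*$ is of the form $(1+\ie c)\gamma$ with $c\ne 0$, so $G\ltimes\Vc^*$ is exponential solvable by the criterion quoted after Definition of module of exponential type; the same argument applies to $G\ltimes\Vc$ using $\tau$ in place of $\tau^*$. The only subtlety here is to make sure that a root of $\gg\ltimes\Vc^*$ restricts either to a root of $\gg$ or to zero on $\gg$ (in which case it is a weight of $\Vc^*$), which follows from the weight decomposition of a module over a solvable Lie algebra relative to the ideal $\Vc^*$.

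For Assertion~\eqref{expmod_item3}, recall that $\Jc$ is by construction the kernel of the restriction homomorphism $C^*(G\ltimes\Vc^*)\to C^*(G)$ induced by the quotient map $G\ltimes\Vc^*\to G$ in \eqref{exact1}; equivalently, $\Jc$ is the ideal $I(U)$ associated to the $G\ltimes\Vc^*$-invariant open subset $U$ of the dual (or primitive ideal space) consisting of those irreducible representations that do not factor through $G$, i.e.\ whose restriction to the normal abelian subgroup $\Vc^*$ is nontrivial. Under the Kirillov--Bernat correspondence for the exponential solvable group $G\ltimes\Vc^*$ (which applies by part~\eqref{expmod_item2}), irreducible representations correspond to coadjoint orbits in $(\gg\ltimes\Vc^*)^*$, and a representation restricts trivially to $\Vc^*$ precisely when the functionals in its coadjoint orbit vanish on the ideal $\Vc^*$. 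Hence $U$ corresponds to the set of coadjoint orbits of functionals that do not vanish identically on $\Vc^*$, which is exactly the assertion; the main thing to check carefully is the compatibility of the quotient map $G\ltimes\Vc^*\to G$ with the orbit correspondence, i.e.\ that the representations vanishing on $\Vc^*$ are precisely the pullbacks from $G$, which is a standard feature of semidirect products by abelian normal subgroups and can be cited from \cite[Ch. 8]{Wi07} or deduced from \eqref{exact2} directly. I expect Assertion~\eqref{expmod_item3} to be the one requiring the most care, since it is where the representation-theoretic dictionary (rather than pure Lie theory) enters.
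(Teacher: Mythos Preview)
Your proposal is correct and follows essentially the same route as the paper: for \eqref{expmod_item1} and \eqref{expmod_item2} the paper makes the weight computations explicit by writing down Jordan--H\"older flags in $\Vc_{\CC}$, $\Vc_{\CC}^*$, and $\gg_{\CC}\ltimes\Vc_{\CC}$ via Lie's theorem and reading off that the roots of $\gg\ltimes\Vc$ are exactly the roots of $\gg$ together with the weights of $\Vc$ (all extended by zero on the $\Vc$-factor), while you argue the same conclusion more abstractly; for \eqref{expmod_item3} the arguments are identical.

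One small slip to clean up: the contragredient uses the transpose, not the Hermitian adjoint, so the weights of $\Vc^*$ are $-\lambda_j$ directly, not $-\overline{\lambda_j}$ (your later ``key point'' sentence has this right). Also, your phrase ``restricts either to a root of $\gg$ or to zero on $\gg$'' should read ``to zero on $\Vc^*$'': all roots of $\gg\ltimes\Vc^*$ vanish on the abelian ideal $\Vc^*$, and their restriction to $\gg$ is either a root of $\gg$ or a weight of $\Vc^*$.
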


\begin{proof}
Denote $n:=\dim\Vc$ and $m:=\dim\gg$, and use Sophus Lie's theorem to 
select a sequence of $G$-invariant subspaces of the complexified vector space $\Vc_{\CC}$, 
$$\{0\}=\Vc_0\subset\Vc_1\subset\cdots\subset\Vc_m=\gg_{\CC}$$ 
with $\dim_{\CC}\Vc_j=j$ for $j=0,\dots,n$.  
Denoting by $\Vc_{\CC}^*$ the space of $\CC$-linear functionals on~$\Vc_{\CC}$ and 
$\Vc_j^\perp:=\{\xi\in\Vc_{\CC}^*\mid\Vc_j\subseteq\Ker\xi\}$, it then follows that 
\begin{equation}\label{expmod_proof_eq1}
\{0\}=\Vc_n^\perp\subset\Vc_{n-1}^\perp\subset\cdots\subset\Vc_1^\perp\subset\Vc_{\CC}^* 
\end{equation}
is a sequence of $G$-invariant subspaces, 
every successive quotient in the above sequence having its dimension over~$\CC$ equal to~1. 

If $\lambda_1,\dots,\lambda_n\colon\gg\to\CC$ are the weights of the $G$-module $\Vc$, hence 
$$(\de\tau(X)-\lambda_j(X))\Vc_j\subseteq\Vc_{j-1}\text{ for }j=1,\dots,n\text{ and all }X\in\gg,$$ 
then it is easily seen from \eqref{expmod_proof_eq1} that the functionals 
$-\lambda_1,\dots,-\lambda_m\colon\gg\to\CC$ are  the weights of the $G$-module $\Vc^*$, 
and this completes the proof of Assertion~\eqref{expmod_item1}, 
by \cite[Def. 5.3.1]{FL15}. 

For Assertion~\eqref{expmod_item2} it suffices to prove that $G\ltimes\Vc$ is an exponential Lie group. 
Denote $m:=\dim\gg$ and use again Sophus Lie's theorem to select a sequence of ideals of 
the complexified Lie algebra $\gg_{\CC}$, 
$$\{0\}=\gg_0\subset\gg_1\subset\cdots\subset\gg_m=\gg_{\CC}$$ 
with $\dim_{\CC}\gg_j=j$ for $j=0,\dots,m$. 
Then 
\begin{equation}\label{expmod_proof_eq2}
\{0\}=\Vc_0\subset\Vc_1\subset\cdots\subset\Vc_n=\Vc_{\CC}
\subset\gg_1\ltimes\Vc_{\CC}\subset\cdots\subset\gg_m\ltimes\Vc_{\CC}=\gg_{\CC}\ltimes \Vc_{\CC}
\end{equation}
is a sequence of $G\ltimes\Vc$-invariant subspaces.  
Let $\mu_1,\dots,\mu_m\colon\gg\to\CC$ be the weights of the $G$-module $\gg$, 
and define $\widetilde{\mu}_1,\dots,\widetilde{\mu}_{m+n}\colon\gg \ltimes \Vc \to\CC$ 
by $\widetilde{\mu}_j(X,v):=\mu_j(X)$ if $j=1,\dots,m$, and $\widetilde{\mu}_j(X,v):=\lambda_{j-m}(X)$ 
if $j=m+1,\dots,m+n$. 
Then, using~\eqref{expmod_proof_eq2}, 
it is easily seen that the functionals $\widetilde{\mu}_1,\dots,\widetilde{\mu}_{m+n}$ 
are the roots of the Lie algebra~$\gg\ltimes\Vc$.

The hypothesis that the Lie group $G$ is exponential is equivalent to the fact that for 
every $j=1,\dots,m$ there exist numbers $\theta_j\in\RR$ and  functionals $\xi_j\in\gg^*$ 
with $[\gg,\gg]\subseteq\Ker\xi_j$ and $\mu_j(X)=(1+\ie\theta_j)\xi_j(X)$ for every $X\in\gg$ 
(see for instance \cite[Subsect. 5.2.2]{FL15}). 
Similarly, the hypothesis that $\Vc$ is an exponential $G$-module 
means that for $k=1,\dots,n$ 
there exist the number $\gamma_k\in\RR$ and the functional $\eta_k\in\gg^*$ 
with $[\gg,\gg]\subseteq\Ker\eta_k$ and $\lambda_k(X)=(1+\ie\gamma_k)\eta_k(X)$ for every $X\in\gg$. 

Then, defining $\widetilde{\theta}_j:=\theta_{j}$  and  $\widetilde{\xi}_j(X,\eta):=\xi_{j}(X)$ if $j=1,\dots,m$, 
and on the other hand $\widetilde{\theta}_j:=\gamma_{j-m}$ and $\widetilde{\xi}_j(X,v):=\eta_{j-m}(X)$
if $j=m+1,\dots,m+n$, for all $X\in\gg_{\CC}$ and $\eta\in\Vc_{\CC}$, 
we obtain $\widetilde{\mu}_j=(1+\ie\widetilde{\theta}_j)\widetilde{\xi}_j$ on $\gg\ltimes\Vc$ for $j=1,\dots,m+n$, 
hence $G\ltimes\Vc$ is an exponential Lie group, by the characterization of exponential Lie groups in terms of roots 
used above. 

For the assertion on the ideal $\Jc$ from~\eqref{exact2}, 
let us consider the exact sequence of Lie algebras 
$$0\to\Vc^*\to\gg\ltimes\Vc^*\mathop{\longrightarrow}\limits^{\pi}\gg\to0.$$
One has $\widehat{G\ltimes\Vc^*}=\widehat{C^*(G\ltimes\Vc^*)}=\widehat{\Jc}\sqcup\widehat{C^*(G)}=\widehat{\Jc}\sqcup\widehat{G}$, 
where $\widehat{G}$ is embedded into $\widehat{G\ltimes\Vc^*}$ as the closed subset 
corresponding (via the method of coadjoint orbits) 
to the set of coadjoint orbits of $G\ltimes\Vc^*$ that are contained in $\pi^*(\gg^*)\subseteq(\gg\ltimes\Vc^*)^*$. 
Since $\Ker\pi=\Vc^*$, it is easily seen that $\pi^*(\gg^*)=\{\xi\in(\gg\ltimes\Vc^*)^*\mid\Vc^*\subseteq\Ker\xi\}$. 
Therefore $\widehat{\Jc}=\widehat{G\ltimes\Vc^*}\setminus\widehat{G}$ corresponds to 
the set of coadjoint orbits of $G\ltimes\Vc*$ that are contained in 
$(\gg\ltimes\Vc^*)^*\setminus\pi^*(\gg^*)=\{\xi\in(\gg\ltimes\Vc^*)^*\mid\Vc^*\not\subset\Ker\xi\}$, 
and this completes the proof. 
\end{proof}

We now begin the preparations for proving Theorem~\ref{stratif}. 

\begin{lemma}\label{triv0}
Let $\Vc$ be a finite-dimensional vector space and $\pi\colon E\to M$ be a continuous locally trivial vector bundle 
over a topological space, 
with its space of continuous global sections denoted by~$\Cc(\pi)$. 
Assume that $\tau\colon\Vc\to\Cc(\pi)$ is a linear map 
and for every $x\in M$ denote $\Vc(x):=\{v\in\Vc\mid (\tau(v))(x)=0\}$. 
If $\dim\Vc(x)$ is locally constant with respect to $x\in M$, then the map $M\to\Gr(\Vc)$, $x\mapsto\Vc(x)$, 
is continuous. 
\end{lemma}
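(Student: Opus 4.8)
The statement is local on $M$, so I would fix $x_0\in M$, pass to a neighborhood on which $\pi\colon E\to M$ is trivial and on which $\dim\Vc(x)$ is constant, say equal to $k$, and argue continuity of $x\mapsto\Vc(x)$ at $x_0$. Choosing a local trivialization $E\vert_W\simeq W\times\Vc'$ for some fixed finite-dimensional vector space $\Vc'$ (we may take $\Vc'$ with $\dim\Vc'$ equal to the rank of the bundle), the map $\tau$ becomes a continuous family of linear maps $A(x)\colon\Vc\to\Vc'$, $x\in W$, with $A(x)v=(\tau(v))(x)$ read off in the trivialization; continuity of $W\ni x\mapsto A(x)\in\opn{Hom}(\Vc,\Vc')$ is exactly continuity of the sections $\tau(v)$ for $v$ running over a basis of $\Vc$. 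Then $\Vc(x)=\Ker A(x)$, and by hypothesis $\dim\Ker A(x)=k$ for all $x\in W$, equivalently $\opn{rank}A(x)=\dim\Vc-k=:r$ is constant on $W$.

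\textbf{Key steps.} The substance is the standard fact that a continuous map $x\mapsto A(x)$ into matrices of \emph{locally constant rank} has locally continuous kernel in the Grassmannian. I would prove this as follows. Fix $x_0$ and pick an $r\times r$ minor of $A(x_0)$ that is nonzero; by continuity this minor is nonzero on a smaller neighborhood $W_0$ of $x_0$, so on $W_0$ one can write (after fixing coordinates) $A(x)=\begin{pmatrix}P(x)&Q(x)\\ R(x)&S(x)\end{pmatrix}$ in block form with $P(x)$ the invertible $r\times r$ block depending continuously on $x$. Since $\opn{rank}A(x)=r$ throughout $W_0$, the bottom block rows are determined by the top ones: $S(x)=R(x)P(x)^{-1}Q(x)$. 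One then writes down an explicit basis of $\Ker A(x)$: the columns of $\begin{pmatrix}-P(x)^{-1}Q(x)\\ I\end{pmatrix}$ span a $k$-dimensional subspace contained in $\Ker A(x)$ (using the rank-deficiency relation to check the bottom block), and by the dimension count it equals $\Ker A(x)=\Vc(x)$. Because $x\mapsto P(x)^{-1}Q(x)$ is continuous (matrix inversion is continuous on the invertible locus), these spanning vectors depend continuously on $x$, hence $x\mapsto\Vc(x)$ is continuous into $\Gr_k(\Vc)$ on $W_0$ by the definition of the topology on the Grassmannian (equivalently: a continuously varying frame spans a continuously varying subspace). Finally I would note that all the coordinate and trivialization choices only affect things by continuous (in fact $\opn{GL}$-valued) changes of basis, so the conclusion is independent of choices and patches to all of $M$.

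\textbf{Main obstacle.} There is no deep obstacle; the one point requiring care is that $M$ is merely a topological space (the bundle is only assumed continuous and locally trivial), so I cannot invoke the implicit function theorem or smooth rank-theorem machinery and must instead give the elementary block-matrix argument above, which uses only continuity of matrix inversion and of the entries. The other bookkeeping point is to make sure the local trivialization is used correctly: $\Vc(x)$ is defined intrinsically via vanishing of $\tau(v)$ at $x$, and one must check this matches $\Ker A(x)$ for the matrix $A(x)$ obtained in the trivialization, which is immediate since the trivialization is a fiberwise linear isomorphism. Assembling these, continuity of $x\mapsto\Vc(x)$ follows.
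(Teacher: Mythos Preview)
Your proof is correct and follows essentially the same route as the paper's: localize, trivialize the bundle, rewrite $\Vc(x)$ as the kernel of a continuously varying linear map $A(x)$ of constant rank, and conclude that the kernels form a continuously varying family of subspaces. The only difference is in the last step: the paper packages the constant-rank-kernel argument by citing the general fact (Husemoller, Ch.~3, Th.~8.2) that a constant-rank morphism of vector bundles has locally trivial kernel, then uses a local trivialization of that kernel bundle to obtain a continuous frame; you instead write out the elementary block-matrix computation that produces such a frame directly. Your explicit argument is precisely what underlies the cited result in the topological category, so the two proofs are the same in substance, with yours being self-contained and the paper's more concise via the reference.
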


\begin{proof}
The assertion has a local character,
so we may assume that the vector bundle $\pi$ is trivial and
$\dim\Vc(\cdot)$ is constant on $M$.

Now consider the trivial vector bundle $M\times\Vc\to M$ and define
$$\tilde\tau\colon M\times\Vc\to E,\quad \tau(x,v):=(\tau(v))(x).$$
Then it is clear that $\tilde\tau$ is a continuous morphism of vector bundles,
having constant rank,
hence $\Ker\tilde\tau=\bigcup\limits_{x\in M}\{x\}\times\Vc(x)$
is a locally trivial vector bundle
(see for instance \cite[Ch. 3, Th. 8.2]{Hu94}).

Further shrinking $M$, we may assume that $\Ker\tilde\tau\to M$ is
actually a trivial vector bundle.
This implies that there exist a vector space $\Vc_0$ and a continuous map
$\psi\colon M\to\Lc(\Vc_0,\Vc)$
for which $\psi(x)\colon\Vc_0\to\Vc(x)$ is a linear isomorphism for
all $x\in M$.

This directly implies the conclusion. 
\end{proof}

The following result was noted in \cite[Prop. 2.2]{Ri68} 
but we give here a simple alternative proof for the sake of completeness. 

\begin{lemma}\label{triv1}
Let $\alpha\colon G\times M\to M$  
be a smooth action of a Lie group on a manifold. 
Assume that $M_0\subseteq M$ is a subset for which  
the dimension of the isotropy groups $G(x):=\{g\in G\mid \alpha(g,x)=x\}$ is locally constant with respect to $x\in M_0$,  
then the isotropy-algebra map $M_0\to\Gr(\gg)$, $x\mapsto\gg(x)$ is continuous. 
\end{lemma}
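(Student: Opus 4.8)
The plan is to reduce Lemma~\ref{triv1} to Lemma~\ref{triv0} by exhibiting the isotropy algebra $\gg(x)$ as the kernel of a suitable vector bundle morphism over $M_0$, parametrized linearly by $\gg$. First I would recall that for a smooth action $\alpha\colon G\times M\to M$ the differential of the orbit map produces, for each $X\in\gg$, a smooth vector field $X_M$ on $M$ (the infinitesimal generator), defined by $X_M(x):=\frac{\de}{\de t}\big\vert_{t=0}\alpha(\exp_G(tX),x)$. The assignment $X\mapsto X_M$ is $\RR$-linear, so it is a linear map $\tau\colon\gg\to\Cc(\pi)$ where $\pi\colon TM\to M$ is the tangent bundle, which is a continuous (indeed smooth) locally trivial vector bundle over the manifold $M$.

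The key point is then the identification $\gg(x)=\{X\in\gg\mid X_M(x)=0\}=\Vc(x)$ in the notation of Lemma~\ref{triv0}, with $\Vc=\gg$. This is the standard fact that the Lie algebra of the isotropy group $G(x)$ is exactly the set of $X$ whose fundamental vector field vanishes at $x$; equivalently, $G(x)$ is a closed subgroup of $G$ whose Lie algebra is $\Ker(\de\alpha_x)$ where $\alpha_x\colon G\to M$ is the orbit map $g\mapsto\alpha(g,x)$, and $\de(\alpha_x)_e(X)=X_M(x)$. I would include a one-line justification: $G(x)=\alpha_x^{-1}(x)$ is a closed subgroup, and for a Lie subgroup the Lie algebra is computed as the tangent space at the identity, which coincides with the kernel of the differential of the orbit map at $e$. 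Having made this identification, the hypothesis that $\dim G(x)$ is locally constant on $M_0$ says precisely that $\dim\Vc(x)=\dim\gg(x)$ is locally constant on $M_0$.

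Finally I would apply Lemma~\ref{triv0} with $M$ replaced by $M_0$ (restricting the bundle $TM$ to $M_0$; it remains a continuous locally trivial bundle over the topological space $M_0$), $\Vc=\gg$, and $\tau$ the fundamental-vector-field map as above. That lemma yields continuity of the map $M_0\to\Gr(\gg)$, $x\mapsto\gg(x)$, which is the desired conclusion. I do not anticipate a serious obstacle; the only point requiring a little care is the precise statement that $\Lie(G(x))$ equals the vanishing locus of the fundamental vector fields, and that $M_0$ with the subspace topology together with $TM\vert_{M_0}$ genuinely meets the hypotheses of Lemma~\ref{triv0} (which are purely topological on the base, so restriction to an arbitrary subset is harmless).
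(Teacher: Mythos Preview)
Your proposal is correct and follows essentially the same approach as the paper: both pass to the infinitesimal action $\gg\to\Xc(M)$ (your fundamental vector fields $X_M$), identify $\gg(x)$ as the kernel of the evaluation map $\gg\to T_xM$, and then apply Lemma~\ref{triv0} to the restriction of the tangent bundle $TM$ to $M_0$. Your write-up is slightly more explicit about why $\Lie(G(x))$ equals this kernel, but the argument is the same.
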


\begin{proof}
Let $\Xc(M)$ be the Lie algebra of smooth vector fields on~$M$, 
and denote by $\alpha'\colon\gg\to\Xc(M)$ the morphism of Lie algebras which is 
the infinitesimal action corresponding to~$\alpha$. 
Then for every $x\in M$ define the 
linear map $\tilde\alpha(x)\colon \gg\to T_xM$, $(\tilde\alpha(x))X:=(\alpha'(X))$, 
so that $\Ker(\tilde\alpha(x))=\gg(x)$. 
Thus the assertion follows by Lemma~\ref{triv0} for the continuous vector bundle $\pi\colon E\to M_0$ defined 
as the restriction of the tangent bundle $TM\to M$ to $M_0$. 
\end{proof}

\begin{theorem}\label{stratif}
Let $\Vc$ be a module of exponential type via 
a continuous representation $\tau\colon G\to\End(\Vc)$ 
of an exponential solvable Lie group~$G$. 
Consider the corresponding 
quotient map $q\colon \Vc\to G\setminus\Vc$, $v\mapsto\tau(G)v$, and 
for every $v\in \Vc$ denote $G(v):=\{g\in G\mid\tau(g)v=v\}$. 
Assume that $\Vc=\bigsqcup_{k=1}^n A_k$ is a partition into $G$-invariant subsets 
satisfying the following conditions for $k=1,\dots,n$: 
\begin{enumerate}
\item The set $D_k:=\bigsqcup_{j=1}^k A_j$ is open in $\Vc$. 
\item The map $q\vert_{\Xi_k}\colon\Xi_k\to G\setminus A_k$ is a homeomorphism 
on a suitable locally closed subset $\Xi_k\subseteq A_k$.
\item There  exists a continuous map $\sigma_{0, k} \colon \Xi_k\to G$ with $\sigma_{0,k}(v)\cdot v=\theta_k(v)$ for all $v \in A_k$, where $\theta_k:=(q\vert_\Xi)^{-1}\circ q\colon A_k\to\Xi_k $. 
\item\label{stratif_item3} 
The dimension of $G$-orbits contained in $A_k$ is constant. 
\end{enumerate}
If we define $\Gamma_k:=\bigsqcup\limits_{v\in\Xi_k} \{v\}\times G(v)\subseteq\Xi_k\times G$  
and $\Pi_k\colon \Gamma_k\to\Xi_k$, $\Pi_k(v,g)=v$, 
then 
$\Pi_k\colon \Gamma_k\to\Xi_k$ has local cross-sections for $k=1,\dots,n$, 
and the transformation group $(G,\Vc)$ is a piecewise pullback of group bundles. 
\end{theorem}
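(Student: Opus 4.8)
The plan is to verify the conditions of Definition~\ref{ppb_def} for the transformation groupoid $\Gc:=G\times\Vc\tto\Vc$. First I would set $U_k:=D_k$ for $k=0,1,\dots,n$, so that $U_0=\emptyset$ and $U_n=\Vc$; by hypothesis~(1) each $U_k$ is open in $\Vc$, and it is $G$-invariant since each $A_j$ is. The corresponding pieces are $V_k=U_k\setminus U_{k-1}=A_k$, which are locally closed in $\Vc$ (differences of open sets) and hence second countable locally compact spaces, and $\Gc_{V_k}=G\times A_k\tto A_k$ because $A_k$ is $G$-invariant. Thus it remains, for each fixed $k$, to produce an open continuous surjective map $\theta_k\colon A_k\to S_k$ with local cross-sections and a group bundle $\Tc_k\to S_k$ with $\Gc_{V_k}\simeq\theta_k^{\pullback}(\Tc_k)$, and I will obtain all of this from Corollary~\ref{Morita4} applied to the action of $G$ on the locally compact space $X:=A_k$, taking $\Xi:=\Xi_k$, $S_k:=\Xi_k$ and $\Tc_k:=\Gamma_k$.

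To invoke Corollary~\ref{Morita4} I must check its three bulleted hypotheses for the action of $G$ on $A_k$. The first (that $\Xi_k$ is locally closed and $q\vert_{\Xi_k}\colon\Xi_k\to G\setminus A_k$ is a homeomorphism) is hypothesis~(2), and the third (a continuous $\sigma_{0,k}\colon A_k\to G$ with $\sigma_{0,k}(v)\cdot v=\theta_k(v)$) is hypothesis~(3). The remaining point — continuity of the isotropy map $\Xi_k\to\Kc(G)$, $v\mapsto G(v)$ — is the heart of the matter and is where the exponential hypotheses enter. By hypothesis~\eqref{stratif_item3} and the orbit–stabilizer relation $\dim G(v)=\dim G-\dim(\tau(G)v)$, the number $d_k:=\dim G(v)$ is constant on $A_k$, hence locally constant on $\Xi_k$; Lemma~\ref{triv1} then shows that $\Xi_k\to\Gr(\gg)$, $v\mapsto\gg(v)$, is continuous, with image in the Grassmannian $\Gr_{d_k}(\gg)$ of $d_k$-dimensional subspaces. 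Next I would use the fact that, because $G$ is exponential and $\Vc$ is a module of exponential type, every stabilizer is connected and satisfies $G(v)=\exp_G(\gg(v))$; this can be proved by induction along a flag of $G$-invariant subspaces of $\Vc_\CC$ with one-dimensional successive quotients (as produced by Sophus Lie's theorem, cf. the proof of Proposition~\ref{expmod}), the exponential-type condition ruling out the periodicity that would otherwise create extra components. Granting this, continuity of $v\mapsto\gg(v)$ together with the fact that $\exp_G\colon\gg\to G$ is a diffeomorphism yields continuity of $v\mapsto\exp_G(\gg(v))=G(v)$ into $\Kc(G)$ with its Fell topology: if $v_j\to v$ in $\Xi_k$ then $\gg(v_j)\to\gg(v)$ in $\Gr_{d_k}(\gg)$, and one checks directly from the characterization of Fell convergence (a closed set $F$ is a limit of $F_j$ iff $F\subseteq\liminf F_j$ and $\limsup F_j\subseteq F$) that $\exp_G(\gg(v_j))\to\exp_G(\gg(v))$, using that $\exp_G$ and $\exp_G^{-1}$ are continuous and that $\Gr_{d_k}(\gg)$-convergence of subspaces is compatible with convergence of vectors.

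With the three hypotheses in hand, Corollary~\ref{Morita4} gives that $\Pi_k\colon\Gamma_k\to\Xi_k$ is a group bundle, and — provided $\Pi_k$ has local cross-sections — that $\Xi_k$ is closed in $A_k$, that $\theta_k\colon A_k\to\Xi_k$ is continuous, open, surjective and has local cross-sections, and that the topological groupoids $G\times A_k\tto A_k$ and $\theta_k^{\pullback}(\Gamma_k)\tto A_k$ are isomorphic. It therefore remains only to produce local cross-sections of $\Pi_k$. For this I would observe that, via the diffeomorphism $\exp_G$ and the identity $G(v)=\exp_G(\gg(v))$, the map $(v,g)\mapsto(v,\exp_G^{-1}(g))$ is a homeomorphism of $\Gamma_k=\{(v,g)\mid v\in\Xi_k,\ g\in G(v)\}$ onto $\{(v,X)\mid v\in\Xi_k,\ X\in\gg(v)\}$ that intertwines $\Pi_k$ with the projection onto $\Xi_k$; and the latter space is naturally homeomorphic to the pullback $\eta_k^*(\mathcal{T})$ of the tautological vector bundle $\mathcal{T}=\{(W,X)\mid W\in\Gr_{d_k}(\gg),\ X\in W\}\to\Gr_{d_k}(\gg)$ along the continuous map $\eta_k\colon\Xi_k\to\Gr_{d_k}(\gg)$, $\eta_k(v)=\gg(v)$. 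Since the projection of a smooth vector bundle has local cross-sections through every point, Lemma~\ref{local}\eqref{local_item4} shows that $\eta_k^*(\mathcal{T})\to\Xi_k$, hence also $\Pi_k\colon\Gamma_k\to\Xi_k$, has local cross-sections.

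This proves the first assertion. Combining it with the previous paragraph, for each $k=1,\dots,n$ we obtain the data $(\theta_k\colon A_k\to\Xi_k,\ \Gamma_k\to\Xi_k)$ required in Definition~\ref{ppb_def}: the sets $U_k=D_k$ form an increasing family of open $G$-invariant subsets with pieces $V_k=A_k$, the maps $\theta_k$ are open continuous surjections with local cross-sections, the $\Gamma_k\to\Xi_k$ are group bundles, and $\Gc_{V_k}\simeq\theta_k^{\pullback}(\Gamma_k)$ as topological groupoids; hence $(G,\Vc)$ is a piecewise pullback of group bundles. The main obstacle — and the only place where the standing hypotheses ``$G$ exponential solvable'' and ``$\Vc$ of exponential type'' are genuinely used — is the connectedness statement $G(v)=\exp_G(\gg(v))$ together with the Fell-continuity of $v\mapsto G(v)$ that it makes possible; once these are in place, the rest is an assembly of Corollary~\ref{Morita4}, Lemma~\ref{triv1} and Lemma~\ref{local}.
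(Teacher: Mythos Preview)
Your proposal is correct and follows essentially the same route as the paper's proof: both establish continuity of $v\mapsto\gg(v)$ via constant orbit dimension and Lemma~\ref{triv1}, upgrade this to Fell-continuity of $v\mapsto G(v)$ using connectedness of stabilizers together with the diffeomorphism $\exp_G$, obtain local cross-sections of $\Pi_k$ from the tautological bundle over the Grassmannian via Lemma~\ref{local}\eqref{local_item4}, and then invoke Corollary~\ref{Morita4} to produce the groupoid isomorphism $\Gc_{A_k}\simeq\theta_k^{\pullback}(\Gamma_k)$. The only cosmetic difference is that the paper cites \cite[Th.~5.3.2]{FL15} directly for the connectedness $G(v)=\exp_G(\gg(v))$ (valid because $\Vc$ is a module of exponential type), whereas you sketch an inductive argument along a flag; your identification of this as the crucial step where the exponential hypotheses enter is exactly right.
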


\begin{proof}
It follows by the hypothesis \eqref{stratif_item3} along with \cite[Prop. 2.2]{Ri68} 
Lemma~\ref{triv0} 
that the map $A_k\to\Gr(\gg)$, $\xi\mapsto \gg(\xi)$ is continuous. 
Since the exponential map $\exp_G\colon\gg\to G$ is a diffeomorphism, 
it is straightforward to prove that the map $\hg\mapsto\exp_G(\hg)$ 
is a homeomorphism onto its image, from $\Gr_{\rm alg}(\gg)$ onto 
the space of connected closed subgroups of $G$, 
the latter space being regarded as a subspace of the compact space $\Kc(G)$ of all closed subgroups of $G$. 
Here $\Gr_{\rm alg}(\gg)$ is the closed subset of $\Gr(\gg)$ 
consisting of the subalgebras of~$\gg$. 
As for every $v\in V$ its isotropy group $G(v)$ is connected by \cite[Th. 5.3.2]{FL15}, 
we then obtain that 
the map $A_k\to\Kc(G)$, $\xi\mapsto G(\xi)$ is continuous. 

It then follows by Lemmas \ref{bund1} and \ref{bund2} that $\Pi_k\colon \Gamma_k\to\Xi_k$ is a group bundle, 
hence it has a Haar system. 
Moreover, $\Pi_k$ has local cross-sections as a direct consequence of Lemma~\ref{local}, 
of the above topological embedding $\Gr_{\rm alg}(\gg)\hookrightarrow\Kc(G)$  
and of the fact that the tautological bundle over $\Gr(\gg)$ is a locally trivial vector bundle 
hence has local cross-sections. 

Now define $\gamma_k:=(q\vert_{\Xi_k})^{-1}\colon G\setminus A_k\to A_k$, 
which is a continuous map.  
Thus Corollary~\ref{Morita4} applies for the group action $G\times A_k\to A_k$, 
and implies that the locally compact groupoids $G\times A_k\tto A_k$ and $\theta_k^{\pullback}(\Gamma_k)$ are isomorphic, 
where $\theta_k:=\gamma_k\circ q\vert_{A_k}\colon A_k\to\Xi_k$. 
Thus the conditions of Definition~\ref{ppb_def} are satisfied, and this completes the proof. 
\end{proof}

\begin{remark}
\normalfont
In the above proof, in order to compute $\theta_k^{\pullback}(\Pi_k)$ more explicitly, note that for any $\xi,\eta\in A_k$ we have 
$$\begin{aligned}
\theta_k(\zeta)=\theta_k(\eta)
& \iff q(\zeta)=q(\eta) \\
& \iff (\exists\Oc\in G\setminus V)\ \zeta,\eta\in\Oc\subseteq A_k \\
&\iff(\exists \xi\in\Xi_k)\ \zeta,\eta\in\tau(G)\xi, 
\end{aligned}$$
and, if this is the case, then 
$(\zeta,h,\eta)\in \theta_k^{\pullback}(\Pi_k)$ if and only if $\Pi_k(h)=\xi$, 
that is, if and only if $h\in G(\xi)$. 
Therefore 
$$\theta_k^{\pullback}(\Pi_k)=\bigsqcup_{\xi\in\Xi_k}\Oc_\xi\times G(\xi)\times\Oc_\xi$$
where $\Oc_\xi:=\tau(G)\xi\subseteq V$ is the $G$-orbit of $\xi\in\Xi_k$. 
\end{remark}

\begin{corollary}
\label{th_exp}
The coadjoint dynamical system of any exponential solvable Lie group is a piecewise pullback of group bundles.   
There exists a decomposition into pieces such that 
the $C^*$-algebra of the reduction of the coadjoint dynamical system  
to its first piece is commutative,  
while the $C^*$-algebras of the reductions of the dynamical system to these pieces 
are Morita equivalent to continuous fields of sections of continuous $C^*$-bundles whose fibers are $C^*$-algebras of coadjoint isotropy groups. 
\end{corollary}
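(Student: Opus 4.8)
The plan is to deduce Corollary~\ref{th_exp} from Theorem~\ref{stratif} by verifying its hypotheses for the coadjoint action, and then to read off the extra structural information from Theorem~\ref{ppb_th} together with Corollary~\ref{Morita4}. First I would take $G$ an exponential solvable Lie group, $\Vc:=\gg$, and $\tau:=\Ad_G$. Since $G$ is exponential, the $G$-module $\gg$ is of exponential type (this is recalled in the excerpt, cf.\ \cite[Subsect.~5.2.2]{FL15}); dualizing, $\Vc^*=\gg^*$ with $\tau^*=\Ad_G^*$ is again a module of exponential type by Proposition~\ref{expmod}\eqref{expmod_item1}. So Theorem~\ref{stratif} is applicable to the coadjoint representation once a suitable $G$-invariant partition $\gg^*=\bigsqcup_{k=1}^n A_k$ is produced satisfying conditions (1)--(4) there.

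The main work is the construction of that partition, and this is where I would lean on the classical fine stratification of $\gg^*$ under the coadjoint action of an exponential solvable Lie group. Using the Pukanszky layering (or the index/jump-set stratification associated with a fixed Jordan--H\"older sequence of ideals $\{0\}=\gg_0\subset\gg_1\subset\cdots\subset\gg_m=\gg$), one gets finitely many $G$-invariant layers $A_1,\dots,A_n$, ordered so that each $D_k=\bigsqcup_{j\le k}A_j$ is Zariski-open hence open in $\gg^*$ (condition (1)); on each layer the orbits all have the same (even) dimension, giving condition (4); and on each layer there is a cross-section $\Xi_k\subseteq A_k$ meeting every orbit in exactly one point, with $q|_{\Xi_k}$ a homeomorphism onto $G\setminus A_k$ (condition (2)) and a polynomial/continuous map $\sigma_{0,k}\colon A_k\to G$ steering each point back to the cross-section (condition (3)). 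The delicate point is that the first piece $A_1$ --- the layer of minimal-dimensional (generic, in fact) orbits, or rather the open dense piece containing the generic orbits --- should be chosen so that the reduced groupoid $\Gc_{V_1}$ is a \emph{pair} groupoid, i.e.\ the generic isotropy is trivial; by Remark~\ref{orbits} this is exactly the condition that $\Tc_1$ be a singleton, and then $C^*(\Gc_{V_1})\simeq C^*(\Tc_1)$ is commutative (a field of copies of $\CC$ over $S_1$), which is the asserted commutativity of the first piece. For a generic exponential solvable Lie group the generic coadjoint orbits need not be the whole of an open set, so one may instead have to order the stratification so that on $A_1$ the isotropy is trivial; in any case such an ordering exists because the set of $\xi$ with $\dim\gg(\xi)$ minimal is $G$-invariant and Zariski-open, and on exponential solvable groups these isotropy groups are connected (cf.\ \cite[Th.~5.3.2]{FL15}), so triviality of $\gg(\xi)$ forces triviality of $G(\xi)$.

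With the partition in hand, Theorem~\ref{stratif} immediately gives that $(G,\gg^*)$, hence the coadjoint dynamical system, is a piecewise pullback of group bundles, with pieces $V_k$, open surjective maps with local cross-sections $\theta_k\colon V_k\to S_k$, and group bundles $\Gamma_k\to S_k$ whose fibers are the coadjoint isotropy groups $G(\xi)$, $\xi\in\Xi_k$. For the last sentence I would invoke Corollary~\ref{Morita4}\eqref{Morita4_item3}--\eqref{Morita4_item4}: the group $G$ is amenable (being solvable), so each $\Pi_k\colon\Gamma_k\to\Xi_k$ is a \emph{strong} group bundle, i.e.\ $C^*(\Gamma_k)$ is $\Cc_0(\Xi_k)$-linearly $*$-isomorphic to the algebra of continuous sections of a \emph{continuous} $C^*$-bundle with fibers $C^*(G(\xi))$; and by Corollary~\ref{Morita4}\eqref{Morita4_item3} the $C^*$-algebra $G\ltimes\Cc_0(V_k)$ of the reduction of the dynamical system to the piece $V_k$ is Morita equivalent to $C^*(\Gamma_k)$. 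Combining these two facts gives exactly the stated conclusion, and for $k=1$ the continuous field has one-dimensional fibers, recovering the commutativity claim.

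I expect the main obstacle to be verifying conditions (2) and (3) of Theorem~\ref{stratif} for the chosen layers: one must exhibit on each layer a genuinely continuous (the classical references give rational, in suitable coordinates) cross-section $\Xi_k$ and a continuous steering map $\sigma_{0,k}$, and check that $q|_{\Xi_k}$ is a homeomorphism and not merely a continuous bijection --- this is where the locally closed, Hausdorff nature of each $G\setminus A_k$ (which in turn rests on the orbits in $A_k$ being locally closed and of constant dimension) has to be used carefully, together with the standard fact that for exponential solvable groups such cross-sections can be built layer by layer along a Jordan--H\"older sequence. Once those topological conditions are in place, condition (4) is automatic from the construction and the rest is a direct appeal to the already-proved Theorem~\ref{stratif}, Theorem~\ref{ppb_th}, and Corollary~\ref{Morita4}.
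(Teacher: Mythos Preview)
Your overall strategy matches the paper's: verify the hypotheses of Theorem~\ref{stratif} for the coadjoint action by invoking the known fine stratification of $\gg^*$ (the paper cites \cite[Th.~2.8]{Cu92} and \cite[Cor.~4.12]{ACD09} for conditions (1)--(4)), and then apply Corollary~\ref{Morita4} with amenability of $G$ to upgrade to continuous $C^*$-bundles. That part is fine.

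The genuine gap is your argument for commutativity on the first piece. You assert that one can arrange $A_1$ so that the generic isotropy is \emph{trivial}, making $\Gc_{V_1}$ a pair groupoid and $C^*(\Gamma_1)$ a field of copies of~$\CC$. This is false in general: for the Heisenberg group the generic coadjoint isotropy is the one-dimensional center, and more generally any nilpotent Lie group with nontrivial center has nontrivial generic isotropy. There is no reordering of the stratification that makes the isotropy trivial on an open piece, because the minimal value of $\dim\gg(\xi)$ need not be zero. Your inference ``triviality of $\gg(\xi)$ forces triviality of $G(\xi)$'' is correct as an implication, but the hypothesis is unavailable.

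The paper's argument is different and is what you are missing: for $\xi$ in the generic (open dense) stratum $A_1$, the Duflo--Vergne theorem \cite{DV69} gives that the coadjoint isotropy Lie algebra $\gg(\xi)$ is \emph{abelian}. Since $G$ is exponential, $G(\xi)$ is connected, hence $G(\xi)\simeq(\RR^b,+)$ for some $b\ge 0$. Thus each fiber $C^*(G(\xi))$ is commutative, and therefore $C^*(\Gamma_1)$, being the section algebra of a $C^*$-bundle with commutative fibers, is commutative. The Morita-equivalent algebra $C^*(\Gc_{V_1})$ need not itself be commutative when $b>0$ (it is $\Kc$-stable), so note also that the corollary only claims commutativity of the \emph{bundle} $C^*$-algebra, not that the reduced groupoid is a pair groupoid.
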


\begin{proof}
Let $G$ be an exponential solvable Lie group, with its coadjoint dynamical system $(G,\gg^*,\Ad_G^*)$. 
Then $\gg^*$ is a $G$-module of exponential type. 
Moreover, 
it is known from \cite[Th. 2.8]{Cu92} and \cite[Cor.~4.12]{ACD09} that there exists a partition $\gg^*=\bigsqcup_{k=1}^n A_k$ into $G$-invariant, 
subsets 
satisfying the conditions of Theorem~\ref{stratif}. 

Resuming the notation of Theorem~\ref{stratif}, we now show that 
the fibers of the group bundle $\Pi_1\colon \Gamma_1\to\Xi_1$ are 
connected abelian Lie groups, hence its $C^*$-algebra is commutative. 
It follows by \cite{DV69} that for every $\xi\in \Xi_1$ ($\subseteq A_1$) the coadjoint isotropy Lie algebra $\gg(\xi)$ is abelian. 
But the hypothesis that $G$ is an exponential solvable Lie group implies  that the coadjoint isotropy group $G(\xi)$ 
is connected. 
Since its Lie algebra $\gg(\xi)$ is abelian, it follows that $G(\xi)$ is an abelian Lie group $(\RR^b,+)$.  
An application of Corollary~\ref{Morita4} then completes the proof.
\end{proof}

\begin{remark}\label{final2}
\normalfont
The ``first piece'' of the coadjoint dynamical system referred to in 
Corollary~\ref{th_exp} 
 holds a central role in representation theory of the exponential Lie group under consideration, for instance because its corresponding set of coadjoint orbits supports the Plancherel measure.  
In order to provide a more clear picture of that piece
we briefly discuss here how it can be computed for two types of exponential solvable Lie groups that are very different from each other: 

1. \emph{Nilpotent Lie groups}.  
If $G$ is a nilpotent Lie group with its Lie algebra $\gg$, then the first piece of their corresponding coadjoint dynamical system can be described as follows. 
Let $\{X_1,\dots,X_m\}$ be a Jordan-H\"older basis in $\gg$, 
hence $\gg_k:=\spann\{X_j\mid j=1,\dots,k\}$ is an ideal of $\gg$ for $k=1,\dots,m$. 
Define a total ordering of the subsets of $\{1,\dots,m\}$ by 
$e\preceq e'$ if and only if $\min(e\setminus e')\le\min(e'\setminus e)$, 
with the convention $\min\emptyset:=\infty$, and denote by $e_1$ the smallest set with respect to that ordering. 
If for every $\xi\in\gg^*$ we define 
$J(\xi):=\{j\in\{1,\dots,m\}\mid \gg(\xi)+\gg_{j-1}\subsetneqq\gg(\xi)+\gg_j\}$, 
where $\gg_0:=\{0\}$, then the ``first piece'' from the proof of Theorem~\ref{th_exp} is 
$A_1:=\{\xi\in\gg^*\mid J(\xi)=e_1\}$.

2. \emph{Frobenius Lie groups}.  
A connected Lie group $G$ with its Lie algebra $\gg$ is said to be a Frobenius group if 
there exists $\xi\in\gg^*$ such that the skew-symmetric bilinear form 
$B_\xi\colon\gg\times\gg\to\RR$, $B_\xi(X,Y):=\xi([X,Y])$, is nondegenerate. 
This implies that the center of $\gg$ is equal to $\{0\}$, 
hence the intersection of the classes of Frobenius groups and nilpotent Lie groups is trivial. 
The properties of Frobenius groups and algebras are very interesting and have been studied for a long time; 
see for instance the remarkable paper \cite{Oo74} and references therein. 
Their significance for representation theory comes from the fact that the Frobenius property of $G$ is equivalent to 
the fact that there is at least one open coadjoint orbit.  
If this is the case, there exist finitely many open coadjoint orbits (see \cite[Prop. 4.5]{BB16a} and Section~\ref{Sect6} below). 
If $G$ is moreover an exponential solvable Lie group, then 
the ``first piece'' $A_1$ from the proof of 
Corollary~\ref{th_exp} 
is the union of its open coadjoint orbits. 

In this case, the cross-section $\Xi_1$ is a finite set because it is a complete system of distinct representatives 
of the coadjoint orbits contained in $A_1$. 
The fibers of the group bundle $\Pi_1\colon \Gamma_1\to\Xi_1$ from the proof of Corollary~\ref{th_exp} 
are 0-dimensional, hence it follows 
that the $C^*$-algebra of the groupoid $\Pi_1$ is an $N$-dimensional abelian $C^*$-algebra, 
where $N$ is the number of open coadjoint orbits of $G$. 
This illustrates the significance of the coadjoint dynamical system again,  
because $C^*(G)$ contains an ideal $*$-isomorphic to a direct sum of $N$ copies of the $C^*$-algebra of compact operators on a separable infinite-dimensional complex Hilbert space, as noted in \cite[Rem. 4.7]{BB16a}. 
\end{remark}

We now give another corollary of Theorem~\ref{stratif}, 
which is applicable for unipotent representations. 

\begin{corollary}\label{final3}
Let  $\tau\colon G\to\End(\Vc)$ be a unipotent representation 
of a nilpotent Lie group~$G$. 
Consider the corresponding 
quotient map $q\colon \Vc\to G\setminus\Vc$, $v\mapsto\tau(G)v$, and 
for every $v\in \Vc$ denote $G(v):=\{g\in G\mid\tau(g)v=v\}$. 

Then there exists a partition into $G$-invariant subsets $\Vc=\bigsqcup_{k=1}^n A_k$  
satisfying the following conditions for $k=1,\dots,n$: 
\begin{enumerate}
\item\label{final3_item1} 
The set $D_k:=\bigsqcup_{j=1}^k A_j$ is open in $\Vc$. 
\item\label{final3_item2} 
The map $q\vert_{\Xi_k}\colon\Xi_k\to G\setminus A_k$ is a homeomorphism 
on a suitable locally closed subset $\Xi_k\subseteq A_k$. 
\item\label{final3_item3} 
The dimension of $G$-orbits contained in $A_k$ is constant. 
\item\label{final3_item4} 
If $\Gamma_k:=\bigsqcup\limits_{v\in\Xi_k} \{v\}\times G(v)\subseteq\Xi_k\times G$  
then the map $\Pi_k\colon \Gamma_k\to\Xi_k$, $\Pi_k(v,g)=v$, 
is a group bundle that has local cross-sections for $k=1,\dots,n$, 
and the transformation group $(G,\Vc)$ is a piecewise pullback of group bundles. 
\end{enumerate}
\end{corollary}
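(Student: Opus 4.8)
The plan is to deduce this corollary from Theorem~\ref{stratif} applied to the representation $\tau$, so the whole point is to verify that the hypotheses of that theorem hold in the present situation. Two of them are immediate. Since $G$ is nilpotent, its exponential map $\exp_G\colon\gg\to G$ is a global diffeomorphism, hence $G$ is an exponential solvable Lie group. Moreover, $\tau$ being unipotent means that $\de\tau(X)$ is a nilpotent operator on $\Vc_{\CC}$ for every $X\in\gg$, so the only weight of the $G$-module $\Vc$ is $\lambda=0$ (it occurs by Engel's theorem, and no nonzero weight can occur since an eigenvalue of a nilpotent operator is~$0$); as $0=(1+\ie c)\cdot 0$ for any $c\in\RR\setminus\{0\}$, the module $\Vc$ is of exponential type in the sense of \cite[Def.~5.3.1]{FL15}. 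It therefore remains to produce a partition $\Vc=\bigsqcup_{k=1}^n A_k$ into $G$-invariant subsets, together with locally closed cross-sections $\Xi_k\subseteq A_k$, satisfying conditions (1)--(4) of Theorem~\ref{stratif}; note that condition~(3) there asks in addition for a continuous ``straightening'' map $\sigma_{0,k}$ from $A_k$ to $G$ with $\sigma_{0,k}(v)\cdot v=\theta_k(v)$.

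Constructing this partition is the substantive part, and here the key is that the action is polynomial: unipotence gives $\tau(\exp_G X)=\sum_{j\ge 0}\de\tau(X)^j/j!$, a polynomial map of $X\in\gg$, so $G$ acts on $\Vc$ through a unipotent algebraic group. Consequently the orbits are closed subvarieties of $\Vc$ (Kostant--Rosenlicht), and every stabilizer $G(v)=\{g\in G\mid\tau(g)v=v\}$ is connected, being the exponential image of the Lie subalgebra $\gg(v):=\{X\in\gg\mid\de\tau(X)v=0\}$ (the nilpotence of $\de\tau(X)$ forces $(\exp\de\tau(X)-I)v=0\Rightarrow\de\tau(X)v=0$). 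The desired partition is then the Pukanszky-type layering of a unipotent module: fixing compatible Jordan--H\"older flags in $\gg$ and in $\Vc$, attach to each orbit its jump-index datum relative to these flags; the layers $A_k$ are the loci on which this datum is constant, further refined so that the orbit dimension is constant on each $A_k$; each $D_k:=\bigsqcup_{j\le k}A_j$ is Zariski-open because membership is governed by the nonvanishing of finitely many polynomial functions; and on each layer one builds an affine cross-section $\Xi_k$ transverse to the orbits, together with a polynomial --- hence continuous --- map from $A_k$ to $G$ carrying each point to its representative in $\Xi_k$, which serves as $\sigma_{0,k}$. This is precisely the analogue, for a general unipotent $G$-module, of the explicit cross-section results of \cite{Cu92} for the coadjoint module, proved by the same induction along the flag, and I expect this step --- establishing the topological properties of the layering (openness of the $D_k$, the homeomorphism property of $q\vert_{\Xi_k}$, constancy of the orbit dimension, and continuity of $\sigma_{0,k}$) --- to be the main obstacle, everything else being formal.

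Once the partition is available, Theorem~\ref{stratif} applies directly and yields at once that the transformation group $(G,\Vc)$ is a piecewise pullback of group bundles and that each $\Pi_k\colon\Gamma_k\to\Xi_k$ has local cross-sections; the fact that $\Pi_k$ is a group bundle follows, as in the proof of that theorem, from Lemmas~\ref{bund1} and~\ref{bund2} together with continuity of the map $A_k\to\Kc(G)$, $v\mapsto G(v)$, which in turn comes from continuity of $v\mapsto\gg(v)$ on $A_k$ (constant orbit dimension and Lemma~\ref{triv0}) and from the embedding of $\Gr(\gg)$ into $\Kc(G)$ via $\exp_G$, using that $G(v)$ is connected. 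Assembling these facts gives assertions (1)--(4) of the corollary and completes the proof.
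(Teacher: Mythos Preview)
Your proposal is correct and follows essentially the same route as the paper: verify that $G$ is exponential and $\Vc$ is of exponential type, produce the layering with cross-sections and the straightening map $\sigma_{0,k}$, then invoke Theorem~\ref{stratif}. The only difference is that where you sketch the Pukanszky-type flag construction and flag it as ``the main obstacle'', the paper simply cites \cite[Th.~3.1.14]{CG90}, which supplies exactly the partition $\Vc=\bigsqcup_{k=1}^n A_k$ with properties \eqref{final3_item1}--\eqref{final3_item3} (and the polynomial cross-section map) for an arbitrary unipotent module of a nilpotent Lie group; that reference is the precise result you are describing, so you can replace your sketch by a direct citation.
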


\begin{proof}
Existence of the partition $\Vc=\bigsqcup_{k=1}^n A_k$ with the properties \eqref{final3_item1}--\eqref{final3_item3} 
follows by \cite[Th. 3.1.14]{CG90}. 
Then Assertion~\eqref{final3_item4} follows by Theorem~\ref{stratif}. 
\end{proof}

\section{On connected Lie groups with primitive $C^*$-algebras}\label{Sect6}

In this section we discuss two classes of examples of solvable Lie groups that can be studied using the methods developed in the preceding sections. 

The first of these classes consists of Lie groups whose $C^*$-algebras are primitive (that is, they admit faithful irreducible representations) and we briefly discuss the open problem of classifying the Lie groups satisfying that condition 
(see Examples \ref{simplest}, \ref{Pr4} and \ref{Pr5}). 
These examples are obtained from coadjoint dynamical systems of some non-exponential solvable Lie groups 
with a unique open orbit, which is also dense. 
Several interesting results can be found in \cite{Oo74} 
on  the version of the above primitivity problem for universal enveloping algebras of Lie algebras instead of $C^*$-algebras of Lie groups. 

The second class of examples discussed below consists of exponential solvable Lie groups (see Examples \ref{ax+b} and \ref{real}) with several open coadjoint orbits. 

\subsection*{Examples of connected Lie groups whose $C^*$-algebras are primitive}
There has been recent interest in describing discrete groups whose $C^*$-algebras are primitive, 
that is, $C^*$-algebras that have faithful irreducible $*$-representations 
(see \cite{Mu03}, \cite{Om14} and the references therein).  
The similar problem on connected Lie groups seems to be still open,
although it follows by the recent result \cite[Th. A]{Rau16}
that the $C^*$-algebra of every connected Lie group (having its
dimension~$\ge 1$)
admits at least one irreducible $*$-representation which is not faithful.
In connection with that problem, 
and as an application of our groupoid approach to transformation groups, 
we suggest a method to construct examples of connected solvable Lie groups of type~I 
whose $C^*$-algebras are primitive (see Example~\ref{Pr4}), 
and this construction is based on Theorem~\ref{Pr2} below. 

On the negative side, it follows by Proposition~\ref{fred0} that if a Lie group is CCR, 
then its $C^*$-algebra cannot be primitive. 
This is the case for nilpotent or semisimple Lie groups, for instance. 
Using the method of coadjoint orbits for exponential solvable Lie groups (see \cite{LeLu94}), 
it is also easily seen that if $G$ is such a Lie group, 
then its $C^*$-algebra $C^*(G)$ is primitive if and only if $G$ has a unique open coadjoint orbit. 
But this is never the case by Proposition~\ref{even} below. 
In the special case of completely solvable Lie groups, it also follows 
by \cite[Sect. 1]{CuPe89} and \cite[Prop. 4.5]{BB16a}, 
that  there exist either at least two open coadjoint orbits or none. 

\begin{proposition}\label{even}
The number of open coadjoint orbits of any exponential solvable Lie group is finite and even. 
\end{proposition}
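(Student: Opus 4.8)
The plan is to produce a fixed-point-free involution on the set of open coadjoint orbits, which will be shown to be finite; a finite set equipped with a fixed-point-free involution has even cardinality, and this gives the statement. Throughout I would assume $\dim\gg\ge1$, the case $\gg=\{0\}$ being trivial. Recall that a coadjoint orbit $\Oc_\xi\subseteq\gg^*$ is open if and only if it has dimension $\dim G$, equivalently the skew-symmetric bilinear form $B_\xi\colon\gg\times\gg\to\RR$, $B_\xi(X,Y):=\xi([X,Y])$, is nondegenerate (its radical being the coadjoint isotropy subalgebra $\gg(\xi)$); in particular every open coadjoint orbit $\Oc_\xi$ has $\xi\ne0$.

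For finiteness one may simply invoke \cite[Prop. 4.5]{BB16a}; alternatively it follows from a short semialgebraic argument. Fixing a basis of $\gg$, the function $\xi\mapsto\det\bigl(\xi([X_i,X_j])\bigr)_{i,j}$ is a polynomial on $\gg^*$ (identically $0$ when $\dim\gg$ is odd, the square of a Pfaffian when $\dim\gg$ is even), so $U:=\{\xi\in\gg^*\mid B_\xi\text{ is nondegenerate}\}$ is a Zariski-open, hence semialgebraic, subset of $\gg^*$ and therefore has finitely many connected components. Each open coadjoint orbit $\Oc$ is contained in $U$ and is open in $U$; it is also closed in $U$, since $\overline{\Oc}\setminus\Oc$ contains no open coadjoint orbit (an open orbit meeting $\overline{\Oc}$ already meets $\Oc$, hence coincides with $\Oc$) and so lies in $\gg^*\setminus U$. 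Thus each open coadjoint orbit is a union of connected components of $U$; as distinct orbits are disjoint, there are only finitely many of them.

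The key step is to show that $\Oc\mapsto-\Oc$ is a well-defined fixed-point-free involution on the set of open coadjoint orbits. Since each $\Ad_G^*(g)$ is linear, $\Ad_G^*(g)(-\xi)=-\Ad_G^*(g)(\xi)$, so $\xi\mapsto-\xi$ maps the coadjoint orbit of $\xi$ onto that of $-\xi$; being a linear homeomorphism of $\gg^*$ it preserves openness, and it is clearly an involution. Suppose, for contradiction, that $\Oc_\xi$ is open and $-\Oc_\xi=\Oc_\xi$. Then $-\xi\in\Oc_\xi$, so there is $g\in G$ with $\Ad_G^*(g)\xi=-\xi$, whence $\Ad_G^*(g^2)\xi=\xi$, i.e.\ $g^2\in G(\xi)$. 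Openness of $\Oc_\xi$ forces $\dim G(\xi)=0$, while $G(\xi)$ is connected by \cite[Th. 5.3.2]{FL15} because $G$ is exponential solvable; hence $G(\xi)=\{\1\}$ and $g^2=\1$. Writing $g=\exp_G X$ with $X\in\gg$, we get $\exp_G(2X)=g^2=\1=\exp_G(0)$, and injectivity of $\exp_G$ yields $X=0$, so $g=\1$ and therefore $\xi=\Ad_G^*(g)\xi=-\xi$, contradicting $\xi\ne0$.

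Combining the two parts, the set of open coadjoint orbits of $G$ is finite and carries the fixed-point-free involution $\Oc\mapsto-\Oc$, so its cardinality is even. The main obstacle is the fixed-point-freeness of the involution: this is the only place where exponential solvability is genuinely used, via connectedness of the coadjoint isotropy groups together with the fact that $\exp_G$ is a bijection compatible with one-parameter subgroups — precisely what forbids the order-two element $g$ that an involution-fixed open orbit would produce.
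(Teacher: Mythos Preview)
Your proof is correct, and the overall strategy---finiteness plus a fixed-point-free involution $\Oc\mapsto-\Oc$---is the same as the paper's. The interesting difference is in how you establish fixed-point-freeness. The paper proves the stronger statement that $\Oc_\xi\ne\Oc_{-\xi}$ for \emph{every} nonzero $\xi$ (not just those with open orbit): it argues that if $\Ad_G^*(x)\xi=-\xi$ then $-1$ is an eigenvalue of $\Ad_G(x)$, and then uses the root description of exponential solvable Lie algebras to show that $\ee^{\lambda_j(X)}\ne-1$ for all roots $\lambda_j$ and all $X\in\gg$, so no $\Ad_G(x)$ can have eigenvalue $-1$. Your argument instead exploits the specific hypothesis that $\Oc_\xi$ is open: triviality of $G(\xi)$ (zero-dimensional and connected) forces $g^2=\1$, and bijectivity of $\exp_G$ rules out nontrivial torsion. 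Your route is shorter and avoids the triangular/root analysis; the paper's route proves a fact of independent interest about all nonzero coadjoint orbits. Your alternative finiteness argument via semialgebraicity of $U$ is also a nice self-contained replacement for the citation to \cite[Prop.~4.5]{BB16a}.
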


\begin{proof}
Let $G$ be an exponential solvable Lie group. 
We first prove that 
\begin{equation}\label{even_proof_eq1}
(\forall \xi\in\gg^*\setminus\{0\})\quad \Oc_{\xi}\ne\Oc_{-\xi}.
\end{equation} 
To this end we reason by contradiction. 
Assuming that for some $\xi\in\gg^*\setminus\{0\}$ one has $\Oc_{\xi}=\Oc_{-\xi}$, that is, $-\xi\in\Oc_{\xi}$, 
it follows that there exists 
$x\in G$ with $\Ad_G^*(x)\xi=-\xi$. 
Then $-1$ is an eigenvalue of the linear operator $\Ad_G^*(x)=(\Ad_G(x)^{-1})^*\colon\gg^*\to\gg^*$, 
hence $-1$ is also an eigenvalue of $\Ad_G(x)\colon\gg\to\gg$. 

On the other hand, let us denote $m:=\dim\gg$ and 
select a sequence of ideals of the complexified Lie algebra $\gg_{\CC}$, 
$$\{0\}=\gg_0\subset\gg_1\subset\cdots\subset\gg_m=\gg_{\CC}$$ 
with $\dim_{\CC}\gg_j=j$ for $j=0,\dots,m$. 
Also select a basis $\{X_1,\dots,X_m\}$ of $\gg_{\CC}$ with $X_j\in\gg_j\setminus\gg_{j-1}$ for $j=1,\dots,m$. 
Then there exist $\CC$-linear functionals $\lambda_1,\dots,\lambda_m\colon\gg_{\CC}\to\CC$ 
for which 
\begin{equation}\label{even_proof_eq2}
(\forall X\in\gg_{\CC})\quad  
\ad_{\gg_{\CC}} X=
\begin{pmatrix}
\lambda_1(X) & \ast & \ast \\
            &\ddots & \ast \\
 \text{\Large 0}           &       & \lambda_m(X)
\end{pmatrix}
\end{equation}
where the above triangular matrix is written with respect to the basis $\{X_1,\dots,X_m\}$ of $\gg_{\CC}$. 
The functionals $\lambda_1,\dots,\lambda_m$ are the roots of the Lie algebra $\gg$, 
hence the hypothesis that the Lie group $G$ is exponential is equivalent to the fact that for 
every $j=1,\dots,m$ there exist the number $\theta_j\in\RR$ and the functional $\xi_j\in\gg^*$ 
with $[\gg,\gg]\subseteq\Ker\xi_j$ and $\lambda_j(X)=(1+\ie\theta_j)\xi_j(X)$ for every $X\in\gg$ 
(see for instance \cite[Subsect. 5.2.2]{FL15}). 
But then it is easily checked that $\ee^{\lambda_j(X)}\ne-1$ for every $j=1,\dots,m$ and $X\in\gg$. 
It then follows by \eqref{even_proof_eq2} that for every $X\in\gg$ 
the operator $\Ad_G(\exp_G X)=\exp(\ad_{\gg} X)\colon\gg\to\gg$ has all its eigenvalues different from~$-1$.
Since the Lie group $G$ is exponential, hence in particular its exponential map $\exp_G\colon\gg\to G$ is surjective, 
we thus obtain that for every $x\in G$ all the eigenvalues of the operator $\Ad_G(x)\colon\gg\to\gg$ 
are different from~$-1$, which is a contradiction with the conclusion of the above paragraph. 

Thus \eqref{even_proof_eq1} is completely proved. 
We now note that the set of open coadjoint orbits of $G$ is finite 
and for every $\xi\in\gg^*$ its coadjoint orbit $\Oc_{\xi}$ is open if and only if $\det(\langle \xi, [Y_j, Y_k] \rangle)_{j, k=1, \dots, m}\ne 0$,   for a  basis $Y_1, \dots, Y_m$ of $\gg$ (see \cite[Prop. 4.5]{BB16a}). 
Hence, denoting by $S$ the set of all open coadjoint orbits of $G$, 
one has the involutive map without fixed points 
$S\to S$, $\Oc_\xi\mapsto\Oc_{-\xi}$.  
This implies that the number of elements of $S$ is even, 
which completes the proof. 
\end{proof}

We now prepare for the construction of some connected solvable Lie groups of type~I 
whose $C^*$-algebras are primitive (see Examples~\ref{simplest} and \ref{Pr4}). 

We use the convention that  
any complex vector space~$\Vc$ is denoted by $\Vc_{\RR}$ when regarded as a real vector space. 
Moreover, $\Vc^*$ denotes the complex vector space of all complex linear functionals $\Vc\to\CC$, 
and $\Vc_{\RR}^*$ stands the real vector space of all real linear functionals $\Vc_{\RR}\to\RR$. 
Similarly, $\End(\Vc)$ is the set of all complex linear maps from $\Vc$ into itself, 
and $\End(\Vc_{\RR})$ is the set of all real linear maps from $\Vc_{\RR}$ into itself, 
hence $\End(\Vc)\subseteq\End(\Vc_{\RR})$. 
Also, for any complex Lie group $G$ (i.e., $G$ is a complex manifold and its group operations are holomorphic maps), 
we denote by $G_{\RR}$ its underlying real Lie group. 

\begin{lemma}\label{Pr1}
Let $\Vc$ be any complex vector space that carries a group representation $\tau\colon G\to\End(\Vc)$, 
which induces the right group actions
$\Vc^*\times G\to\Vc^*$, $(\zeta,g)\mapsto\zeta\circ\tau(g)$, 
and $\Vc_{\RR}^*\times G\to\Vc_{\RR}^*$, $(\xi,g)\mapsto\xi\circ\tau(g)$. 
Then the real-part map 
$$\Re\colon\Vc^*\to\Vc_{\RR}^*,\quad (\Re\zeta)(\cdot):=\frac{1}{2}(\zeta(\cdot)+\overline{\zeta(\cdot)})$$
is a $G$-equivariant $\RR$-linear isomorphism. 
\end{lemma}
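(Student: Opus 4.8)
The plan is to verify directly the three properties asserted of $\Re\colon\Vc^*\to\Vc_{\RR}^*$ — that it is $\RR$-linear, $G$-equivariant, and bijective — the first two being purely formal and only the last requiring a construction. The statement is understood on the algebraic level, so no topology enters.

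\textbf{Linearity and equivariance.} With the identification $(\Re\zeta)(v)=\Re(\zeta(v))$, the $\RR$-linearity of $\Re$ on $\Vc^*$ follows immediately from the $\RR$-linearity of the real-part map $\CC\to\RR$; note that $\Re$ is not $\CC$-linear, which is precisely why its target must be $\Vc_{\RR}^*$ and not $\Vc^*$. For the equivariance I would compute, for $g\in G$, $\zeta\in\Vc^*$ and $v\in\Vc$,
\[
\bigl(\Re(\zeta\circ\tau(g))\bigr)(v)=\Re\bigl(\zeta(\tau(g)v)\bigr)=(\Re\zeta)(\tau(g)v)=\bigl((\Re\zeta)\circ\tau(g)\bigr)(v),
\]
so $\Re(\zeta\circ\tau(g))=(\Re\zeta)\circ\tau(g)$; nothing is used here except that the real-part operation touches only the scalar values of a functional, while $\tau(g)$ acts on its argument.

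\textbf{Bijectivity.} I would exhibit the explicit inverse $\Psi\colon\Vc_{\RR}^*\to\Vc^*$, $(\Psi\xi)(v):=\xi(v)-\ie\,\xi(\ie v)$. The one step that actually requires a moment's thought is that $\Psi\xi$ is genuinely $\CC$-linear: it is visibly $\RR$-linear, and $(\Psi\xi)(\ie v)=\xi(\ie v)-\ie\,\xi(-v)=\ie\bigl(\xi(v)-\ie\,\xi(\ie v)\bigr)=\ie\,(\Psi\xi)(v)$, so $\Psi\xi\in\Vc^*$. Then $\Re\bigl((\Psi\xi)(v)\bigr)=\xi(v)$ because $\xi$ takes real values, which gives $\Re\circ\Psi=\id$; conversely, using $\zeta(\ie v)=\ie\,\zeta(v)$, one gets $(\Psi(\Re\zeta))(v)=\Re(\zeta(v))-\ie\,\Re(\ie\,\zeta(v))=\Re(\zeta(v))+\ie\,\Im(\zeta(v))=\zeta(v)$, so $\Psi\circ\Re=\id$. (When $\Vc$ is finite-dimensional one could instead check injectivity of $\Re$ directly and finish by the dimension count $\dim_{\RR}\Vc_{\RR}^*=2\dim_{\CC}\Vc=\dim_{\RR}\Vc^*$, but the closed formula for $\Psi$ is cleaner and needs no dimension hypothesis.)

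I do not expect a genuine obstacle here: Lemma~\ref{Pr1} is just the $G$-equivariant form of the standard identification of the complex dual of $\Vc$ with the real dual of its realification, and the proof is essentially bookkeeping. The only point worth isolating is the verification that the inverse $\Psi$ indeed lands in $\Vc^*$, i.e. that $\Psi\xi$ is complex linear.
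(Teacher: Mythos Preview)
Your proof is correct and follows essentially the same approach as the paper: both construct the same explicit inverse $\xi\mapsto\bigl(v\mapsto\xi(v)-\ie\,\xi(\ie v)\bigr)$ (the paper writes it as $\xi+\ie\widetilde{\xi}$ with $\widetilde{\xi}(v):=-\xi(\ie v)$) and verify the $\CC$-linearity of the result. The only cosmetic difference is that the paper checks equivariance on the inverse map rather than on $\Re$ itself, reducing it to the identity $\widetilde{\xi\circ\tau(g)}=\widetilde{\xi}\circ\tau(g)$ via the $\CC$-linearity of $\tau(g)$.
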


\begin{proof}
For any $\xi\in \Vc_{\RR}^*$ define $\widetilde{\xi}\in\Vc_{\RR}^*$ by $\widetilde{\xi}(v):=-\xi(\ie v)$ for every $v\in\Vc$. 
With this notation it is easily checked that for $\xi,\eta\in \Vc_{\RR}^*$ we have $\xi+\ie\eta\in \Vc^*$ 
if and only if $\eta=\widetilde{\xi}$. 
This implies that the map $\Re\colon\Vc^*\to\Vc_{\RR}^*$ is bijective with its inverse 
$\Re^{-1}\colon \Vc_{\RR}^*\to\Vc^*$, $\xi\mapsto\xi+\ie\widetilde{\xi}$. 
It is straightforward to check that $\Re$ is $\RR$-linear. 
To see that $\Re$ is also $G$-equivariant, it suffices to check that for arbitrary $g\in G$ and $\xi\in\Vc_{\RR}^*$  one has 
$\Re^{-1}(\xi\circ\tau(g))=(\Re^{-1}\xi)\circ\tau(g))$, which is equivalent to $\widetilde{\xi\circ\tau(g)}=\widetilde{\xi}\circ\tau(g)$. 
This last equality follows by the fact that 
$\tau(g)\in\End(\Vc)$, that is, $\tau(g)\in\End(\Vc_{\RR})$ and $\tau(g)(\ie v)=\ie\tau(g)v$ for all $v\in\Vc$. 
\end{proof}

In the setting of Lemma~\ref{Pr1} we denote by $G\ltimes \Vc_{\RR}^*$ the semidirect product of groups, with its multiplication map 
\begin{equation}\label{Pr_eq1}
(g_1,\xi_1)(g_2,\xi_2):=(g_1g_2,\xi_1\circ\tau(g_2)+\xi_2)
\end{equation}
for all $\xi_1,\xi_2\in\Vc_{\RR}^*$ and $g_1,g_2\in G$. 

\begin{theorem}\label{Pr2}
Let $\Vc$ be a finite-dimensional complex vector space that carries a continuous representation $\tau\colon G\to\End(\Vc)$ 
of a locally compact group~$G$. 
Let $v_0\in\Vc$ and $G(v_0):=\{g\in G\mid\tau(g)v_0=v_0\}$. 
Assume that the homogeneous space $G/G(v_0)$ has a $G$-invariant Radon measure and denote by $\mu$ its pushforward  
through the $G$-equivariant continuous map $G/G(v_0)\to \Vc$, $gG(v_0)\mapsto\tau(g)$. 

Then the following assertions hold: 
\begin{enumerate}[(i)]
\item\label{Pr2_item1} The map 
$$\pi\colon G\ltimes\Vc_{\RR}^* \to\Bc(L^2(\Vc,\mu)),\quad 
(\pi(g,\xi)\varphi)(v):=\ee^{\ie\xi(v)}\varphi(\tau(g^{-1})v)$$
is a continuous unitary irreducible representation. 
\item\label{Pr2_item2} If the locally compact group $G$ is amenable, the $G$-orbit $\Oc:=\tau(G)v_0$ is dense in $\Vc$, 
and $G(v_0)=\{\1\}$, 
then  
$\pi\colon C^*(G\ltimes \Vc_{\RR}^*)\to\Bc(L^2(\Vc,\mu))$ is a faithful irreducible $*$-representation. 
\end{enumerate}
\end{theorem}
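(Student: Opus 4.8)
The plan is to verify \eqref{Pr2_item1} by a direct computation identifying $\pi$ with an induced representation, and then deduce \eqref{Pr2_item2} from the earlier groupoid machinery, in particular from Corollary~\ref{Morita4} and Proposition~\ref{dense2}. First I would check that $\pi$ is a well-defined unitary representation: the formula $(\pi(g,\xi)\varphi)(v)=\ee^{\ie\xi(v)}\varphi(\tau(g^{-1})v)$ preserves $L^2(\Vc,\mu)$ because $\mu$ is (by construction) a $G$-invariant measure supported on the orbit $\Oc=\tau(G)v_0$, so the change of variables $v\mapsto\tau(g^{-1})v$ is measure-preserving; the cocycle identity for $\pi$ then follows from the multiplication rule \eqref{Pr_eq1}, since $\xi_1\circ\tau(g_2)$ applied to $v$ matches $\xi_1(\tau(g_2)v)$, which is exactly what the composition of the two operators produces. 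Strong continuity is routine from continuity of $\tau$ and dominated convergence.

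For irreducibility in \eqref{Pr2_item1}, the cleanest route is to recognize $\pi$ as the representation of $G\ltimes\Vc_{\RR}^*$ induced from the character of the normal abelian subgroup $\Vc_{\RR}^*$ that is the ``evaluation at $v_0$'' (using Lemma~\ref{Pr1} to pass between $\Vc^*$ and $\Vc_{\RR}^*$), with the full group acting; by the Mackey machine / imprimitivity theorem, such a representation is irreducible provided the orbit $\Oc=\tau(G)v_0\cong G/G(v_0)$ carries a unique (up to scalar) quasi-invariant measure class and the little group acts appropriately. Alternatively, and perhaps more self-containedly, I would argue directly: the restriction $\pi|_{\Vc_{\RR}^*}$ decomposes as the multiplication representation on $L^2(\Oc,\mu)$ via the characters $\xi\mapsto\ee^{\ie\xi(v)}$ indexed by $v\in\Oc$, so any operator commuting with all of $\pi(\Vc_{\RR}^*)$ is multiplication by an $L^\infty$ function on $\Oc$; commuting in addition with $\pi(G)$ forces that function to be $\tau(G)$-invariant, hence $\mu$-a.e.\ constant because $G$ acts transitively on $\Oc$. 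This gives that the commutant is trivial, i.e.\ $\pi$ is irreducible.

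For \eqref{Pr2_item2}, the point is that under the hypotheses $G$ amenable, $\Oc$ dense in $\Vc$, and $G(v_0)=\{\1\}$, the representation $\pi$ is precisely (unitarily equivalent to) a regular representation of the transformation groupoid $G\ltimes\Vc^*$ at the point $v_0$. Indeed, with $G(v_0)=\{\1\}$ the groupoid $\Gc:=G\ltimes\Vc^*\tto\Vc^*$ restricted to the orbit $\Oc$ is a pair groupoid, the $d$-fiber $\Gc_{v_0}$ is identified with $\Oc\cong G$, and the regular representation $\Lambda_{v_0}$ acts on $L^2(\Gc_{v_0})\cong L^2(\Oc,\mu)\cong L^2(\Vc,\mu)$ exactly by the formula defining $\pi$. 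Since $C^*(G\ltimes\Vc_{\RR}^*)\simeq C^*(G\ltimes\Vc^*)=C^*(\Gc)$ (via the Fourier-transform isomorphism $\Cc_0(\Vc)\simeq C^*(\Vc^*,+)$ discussed in Section~\ref{Sect5}), and $\Oc=\tau(G)v_0$ is a dense orbit in $\Vc^*$, Proposition~\ref{dense2}\eqref{dense2_item2} gives that $\Lambda_{v_0}$ is faithful on $C^*_{\rm red}(\Gc)$; amenability of $G$ makes $G\ltimes\Vc^*$ amenable (it is an extension of amenable groups, cf.\ \eqref{exact1}), so $C^*(\Gc)=C^*_{\rm red}(\Gc)$ and $\pi=\Lambda_{v_0}$ is a faithful irreducible $*$-representation of $C^*(G\ltimes\Vc_{\RR}^*)$.

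\textbf{The main obstacle.} The delicate step is matching the unitary $\pi$ built from the \emph{ad hoc} $L^2(\Vc,\mu)$ picture with the \emph{intrinsic} regular representation $\Lambda_{v_0}$ of the transformation groupoid and confirming that the measure $\mu$ appearing in the statement is exactly the one coming from the left Haar system of $\Gc$ pushed to the orbit; this requires care about which $\sigma$-finite quasi-invariant measure one uses on $G/G(v_0)$ and how the Fourier transform $\Cc_0(\Vc)\simeq C^*(\Vc^*,+)$ transports the crossed-product picture. Once that identification is pinned down, faithfulness is immediate from Proposition~\ref{dense2}, so essentially all the work is in the bookkeeping of this identification together with the direct irreducibility argument in part \eqref{Pr2_item1}.
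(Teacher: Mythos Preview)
Your approach is essentially the paper's. For \eqref{Pr2_item2} the paper carries out exactly the bookkeeping you flag as the obstacle: it computes $\pi(f_1\otimes f_2)$ explicitly, matches it via the Fourier isomorphism $\Psi\colon G\ltimes\Cc_0(\Vc_{\RR})\to C^*(G\ltimes\Vc_{\RR}^*)$ with the covariant representation $\sigma\ltimes M$ of the crossed product, checks that $\sigma\ltimes M$ is unitarily equivalent to the regular representation $\Lambda_{v_0}$ of the transformation groupoid, and then invokes Proposition~\ref{dense2} together with metric amenability. One correction to your write-up: the transformation groupoid in play is $\Gc=G\times\Vc_{\RR}\tto\Vc_{\RR}$ (the $\tau$-action on $\Vc$), not a groupoid over $\Vc^*$; the orbit $\Oc$ sits in $\Vc$, and $\Vc_{\RR}^*$ enters only as the abelian normal subgroup of the semidirect product, the link to the groupoid being the Fourier isomorphism $\Cc_0(\Vc_{\RR})\simeq C^*(\Vc_{\RR}^*)$ you mention. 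For \eqref{Pr2_item1} the paper simply cites \cite[Prop.~3.1]{BB16b}, which derives irreducibility from $G$-invariance and ergodicity of~$\mu$; your direct commutant argument is precisely the content of that citation, so you have unpacked it rather than taken a different route.
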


\begin{proof}
\eqref{Pr2_item1} Since $G$ acts transitively on $\Oc$ and
$\mu(\Vc\setminus\Oc)=0$,
it is easily checked that
the measure $\mu$ is $G$-invariant and ergodic.
Then the group representation $\pi$ is irreducible by \cite[Prop. 3.1]{BB16b}.

\eqref{Pr2_item2}
The fact that the above unitary representation $\pi$ is irreducible
implies that also its corresponding $*$-representation
$\pi\colon C^*(G\ltimes \Vc_{\RR}^*)\to\Bc(L^2(\Vc,\mu))$ is irreducible.

Let $f_1\in \Cc_c(G)$, $f_2\in\Cc_c(\Vc_{\RR}^*)$, and define $f:=f_1\otimes f_2\in L^1(G\ltimes \Vc_{\RR}^*)$. 
It is well known and straightforward to check using \eqref{Pr_eq1} 
that if we denote by $\de g$ any left Haar measure of $G$ and by $\de\xi$ any Lebesgue measure of $\Vc_{\RR}^*$, then the direct product of these measures is a left Haar measure of the semidirect product $G\ltimes \Vc_{\RR}^*$. 
Then we have for all $\varphi\in \Cc_c(\Vc)$ and $v\in\Vc$, 
$$\begin{aligned}
(\pi(f_1\otimes f_2)\varphi)(v)
:=
&\int\limits_G\int\limits_{\Vc_{\RR}^*} f_1(g)f_2(\xi)\ee^{\ie\xi(v)}\varphi(\tau(g^{-1})v)\de g\de\xi \\
=
& (\Fc^{-1} f_2)(v)\int\limits_G f_1(g)\varphi(\tau(g^{-1})v)\de g
\end{aligned}$$
hence 
\begin{equation}\label{Pr2_proof_eq1}
\pi(f_1\otimes f_2)\varphi
=\Fc^{-1} f_2\cdot\int\limits_G f_1(g)\cdot \varphi\circ\tau(g^{-1})\de g
\end{equation}
where $\Fc\colon \Sc(\Vc_{\RR})\to\Sc(\Vc_{\RR}^*)$ is the Fourier transform. 

On the other hand, one has the $C^*$-dynamical system $(\Cc_0(\Vc_{\RR}),G,\alpha)$, 
where
$$\alpha\colon G\times\Cc_0(\Vc_{\RR})\to\Cc_0(\Vc_{\RR}),\quad \alpha_g(h):=h\circ\tau(g^{-1}).$$ 
Since the measure $\mu$ on $\Vc_{\RR}$ is invariant under the group action of $G\times\Vc_{\RR}\to\Vc_{\RR}$, 
$(g,v)\mapsto\tau(g)v$, we obtain a covariant representation $(M,\sigma)$ of  $(\Cc_0(\Vc_{\RR}),G,\alpha)$ 
on $L^2(\Vc_{\RR},\mu)$, where we use the $*$-representation 
$$M\colon \Cc_0(\Vc_{\RR})\to\Bc(L^2(\Vc_{\RR},\mu)), \quad M(h)\varphi:=h\varphi,$$ 
and the unitary representation 
$$\sigma\colon G\to\Bc(L^2(\Vc_{\RR},\mu)), \quad \sigma(g)\varphi:=\varphi\circ\tau(g^{-1}).$$  
It is clear that $M(\alpha_g(h))=\sigma(g)M(h)\sigma(g)^{-1}$ for all $g\in G$ and $h\in\Cc_0(\Vc_{\RR})$, 
hence the pair $(M,\sigma)$ is indeed a covariant representation of the $C^*$-dynamical system $(\Cc_0(\Vc_{\RR}),G,\alpha)$, 
and then that pair defines a $*$-representation of the crossed product, 
$\sigma\ltimes M\colon G\ltimes \Cc_0(\Vc_{\RR})\to \Bc(L^2(\Vc_{\RR},\mu))$, 
given by 
\begin{equation}\label{Pr2_proof_eq2}
(\sigma\ltimes M)(f_1\otimes f_2)=M(f_2)\int\limits_G f_1(g)\sigma(g)\de g
\end{equation}
for arbitrary $f_1\in L^1(G)$ and $f_2\in \Cc_c(\Vc_{\RR})$. 

Comparing \eqref{Pr2_proof_eq1} and \eqref{Pr2_proof_eq2}, we obtain 
\begin{equation}\label{Pr2_proof_eq3}
\pi\circ\Psi=\sigma\ltimes M
\end{equation}
for the $*$-morphism 
$$\Psi\colon G\ltimes \Cc_0(\Vc_{\RR})\to C^*(G\ltimes \Vc_{\RR}^*),\quad 
\Psi(f_1\otimes f_2)= f_1\otimes \Fc f_2$$
for all $f_1\in L^1(G)$ and $f_2\in\Cc_c(\Vc_{\RR})$. 
But it is well known that, considering the $C^*$-algebra $C^*(\Vc_{\RR}^*)$ of the abelian group $(\Vc_{\RR}^*,+)$,  
the Fourier transform $\Fc$ extends to 
a $*$-isomorphism $\Cc_0(\Vc_{\RR})\to C^*(\Vc_{\RR}^*)$ 
and moreover one has the canonical isomorphism $C^*(G\ltimes \Vc_{\RR}^*)\simeq G\ltimes C^*(\Vc_{\RR}^*)$ 
(see for instance \cite[Prop. 3.11]{Wi07}). 
Therefore the above map $\Psi$ is a $*$-isomorphism.  

Since the locally compact group $G$ is amenable, it follows by \cite[Prop. 1.13]{Pa88} that 
also the group $G\ltimes \Vc_{\RR}^*$ is amenable, hence one has the canonical $*$-isomorphism 
$C^*(G\ltimes \Vc_{\RR}^*)\simeq C^*_{\rm red}(G\ltimes \Vc_{\RR}^*)$ (see \cite[Th. 4.21]{Pa88}). 
The hypothesis that $G$ is amenable also implies that 
if $\Gc:=G\times\Vc\tto\Vc$ is the groupoid associated with the transformation group $(G,\Vc)$, 
with its domain/range maps $d,r\colon G\times\Vc\to\Vc$, $d(g,v):=v$, $r(g,v):=\tau(g)v$, 
then $\Gc$ is metrically amenable, that is,  we have $C^*(\Gc)\simeq C^*_{\rm red}(\Gc)$ as in the definition before Remark~\ref{rem2.1}.
Hence, also using the hypothesis that the orbit $\Oc$ is dense, it follows by Proposition~\ref{dense2} 
that the regular representation $\pi_{v_0}\colon C^*(\Gc)\to\Bc(L^2(\Gc_{v_0}))$ is faithful. 
Here 
$$\Gc_{v_0}=\{(g,v)\in G\times\Vc\mid d(g,v)=v_0\}=\{(g,v_0)\in G\times\Vc\mid g\in G\}$$
Using the hypothesis $G(v_0)=\{\1\}$,  
it is straightforward to check that the map 
$$r\vert_{\Gc_{v_0}}\colon \Gc_{v_0}\to\Oc\hookrightarrow\Vc_{\RR}, \quad (g,v_0)\mapsto\tau(g)v_0,$$ 
defines a unitary operator 
$L^2(\Gc_{v_0})\to L^2(\Vc_{\RR},\mu)$ that intertwines the representations 
$\pi_{v_0}$ and $\sigma\ltimes M$ (compare for instance \cite[Th. 1.5]{Gl62}), 
so the $*$-representation $\sigma\ltimes M$ is also faithful.  

Now by \eqref{Pr2_proof_eq3} 
along with the fact that $\Psi$ is a $*$-isomorphism we obtain that the representation $\pi$ is faithful. 
This completes the proof. 
\end{proof}

We now discuss the simplest setting in which Theorem~\ref{Pr2} applies. 

\begin{example}\label{simplest}
\normalfont
For the multiplicative group $G:=(\CC\setminus\{0\},\cdot )$ and the complex 1-dimensional vector space $\Vc:=\CC$  
we define the representation 
$$\tau\colon G\to\End(\Vc),\ \tau(a)v:=av.$$
Then the hypothesis of Theorem~\ref{Pr2} is satisfied for $v_0=1\in\CC$, since $G(v_0)=\{1\}$ 
and the orbit $\tau(G)v_0=\CC\setminus\{0\}$ is a dense open subset of $\CC$. 
It follows that the $C^*$-algebra of the complex $ax+b$-group $G\ltimes\Vc_{\RR}^*=(\CC\setminus\{0\})\ltimes\CC$ 
has a faithful irreducible representation, which is actually well known (see \cite[\S 2]{Ro76} and \cite{ET79}). 
This group is a connected solvable Lie group, which is not simply connected. 
\end{example}

For  Example~\ref{Pr4})
we need 
a few simple facts that we establish in Lemmas \ref{alg1}--\ref{alg3}. 

Let $\Vc$ be a finite-dimensional real vector space with its group of invertible endomorphisms denoted by~$\End(\Vc)^\times$. 
A subgroup $G\subseteq \End(\Vc)^\times$ is called an \emph{algebraic group} if there exists a family $\Pc$ 
of real-valued polynomial functions on the real vector space $\End(\Vc)\times \End(\Vc)$ for which 
$G=\{g\in\End(\Vc)^\times\mid(\forall p\in\Pc)\ p(g,g^{-1})=0\}$. 
Such an algebraic group $G$ is clearly a closed subgroup of $\End(\Vc)^\times$, 
so $G$ is a Lie group with its topology induced from~$\End(\Vc)^\times$. 

A Lie subalgebra $\gg\subseteq\End(\Vc)$ is called an \emph{algebraic Lie algebra} if 
the subgroup of $\End(\Vc)^\times$ generated by $\{\exp X\mid X\in\gg\}$ is 
the connected component of $\1$ in an algebraic group.  
See \cite[subsection 2.1 a)]{Pu69} for a discussion on why this definition is equivalent to the definition in terms of replicas.

\begin{lemma}\label{alg1}
Let $\Vc$ be a finite-dimensional real vector space with a solvable Lie algebra $\gg\subseteq\End(\Vc)$. 
Denote $k:=\dim_{\RR}\Vc$ and $\n:=\{X\in\gg\mid X^k=0\}$. 
Then $\gg$ is an algebraic Lie algebra if and only if there exists an abelian subalgebra $\ag\subseteq\gg$ satisfying the following conditions: 
\begin{enumerate}
\item One has the direct sum decomposition $\gg=\ag\dotplus\n$. 
\item There exists a basis $\{X_1,\dots,X_r\}$ of $\ag_{\CC}:=\ag\otimes_{\RR}\CC\hookrightarrow\End(\Vc_{\CC})$ 
such that for $j=1,\dots,k$ the operator $X_j\in\End(\Vc_{\CC})$ is diagonalizable and its spectrum consists of integer numbers. 
\end{enumerate}
\end{lemma}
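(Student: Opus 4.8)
The plan is to derive the lemma from the structure theory of connected solvable linear algebraic groups (as indicated in \cite[subsection 2.1 a)]{Pu69}), the genuinely new point being the normalization that makes the spectra integral. I would begin with a preliminary observation valid for an arbitrary solvable $\gg\subseteq\End(\Vc)$: the set $\n$ is an ideal of $\gg$ and $\gg/\n$ is abelian. Indeed, $\gg_{\CC}$ is solvable, so by Lie's theorem there is a basis of $\Vc_{\CC}$ in which $\gg_{\CC}$ is upper triangular; writing $\bg\subseteq\End(\Vc_{\CC})$ for the full upper triangular subalgebra, one has $\n_{\CC}:=\{X\in\gg_{\CC}\mid X\text{ nilpotent}\}=\gg_{\CC}\cap[\bg,\bg]$, which is an ideal of $\gg_{\CC}$ because $[\bg,\bg]$ (the strictly upper triangular matrices) is an ideal of $\bg$, and $\gg_{\CC}/\n_{\CC}$ embeds in the abelian algebra $\bg/[\bg,\bg]$. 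Since nilpotency of an operator is detected after complexification, $\n=\n_{\CC}\cap\End(\Vc)$ is a real form of $\n_{\CC}$, hence a real ideal of $\gg$ with $\gg/\n$ abelian; in particular $[\gg,\gg]\subseteq\n$.

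For the forward implication, assume $\gg$ is algebraic, so $\gg=\opn{Lie}(G^{\circ})$ for a connected solvable $\RR$-algebraic subgroup $G\subseteq\End(\Vc)^{\times}$. Over the perfect field $\RR$ the structure theory of connected solvable groups yields a semidirect decomposition $G^{\circ}=T\ltimes R_u(G^{\circ})$ with $T$ a maximal torus defined over $\RR$; on Lie algebras this reads $\gg=\ag\dotplus\opn{Lie}(R_u(G^{\circ}))$, where $\ag:=\opn{Lie}(T)$ is abelian and consists of semisimple operators while $\opn{Lie}(R_u(G^{\circ}))$ consists of nilpotent operators, hence lies in $\n$. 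As $\ag\cap\n=\{0\}$, comparing dimensions with $\dim\gg/\n$ forces $\opn{Lie}(R_u(G^{\circ}))=\n$, so $\gg=\ag\dotplus\n$. To produce the basis of $\ag_{\CC}$, complexify: $\ag_{\CC}=\opn{Lie}(T_{\CC})$ with $T_{\CC}$ a split torus, and $\Vc_{\CC}=\bigoplus_{\chi}\Vc_{\chi}$ decomposes into weight spaces indexed by the character group $X^{*}(T_{\CC})$. Every algebraic one-parameter subgroup $\mu\colon\CC^{\times}\to T_{\CC}$ differentiates to an element of $\ag_{\CC}$ acting on $\Vc_{\chi}$ by the integer $\langle\chi,\mu\rangle$, hence diagonalizable with integral spectrum; since these cocharacters span $\ag_{\CC}$ over $\CC$, one may pick among them a $\CC$-basis $X_1,\dots,X_r$ of $\ag_{\CC}$.

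For the converse, suppose $\gg=\ag\dotplus\n$ with $\ag$ abelian and $\ag_{\CC}=\spann_{\CC}\{X_1,\dots,X_r\}$, where the $X_j$ pairwise commute and each is diagonalizable with spectrum in $\ZZ$. Then the $X_j$ are simultaneously diagonalizable, so every element of $\ag$ is semisimple, and the common eigenspace decomposition of $\Vc_{\CC}$ exhibits $\ag_{\CC}$ as the Lie algebra of an algebraic torus $T_{\CC}\subseteq\End(\Vc_{\CC})^{\times}$ (cut out by the monomial relations read off from the integer weight vectors of $X_1,\dots,X_r$). This torus is stable under complex conjugation because $\ag_{\CC}$ is, so it is defined over $\RR$. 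On the other hand $\n$, being a Lie algebra of nilpotent operators, is the Lie algebra of the connected unipotent — hence $\RR$-algebraic — group $U:=\exp(\n)$; and since $\n$ is an ideal of $\gg$, $\ad(\ag_{\CC})$ preserves $\n_{\CC}$, so $T_{\CC}$ normalizes $U_{\CC}:=\exp(\n_{\CC})$. Thus $H:=T_{\CC}\ltimes U_{\CC}$ is a connected $\RR$-algebraic subgroup of $\End(\Vc_{\CC})^{\times}$ with $\opn{Lie}(H)=\ag_{\CC}\oplus\n_{\CC}=\gg_{\CC}$, its group $H(\RR)$ of real points is an algebraic group in the sense of the paper, and $H(\RR)^{\circ}$ is the unique connected Lie subgroup of $\End(\Vc)^{\times}$ with Lie algebra $\gg$, namely $\langle\exp X\mid X\in\gg\rangle$. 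Hence $\gg$ is algebraic.

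The main obstacle I anticipate lies in the forward implication: one must invoke the $\RR$-rational form of the decomposition $G^{\circ}=T\ltimes R_u(G^{\circ})$ with $T$ a maximal $\RR$-torus, and then carefully extract an integral $\CC$-basis of $\ag_{\CC}$ from the cocharacter lattice (using $X_{*}(T_{\CC})\otimes_{\ZZ}\CC\cong\ag_{\CC}$ and the pairing with weights). Alternatively, following \cite[subsection 2.1 a)]{Pu69}, everything could be phrased via replicas and Jordan decomposition, which circumvents conjugacy of maximal tori at the price of a more computational handling of the semisimple parts of elements of $\gg$. The remaining steps — passing between $\gg$ and $\gg_{\CC}$, checking that $U$ and $T_{\CC}\ltimes U_{\CC}$ are Zariski closed and $\RR$-defined, and matching analytic subgroups with their Lie algebras — are routine.
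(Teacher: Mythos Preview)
Your proposal is correct and aligns with the paper's approach: the paper's proof is nothing but the citation ``See \cite[subsection 2.1 a)]{Pu69}'', and your argument is precisely a fleshing-out of that reference via the structure theory of connected solvable linear algebraic groups (semidirect decomposition into torus and unipotent radical, cocharacter lattice for the integral basis), which you yourself cite. There is nothing to compare.
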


\begin{proof}
See \cite[subsection 2.1 a)]{Pu69}. 
\end{proof}

\begin{lemma}\label{alg2}
If $G$ is a connected simply connected solvable Lie group for which the Lie algebra $\ad_{\gg}(\gg)$ is algebraic, 
then both $G$ and its tangent group $TG$ are of type~I. 
In particular, if $B$ is the connected simply connected Lie group corresponding to a Borel subalgebra of a complex semisimple Lie algebra, then both $B_{\RR}$ and its tangent group $T(B_{\RR})$  
are solvable Lie groups of type~I. 
\end{lemma}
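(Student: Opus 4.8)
The plan is to deduce both statements from the classical theorem that a connected, simply connected solvable Lie group with algebraic adjoint algebra is of type~I, combined with Lemma~\ref{alg1}. The input I would quote is the following fact, essentially due to Pukanszky (see \cite{Pu69}): if $H$ is a connected, simply connected solvable Lie group and $\ad_{\hg}(\hg)\subseteq\End(\hg)$ is an algebraic Lie algebra, then $H$ is of type~I (algebraicity of $\ad_{\hg}(\hg)$ forces every coadjoint orbit of $H$ to be locally closed, and together with simple connectedness this yields the type~I property through the Auslander--Kostant/Pukanszky criterion). Granting this, the whole proof reduces to checking that the hypothesis is inherited by the tangent group and that it holds in the Borel case.

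For the tangent group I would realize $TG$ as the semidirect product $G\ltimes\gg$, with $G$ acting on the vector group $\gg$ by $\Ad_G$; it is again connected, simply connected and solvable, with Lie algebra the Takiff algebra $\tg=\gg\ltimes_{\ad}\gg$, whose bracket is $[(X_1,Y_1),(X_2,Y_2)]=([X_1,X_2],[X_1,Y_2]+[Y_1,X_2])$. Writing an element as $X+\varepsilon Y$ and the underlying space as $\gg\oplus\varepsilon\gg$, one gets the block form
$$\ad_{\tg}(X+\varepsilon Y)=\begin{pmatrix}\ad_\gg X & 0\\ \ad_\gg Y & \ad_\gg X\end{pmatrix},$$
so $\ad_{\tg}(\tg)$ consists of all block matrices with equal diagonal blocks $A$ and arbitrary lower-left block $B$, where $A,B$ range over $\hg:=\ad_\gg(\gg)$, which is algebraic by hypothesis. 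Then I would invoke Lemma~\ref{alg1} for $\hg$ to fix an abelian subalgebra $\ag\subseteq\hg$ with $\hg=\ag\dotplus\n$ ($\n$ the nilpotent part) and with $\ag_{\CC}$ admitting a basis of diagonalizable operators with integer spectra, and propose the splitting $\ad_{\tg}(\tg)=\widetilde\ag\dotplus\widetilde\n$, where $\widetilde\ag$ is the set of block matrices $\bigl(\begin{smallmatrix}A & 0\\ 0 & A\end{smallmatrix}\bigr)$ with $A\in\ag$ and $\widetilde\n$ the set of block matrices $\bigl(\begin{smallmatrix}A & 0\\ B & A\end{smallmatrix}\bigr)$ with $A\in\n$, $B\in\hg$. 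The verifications are routine: $\widetilde\ag$ is abelian, isomorphic to $\ag$, and $\widetilde\ag_{\CC}$ inherits a basis of diagonalizable operators with integer spectra (each $\bigl(\begin{smallmatrix}A & 0\\ 0 & A\end{smallmatrix}\bigr)$ has the spectrum of $A$); $\widetilde\n$ is a subalgebra (a short bracket computation); a block matrix $\bigl(\begin{smallmatrix}A & 0\\ B & A\end{smallmatrix}\bigr)$ is a nilpotent operator if and only if $A$ is, so $\widetilde\n$ is exactly the nilpotent part of $\ad_{\tg}(\tg)$; and $\widetilde\ag\cap\widetilde\n=\{0\}$ because $\ag\cap\n=\{0\}$. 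Applying Lemma~\ref{alg1} again, now with $\Vc=\tg$ and noting that $\tg$ is solvable, I would conclude that $\ad_{\tg}(\tg)$ is algebraic, hence $TG$ is of type~I by the quoted theorem.

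For the last assertion I would note that a Borel subalgebra $\bg$ of a complex semisimple Lie algebra $\sg$ is the Lie algebra of a connected algebraic group, namely a Borel subgroup of the adjoint group of $\sg$, and that $\bg$ has trivial centre; hence $\ad_\bg\colon\bg\to\End(\bg)$ is injective with algebraic image $\ad_\bg(\bg)$, and, passing to underlying real structures, $\ad_{\bg_{\RR}}(\bg_{\RR})$ is the realification of a complex algebraic Lie algebra of operators, hence a real algebraic Lie algebra of operators on $\bg_{\RR}$. Since $B_{\RR}$ is connected, simply connected and solvable, the quoted theorem gives that $B_{\RR}$ is of type~I, and the tangent-group step gives the same for $T(B_{\RR})$.

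I expect the only genuine obstacle to be the bookkeeping in the tangent-group step, specifically confirming that $\widetilde\n$ coincides with the set of nilpotent elements of $\ad_{\tg}(\tg)$ and that the proposed splitting meets all the requirements of Lemma~\ref{alg1}; quoting Pukanszky's type~I theorem and treating the Borel case are comparatively soft.
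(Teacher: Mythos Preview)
Your proposal is correct and follows essentially the same strategy as the paper: both quote Puk\'anszky's criterion \cite[Prop.~2.2]{Pu69} and reduce the tangent-group claim to showing, via Lemma~\ref{alg1}, that $\ad_{\tg}(\tg)$ inherits algebraicity from $\ad_{\gg}(\gg)$. Your block-matrix splitting $\widetilde\ag\dotplus\widetilde\n$ is exactly the paper's decomposition $\{\ad_{\widetilde\gg}(X,0):\ad_\gg X\in\ag\}\dotplus\{\ad_{\widetilde\gg}(X,Y):\ad_\gg X\in\n,\ Y\in\gg\}$, just written more explicitly.

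The only genuine divergence is in the Borel paragraph. The paper verifies Lemma~\ref{alg1} directly from the root-space decomposition $\gg=\hg\dotplus\n^{+}$: it takes $\ag=\ad_\gg(\hg)$, $\n=\ad_\gg(\n^{+})$, and invokes the structure theory of semisimple Lie algebras to see that $\ad_\gg(\hg)$ admits a basis (coming from coroots) of diagonalizable operators with integer spectra. You instead argue abstractly that a Borel subgroup of the adjoint group is algebraic, that $\Ad$ is a rational representation, and that Weil restriction carries complex algebraicity to real algebraicity of $\ad_{\bg_{\RR}}(\bg_{\RR})$. Both routes are valid. Yours sidesteps the bookkeeping of what Lemma~\ref{alg1}'s condition on $\ag_{\CC}$ becomes after restricting scalars from $\CC$ to $\RR$ (a point the paper leaves implicit), at the price of importing a little algebraic-group machinery; the paper's route is more self-contained but terser on that realification step.
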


\begin{proof}
It follows by \cite[Prop. 2.2]{Pu69} that $G$ is a Lie group of type~I. 
The tangent group $\widetilde{G}:=TG=G\ltimes\gg$ has its Lie algebra 
$\widetilde{\gg}:=\gg\ltimes\gg$, whose Lie bracket is given by 
$[(X_1,Y_1),(X_2,Y_2)]=([X_1,X_2],[X_1,Y_2]-[X_2,Y_1])$ for all $X_1,X_2,Y_1,Y_2\in\gg$. 
Since $\ad_{\gg}(\gg)$ is an algebraic Lie algebra of endomorphisms of the linear space $\gg$, 
one has a direct sum decomposition $\ad_{\gg}(\gg)=\ag\dotplus\n$ as in Lemma~\ref{alg1}. 
Then the above formula for the Lie bracket of $\widetilde{\gg}$ shows that 
$\ad_{\widetilde{\gg}}(\widetilde{\gg})=\widetilde{\ag}\dotplus\widetilde{\gg}$, 
where $\widetilde{\ag}:=\{\ad_{\widetilde{\gg}}(X,0)\mid X\in\ag\}$ is an abelian Lie algebra 
of diagonalizable endomorphisms of the linear space $\gg$ having integer spectra, and  
$\widetilde{\n}:=\{\ad_{\widetilde{\gg}}(X,Y)\mid X\in\n,Y\in\gg\}$ consists of 
nilpotent endomorphisms of the linear space $\gg$. 
Hence $\widetilde{G}$ is a connected simply connected solvable Lie group for which 
the Lie algebra $\ad_{\widetilde{\gg}}(\widetilde{\gg})$ is algebraic by Lemma~\ref{alg1}, 
and then $\widetilde{G}$ is a Lie group of type~I by by \cite[Prop. 2.2]{Pu69} again. 

For the second assertion, we use the notation of Example~\ref{Pr4} 
and we check that the Lie algebra $\ad_{\gg}(\gg)$ is algebraic. 
We have $\gg=\hg\dotplus\n^{+}$, hence $\ad_{\gg}(\gg)=\ad_{\gg}(\hg)+\ad_{\gg}(\n^{+})$. 
Here  $\ad_{\gg}(\hg)$ consists of diagonalizable linear maps on~$\gg$ while $\ad_{\gg}(\n^{+})$ consists of nilpotent maps, 
hence $\ad_{\gg}(\hg)\cap\ad_{\gg}(\n^{+})=\{0\}$. 
Moreover, it follows by the structure theory of complex semisimple Lie algebras 
that $\ad_{\gg}(\hg)$ has a basis consisting of maps whose spectra contain only integer numbers. 
It then follows by Lemma~\ref{alg1} that $\ad_{\gg}(\gg)$  is an algebraic Lie algebra, and we are done. 
\end{proof}

\begin{lemma}\label{alg3}
For any connected complex Lie group $G$, its corresponding real Lie group $G_{\RR}$ is an exponential Lie group 
if and only if $G$ is simply connected and nilpotent. 
\end{lemma}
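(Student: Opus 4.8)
The plan is to derive both implications from the root criterion for exponential solvable Lie groups recorded in the excerpt (via \cite[Subsect. 5.2.2]{FL15}): a connected simply connected solvable Lie group $H$ with Lie algebra $\hg$ is exponential precisely when every root of $\hg$ (i.e.\ every weight of the adjoint module $\hg$) has the form $(1+\ie c)\gamma$ with $c\in\RR$ and $\gamma\in\hg^*$. Recall also that an exponential Lie group is automatically connected, simply connected and solvable, with $\exp$ a diffeomorphism, and that for $Y\in\hg$ the values $\lambda(Y)$ of the roots $\lambda$ of $\hg$ are exactly the eigenvalues over $\CC$ of $\ad_{\hg}(Y)$ (apply Lie's theorem to $\hg\otimes_{\RR}\CC$). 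Before starting I will record two elementary bookkeeping facts, writing $\gg$ for the complex Lie algebra of $G$: since the bracket of $\gg$ is $\CC$-bilinear and $\ie[X,Y]=[\ie X,Y]$, the real span of the brackets is already a complex subspace, so the derived and lower central series of $\gg_{\RR}$ and of $\gg$ coincide; hence $\gg_{\RR}$ is solvable (resp.\ nilpotent) iff $\gg$ is, i.e.\ $G_{\RR}$ is solvable (resp.\ nilpotent) iff $G$ is. And $G$, $G_{\RR}$ have the same underlying manifold, so one is simply connected iff the other is.

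For the ``if'' direction: if $G$ is simply connected and nilpotent, then $G_{\RR}$ is a connected, simply connected, nilpotent real Lie group, hence its exponential map is a diffeomorphism (the classical Malcev/Dixmier fact), so in particular it is bijective and $G_{\RR}$ is exponential.

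For the ``only if'' direction: assume $G_{\RR}$ is exponential. Then $G_{\RR}$, and by the bookkeeping above also $G$, is connected, simply connected and solvable, so it only remains to show $\gg$ is nilpotent. I will argue by contradiction. If $\gg$ is not nilpotent, Engel's theorem produces $X\in\gg$ with $\ad_{\gg}(X)$ non-nilpotent, hence with a nonzero eigenvalue $\mu\in\CC\setminus\{0\}$. The key move is to rescale by a complex scalar: set $Y:=(\ie/\mu)X\in\gg$, so $\ad_{\gg}(Y)=(\ie/\mu)\ad_{\gg}(X)$ has $\ie$ among its eigenvalues. Passing to the underlying real picture, under the canonical isomorphism $(\gg_{\RR})\otimes_{\RR}\CC\simeq\gg\oplus\overline{\gg}$ the complexification of $\ad_{\gg_{\RR}}(Y)$ acts as $\ad_{\gg}(Y)$ on the first summand and as its conjugate on the second, so $\ie$ is still an eigenvalue; thus $\gg_{\RR}$ has a root $\lambda$ with $\lambda(Y)=\ie$. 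Exponentiality of $G_{\RR}$ gives $\lambda=(1+\ie c)\gamma$ with $c\in\RR$ and $\gamma$ real-valued on $\gg_{\RR}$, whence $\ie=(1+\ie c)\gamma(Y)$ with $\gamma(Y)\in\RR$ — impossible, since then the real part of the right-hand side is $\gamma(Y)=0$ while the imaginary part is $c\gamma(Y)=1$. Hence $\gg$, and so $G$, is nilpotent.

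The only genuinely delicate point — and the thing I expect to be the crux — is this complex-rescaling trick, which is exactly where the complex structure of $\gg$ enters: a merely real eigenvalue of some $\ad_{\gg}(X)$ would be harmless for exponentiality of $G_{\RR}$, but multiplying $X$ by $\ie/\mu$ rotates any nonzero eigenvalue into the purely imaginary axis, which is fatal for the root criterion. Everything else — the relation between the spectrum of a complex operator and that of the underlying real operator, the series bookkeeping, and the standard facts about exponential and simply connected nilpotent Lie groups — is routine.
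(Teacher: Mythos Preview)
Your proof is correct. The paper itself does not give an argument for this lemma but simply cites \cite[Cor.~of Th.~1, p.~118]{Dix57}, so your self-contained proof via the root criterion is a genuine addition rather than a reproduction of the paper's reasoning. The complex-rescaling step---replacing $X$ by $(\ie/\mu)X$ to force a purely imaginary adjoint eigenvalue, which is precisely what the root criterion $\lambda=(1+\ie c)\gamma$ with $\gamma$ real-valued forbids---is indeed the heart of the matter, and your handling of the complexification $(\gg_{\RR})\otimes_{\RR}\CC\simeq\gg\oplus\overline{\gg}$ to transport that eigenvalue to the real picture is sound. The remaining bookkeeping (that exponential implies simply connected and solvable, that the derived and lower central series of $\gg$ and $\gg_{\RR}$ coincide, Engel's theorem) is routine, as you note.
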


\begin{proof}
See \cite[Cor.~of Th.~1, p.~118]{Dix57}.
  \end{proof}

We illustrate Theorem~\ref{Pr2} by an example.
The role of the group $G$ from Theorem~\ref{Pr2} is played by 
$\widetilde{G}/Z(\widetilde{G})$  for Borel subgroups $\widetilde{G}$  of some complex simple Lie groups, where $Z(\widetilde{G})$ is the center of $\widetilde{G}$. 
For this reason we use somewhat unusual notation in structure theory of semisimple Lie groups (see \cite{Kn02}) 
in the sense that we keep the notation $G$ for a quotient group $\widetilde{G}/Z(\widetilde{G})$ as above. 

\begin{example}\label{Pr4}
\normalfont
Let $\sg$ be any complex semisimple Lie algebra, hence its Killing form 
$$B_{\sg}\colon\sg\times\sg\to\mathbb C,\quad B_{\sg}(X,Y):=\Tr((\ad_{\sg}X)(\ad_{\sg}Y))$$
is nondegenerate, 
where for every $X\in\sg$ we define as usual $\ad_{\sg}X\colon\sg\to\sg$, $(\ad_{\sg}X)Y=[X,Y]$ for $X,Y\in\sg$.  
Then there exists a conjugate-linear mapping $\sg\to\sg$, $X\mapsto X^*$, 
such that for all $X,Y\in\sg$ we have $(X^*)^*=X$, $[X,Y]^*=[Y^*,X^*]$ and $B_{\sg}(X,X^*)\ge 0$. 
Such a mapping 
is unique up to an automorphism of~$\sg$. 

Select any Cartan subalgebra $\hg\subseteq\sg$, that is, $\hg$ is a maximal abelian self-adjoint subalgebra of $\sg$. 
Denote the eigenspaces for the adjoint action of $\hg$ on $\sg$ by 
$$\sg^\alpha:=\{X\in\sg\mid(\forall H\in\hg)\quad [H,X]=\alpha(H)X\}
\text{ for a linear functional }\alpha\colon\hg\to\mathbb C$$
and consider the corresponding set of roots
$\Delta(\sg,\hg):=\{\alpha\in\hg^*\setminus\{0\}\mid \sg^\alpha\ne\{0\}\}$. 
Then $\hg=\sg^0=\{X\in\sg\mid[\hg,X]=\{0\}\}$ and the root space decomposition of $\hg$ is given by 
$$\sg=\hg\oplus\bigoplus_{\alpha\in\Delta(\sg,\hg)}\sg^\alpha.$$ 
For every $\alpha\in\Delta(\sg,\hg)$ we have $\dim\sg^\alpha=1$ 
and we may choose $X_\alpha\in\sg^\alpha\setminus\{0\}$ such that $X_\alpha^*=X_{-\alpha}$.

Now fix a basis $\{H_1,\dots,H_m\}$ of $\hg$ with $H_j=H_j^*$ for $j=1,\dots,m$, and define 
$\Delta^+(\sg,\hg)$ as the set of all linear functionals $\varphi\colon\hg\to\CC$ with $\varphi(H^*)=\overline{\varphi(H)}$ for all $H\in\hg$ and for wich there exists $k\in\{1,\dots,m\}$ with $\varphi(H_j)=0$ if $1\le j\le k-1$ and $\varphi(H_k)>0$. 
One can prove that $\Delta(\sg,\hg)=\Delta^{+}(\sg,\hg)\sqcup(-\Delta^+(\sg,\hg))$. 
The elements of the set  $\alpha\in\Delta^{+}(\sg,\hg)\setminus(\Delta^{+}(\sg,\hg)+\Delta^{+}(\sg,\hg))$ are called simple roots. 
We label the simple roots as $\{\alpha_1,\dots,\alpha_r\}$, 
and one can prove that this set is a basis of the dual space $\hg^*$, hence $r=\dim\hg$, and this is called the rank of $\sg$ 
(see for instance \cite{Kn02}).   
The above root space decomposition of $\sg$ implies the triangular decomposition 
$$\sg=\n^{-}\dotplus\hg\dotplus\n^{+}$$
where $\n^{\pm}:=\bigoplus\limits_{\pm\alpha\in\Delta^+(\sg,\hg)}\sg^\alpha$. 

We are interested in the Lie algebra 
$$\gg:=\hg\dotplus\n^{+}=\hg\ltimes\n^{+},$$ 
which is a Borel subalgebra of $\sg$. 
This is a solvable complex Lie algebra,
and we denote by $\widetilde{G}$ its corresponding connected subgroup
of
the connected, simply connected Lie group associated to~$\sg$.
It is well known and easily checked that the center of $\gg$ is trivial and 
the center $Z(\widetilde{G})$ of $\widetilde{G}$ is finite.
We define $G:=\widetilde{G}/Z(\widetilde{G})$, which is then a Lie group
whose Lie algebra is again $\gg$.  
We note however that $G$  is not an exponential Lie group (by Lemma~\ref{alg3}). 

It follows by \cite[Th. 1.7]{Ko12} that the group $\widetilde{G}$ has a nonempty open coadjoint orbit if and only if the rank of $\sg$ is equal to the cardinality of a maximal set $\Bc$ of strongly orthogonal roots obtained by the cascade construction. 
If this is the case, then the open orbit is unique hence it is dense in $\sg^*$.  
More specifically, for every $\beta\in\Bc$ we select a non-zero vector $e_{-\beta}\in\n^{-}$ in the root space corresponding to $-\beta$, and we define 
$$\rr_{-}^\times:=
\Bigl\{\sum_{\beta\in\Bc} t_\beta e_{-\beta}\mid (\forall \beta\in\Bc)\quad t_\beta\in\CC^\times\Bigr\}. $$
Using the linear isomorphism $\gg^*\simeq\n^{-}\dotplus\hg$ 
defined via the Killing form of $\sg$, we consider the coadjoint action of $\widetilde{G}$, 
$$\Ad_{\widetilde{G}}^*\colon \widetilde{G}\times(\n^{-}\dotplus\hg)\to \n^{-}\dotplus\hg$$
and we define the set 
$$\Oc:=\bigcup_{\xi\in\rr_{-}^\times} \Ad_{\widetilde{G}}^*(N)\xi+\hg\subseteq \n^{-}\dotplus\hg.$$
If we assume $\vert \Bc\vert=\dim\hg$, then the following assertions hold: 
\begin{enumerate}[(i)]
	\item\label{center2_item1} 
	The set $\Oc$ is an open dense coadjoint orbit of the Borel subgroup~$\widetilde{G}$ of $S$. 
	\item\label{center2_item2}  For every $\xi\in\rr_{-}^\times$ the corresponding coadjoint isotropy group $\widetilde{G}(\xi)$, which is a discrete subgroup of $\widetilde{G}$, is equal to 
	$$H^{\Bc}:=\{h\in H\mid (\forall \beta\in\Bc)\quad \Ad_{\widetilde{G}}^*(h)e_{-\beta}=e_{-\beta}\}$$
	%which is further equal to 
	%the center of $G$ and is finite. 
\end{enumerate}
Indeed, Assertion~\eqref{center2_item1} follows by \cite[Th. 2.7]{Ko12}. 
The proof of Assertion~\eqref{center2_item2} requires several steps:

Step 1: For all $\xi\in\rr_{-}^\times$ one has $\widetilde{G}(\xi)\cap H=H^{\Bc}$. 
In fact, if $\xi=\sum\limits_{\beta\in\Bc} t_\beta e_{-\beta}$, 
and $h\in H$, then, using the fact that $e_{-\beta}$ is an eigenvector of $\Ad_{\widetilde{G}}^*(h)$ and $t_\beta\ne0$ for all $\beta\in\Bc$, 
we obtain $\Ad_{\widetilde{G}}^*(h)\xi=\xi$ if and only if $h\in H^{\Bc}$. 
%Since $G(\xi)\subseteq H$ by Step 1, we obtain $G(\xi)=H_{\Bc}$. 

Step 2: We now prove that if $\xi\in \rr_{-}^\times$, 
$h\in H$, $n\in N^{+}$, and $nh\in \widetilde{G}(\xi)$, then $h\in H^{\Bc}\subseteq \widetilde{G}(\xi)$ and $n\in \widetilde{G}(\xi)$. 
Indeed, one has $\Ad_{\widetilde{G}}^*(n)(\Ad_{\widetilde{G}}^*(h)\xi)
=\Ad_{\widetilde{G}}^*(nh)\xi=\xi=\Ad_{\widetilde{G}}^*(\1)\xi$ 
with $\Ad_{\widetilde{G}}^*(h)\xi,\xi\in\rr_{-}^\times$, 
hence by \cite[Th. 1.1]{Ko12} we obtain $\Ad_{\widetilde{G}}^*(h)\xi=\xi$. 
Now $h\in H^{\Bc}\subseteq \widetilde{G}(\xi)$ by Step 1. 

Step 3: For all $\xi\in\rr_{-}^\times$ one has  $\widetilde{G}(\xi)=H^{\Bc}$. 
%and this is a discrete subgroup of $G$. 
In fact, 
%since the orbit $\Oc\simeq G/G(\xi)$ is open, 
%it follows that $G(\xi)$ is discrete. 
%On the other hand, 
since $\Ad_{\widetilde{G}}^*\vert_{N^{+}}$ is a unipotent representation, 
it follows that $N^{+}\cap \widetilde{G}(\xi)$ is connected 
(by \cite[Lemma 3.1.1]{CG90}), 
hence $N^{+}\cap \widetilde{G}(\xi)=\{\1\}$. 
Then, by Step 2, we obtain $\widetilde{G}(\xi)\subseteq H^{\Bc}$, 
and further by Step 1, $\widetilde{G}(\xi)= H^{\Bc}$. 

Let $\widetilde{G}$ be a Borel subgroup of the connected simply connected Lie group $S$ corresponding to a complex semisimple Lie algebra $\sg$ as above. 
\textit{
In addition to the condition $\vert\Bc\vert=\dim\hg$, 
let us also assume the following: 
\begin{equation}\label{ZG}
H^{\Bc}=Z(\widetilde{G}), 
\end{equation}}
which is clearly equivalent to $H^{\Bc}=\{h\in H\mid (\forall \beta\in\Pi)\quad \Ad_{\widetilde{G}}^*(h)e_{-\beta}=e_{-\beta}\}$, 
for the system of simple roots $\Pi\subseteq\Delta^{+}(\sg,\hg)$. 

It is well known and easily checked that the center of $\gg$ is trivial and 
the center $Z(\widetilde{G})$ of $\widetilde{G}$ is finite.
We define $G:=\widetilde{G}/Z(\widetilde{G})$, which is then a Lie group
whose Lie algebra is again $\gg$.  
However $G$  is not an exponential Lie group (by Lemma~\ref{alg3}).

For any group $G$ as above, let us denote by $\Vc:=\gg^*$ 
the complex vector space of all $\CC$-linear functionals $v\colon\gg\to\CC$, using the notation convention prior to Lemma~\ref{Pr1}. 
If we define $\tau\colon G\to\Bc(\Vc)$, $\tau(g)v:=v\circ\Ad_G(g^{-1})$ (the complex coadjoint representation of $G$), 
then for every $v_0\in\rr_{-}^\times\subseteq \Vc$ its orbit $\Oc:=\tau(G)v_0$ is a dense open subset of $\Vc$. 
We obtain $G(v_0)=\{\1\}$ since $\widetilde{G}(v_0)=H^\Bc=Z(\widetilde{G})$ 
by Assertion~\eqref{center2_item2} and assumption~\eqref{ZG}. 
Hence the hypothesis of Theorem~\ref{Pr2}  is satisfied and it follows that $G\ltimes\Vc_{\RR}^*$ is a Lie group whose $C^*$-algebra has a faithful irreducible $*$-representation. 
It is easily seen that if we denote by $G_{\RR}$ the complex Lie group $G$ regarded as a real Lie group, 
with the Lie algebra of $G_{\RR}$ denoted by $\gg_{\RR}$, then $G\ltimes\Vc_{\RR}^*=G_{\RR}\ltimes\gg_{\RR}=T(G_{\RR})$ 
is the tangent group of $G_{\RR}$. 
Thus \emph{$T(G_{\RR})$ is a connected solvable Lie group (hence amenable) whose $C^*$-algebra is primitive}. 
Moreover, using Lemma~\ref{alg2}, 
it follows that $T(G_{\RR})$ is a group of type~I. 
However, $T(G_{\RR})$ is not an exponential Lie group, as a consequence of Lemma~\ref{alg3}. 

By \cite[Rem. 2.8]{Ko12} or \cite[Prop. 4.2(1)]{AP97}, 
if $\sg$ is a simple Lie algebra, then it satisfies the above condition $\vert\Bc\vert=\dim\hg$ if and only if 
it is a classical Lie algebra of type $\so(2\ell+1,\CC)$, $\ssp(2\ell,\CC)$, $\so(2\ell,\CC)$ with $\ell$ even, 
or an exceptional Lie algebra of type $E_7$, $E_8$, $F_4$ or $G_2$. 
We will give below an example  
where the other condition \eqref{ZG}~is trivially satisfied. 
It would be interesting to see which other simple algebras from the above list satisfy condition \eqref{ZG} as well.
\end{example}

\begin{example}\label{Pr5}%[{cf. \cite{Ko12}}]
%\label{unique}
\normalfont
Let
 $\sg=\ssp(2,\CC)$, 
which is isomorphic to the Lie algebra 
$$\ssl(2,\CC)=\Bigl\{\begin{pmatrix} a & b \\
c & -a
\end{pmatrix}\mid a,b,c\in\CC\Bigr\}.$$ 
Then one has with the above notation 
$$\hg=\Bigl\{\begin{pmatrix} a & 0 \\
0 & -a
\end{pmatrix}\mid a\in\CC\Bigr\}, \qquad
\n^{+}=\Bigl\{\begin{pmatrix} 0 & b \\
0 & 0
\end{pmatrix}\mid b\in\CC\Bigr\}, $$ 
and then 
$$\gg=\Bigl\{\begin{pmatrix} a & b \\
0 & -a
\end{pmatrix}\mid a,b\in\CC\Bigr\}.$$
The connected subgroup of ${\rm SL}(2,\CC)$ whose Lie algebra is $\gg$ is
$$\widetilde{G}=\Bigl\{\begin{pmatrix} a & b \\
0 & a^{-1}
\end{pmatrix}\mid a,b\in\CC,\ a\ne0\Bigr\}$$
hence a connected complex Lie group of complex dimension 2 whose
center is $Z(\widetilde{G})=\{\pm\1\}$.
We also have $\Pi=\Bc=\Delta^{+}(\gg,\hg)$ 
(since these sets contain only one element)
hence \eqref{ZG} is automatically satisfied. 

One can also describe $\widetilde{G}$ as the semidirect product
$$\widetilde{G}=(\CC\setminus\{0\})\ltimes\CC,\quad
(a_1,b_1)\cdot(a_2,b_2):=(a_1a_2,a_1b_2+b_1a_2^{-1})$$
for all $(a_1,b_1),(a_2,b_2)\in \widetilde{G}$.
This group $\widetilde{G}$ is isomorphic to the $ax+b$-group over the
field~$\CC$ from Example~\ref{simplest}.
Now define $G:=\widetilde{G}/Z(\widetilde{G})$.
It follows by Example~\ref{Pr4} that for the above Lie group~$G$ the $C^*$-algebra of its tangent group $T(G_{\RR})$ is primitive, 
because $G$ has a unique open dense coadjoint orbit, 
unlike the $ax+b$-group over the field~$\RR$, which has two open coadjoint orbits as one can see in Example~\ref{ax+b}. 
We also note that $T(G_{\RR})$ is a connected 8-dimensional solvable Lie group.  
\end{example}

\subsection*{Examples of exponential solvable Lie groups}

We now give one of the simplest examples of Lie groups that fall under the hypotheses of Theorem~\ref{th_exp}. 
Many other examples of trans\-formation-group groupoids can be constructed following this pattern. 
See for instance Example~\ref{real} below. 

\begin{example}[the $ax+b$-group]\label{ax+b}
\normalfont
Let 
$$G=\left\{\begin{pmatrix} a & b \\
                           0 & a^{-1}  
           \end{pmatrix}\in M_2(\RR)\mid  a>0\right\}$$
which is a multiplicative group of matrices. 
For the tautological action of $G$ on~$\RR^2$ denote by $G_{x,y}$ and $\Oc_{x,y}$ the isotropy group and the $G$-orbit 
of any $\begin{pmatrix} x\\ y \end{pmatrix}\in\RR^2$. 
Then one has 
\begin{itemize}
\item if $y\ne0$, then $G_{x,y}=\{\1\}$ and $\Oc_{x,y}=\left\{\begin{pmatrix} u\\ v \end{pmatrix}\in\RR^2 \mid uy>0\right\}$; 
\item if $y=0\ne x$, then 
$$G_{x,0}=\Bigl\{\begin{pmatrix} 1 & b \\
                           0 & 1  
           \end{pmatrix}\mid  b\in\RR\Bigr\} 
\text{ and }\Oc_{x,0}=\left\{\begin{pmatrix} u\\ 0 \end{pmatrix}\in\RR^2 \mid ux>0\right\};$$  
\item if $y=x=0$, then $G_{0,0}=G$ 
 and $\Oc_{0,0}=\left\{\begin{pmatrix} 0\\ 0 \end{pmatrix}\right\}$. 
\end{itemize}
Consequently, if we define 
$$M_{\pm}:=\left\{\begin{pmatrix} x\\ y \end{pmatrix}\in\RR^2 \mid \pm y\ge 0, \ x^2+y^2\ne0\right\}
\supset \left\{\begin{pmatrix} x\\ y \end{pmatrix}\in\RR^2 \mid \pm y> 0\right\}=:U_{\pm}$$
then $M_{\pm}$ is acted on by the group $G$ with dense open orbit $U_{\pm}$ 
and the isotropy groups at points of the boundary $F:=M_{\pm}\setminus U_{\pm}$ 
are all isomorphic to $(\RR,+)$. 

Hence the transformation-group groupoids $\Gc_{\pm}:=G\ltimes M_{\pm}\tto M_{\pm}$ 
have dense open orbits $U_{\pm}$ with $(\Gc_{\pm})_{U_{\pm}}=U_{\pm}\times U_{\pm}$, 
and moreover $(\Gc_{\pm})_{F}=\RR\times F\to F$ is a trivial group bundle on $F=\RR\setminus\{0\}$. 
We get thus immediately the following short exact sequence
$$ 0 \rightarrow \Kc(L^2(U_{\pm})) \rightarrow C^*(\Gc_{\pm}) \rightarrow \Cc_0(\RR\setminus\{0\}, \Cc_0(\RR))\rightarrow 0.$$
\end{example}

In the following example we describe a more general situation, which recovers Example~\ref{ax+b} for $\sg=\su(1,1)$. 

\begin{example}\label{real}
\normalfont
Now let $\sg=\kg+\pg$ be a Cartan decomposition of a real semisimple Lie algebra. 
Select any maximal abelian subspace $\ag\subseteq\pg$, consider the corresponding restricted-root spaces and define  
$$\n=\bigoplus_{\alpha\in\Delta^{+}(\sg,\ag)}\sg^\alpha$$
with respect to a lexicographic ordering on $\ag^*$. 
Then one has the Iwasawa decomposition $\sg=\kg\dotplus\ag\dotplus\n$. 

We are mainly interested in the Lie algebra 
$$\gg:=\ag\dotplus\n=\ag\ltimes\n,$$ 
which is a Borel subalgebra of $\sg$. 
This is a completely solvable Lie algebra, hence its corresponding connected, simply connected Lie group~$G$ is an exponential Lie group. 
Moreover, denoting $r:=\dim\ag$, one can prove that if $\sg$ is of hermitian type, 
then $G$ has $2^r$ open coadjoint orbits 
(see \cite[Prop. 3.3.1]{RV76} and \cite[Th. 1.3]{In01}). 
\end{example}

\subsection*{Acknowledgment}

We thank Karl-Hermann Neeb for some useful remarks on the $ax+b$-groups. 
We also thank the Referees for numerous most useful comments, which have resulted in a considerable improvement of our manuscript.


\begin{thebibliography}{1000000}

\bibitem[AlPe97]{AP97}
D.V.~Alekseevsky, A.M.~Perelomov, 
Poisson and symplectic structures on Lie algebras. 
{\it J. Geom. Phys.} {\bf 22} (3) (1997), 191--211. 

\bibitem[ArSo11]{AS11}
R.J.~Archbold, D.W.B.~Somerset, 
Multiplier algebras of $C_0(X)$-algebras. 
\textit{M\"unster J. Math.} \textbf{4} (2011), 73--100.

\bibitem[ACD09]{ACD09}
D.~Arnal, B.~Currey, B.~Dali, 
Construction of canonical coordinates for exponential Lie groups. 
\textit{Trans. Amer. Math. Soc}  \textbf{361} (2009), no. 12, 6283--6348. 


\bibitem[BB16a]{BB16a} 
I.~Belti\c t\u a, D.~Belti\c t\u a, 
On $C^*$-algebras of exponential solvable Lie groups and their real ranks. 
{\it J. Math. Anal. Appl.} {\bf 437} (2016), no. 1, 51--58.

\bibitem[BB16b]{BB16b}
I.~Belti\c t\u a, D.~Belti\c t\u a, 
Representations of nilpotent Lie groups via measurable dynamical systems. 
In: 
%P. Kielanowski, S.T. Ali, P. Bieliavsky, A. Odzijewicz, M. Schlichenmaier, T. Voronov (eds.), 
{\it Geometric Methods in Physics}. 
%XXXIV Workshop, Białowieża, Poland, June 28 - July 4, 2015. 
Birkh\"auser, Basel, 2016, pp. 79--88.
%{\it Preprint} arXiv:1510.05272 [math.RT].

\bibitem[BBL16]{BBL14}
I.~Belti\c t\u a, D.~Belti\c t\u a, J.~Ludwig, 
Fourier transforms of $C^*$-algebras of nilpotent Lie groups. 
{\it Int. Math. Res. Not. IMRN} (2017), no. 3, 677--714.


\bibitem[BCFS14]{BCFS14}
J.~Brown, L.O.~Clark, C.~Farthing, A.~Sims, 
Simplicity of algebras associated to \'etale groupoids.
{\it Semigroup Forum} {\bf 88} (2014), no. 2, 433--452. 

\bibitem[Bu03]{Bu03}
M. Buneci, Isomorphism for transitive groupoid $C^*$-algebras. 
In: {\it Proceedings of CAIM 2003}, 
%(Conference on Applied and Industrial Mathematics), 
Vol. 1, Editura Univ. Oradea, 2003, pp. 42--46. 

\bibitem[Cl07]{Cl07}
L.O.~Clark, 
CCR and GCR groupoid $C^*$-algebras. 
{\it Indiana Univ. Math. J.} {\bf 56} (2007), no.~5, 2087--2110.

\bibitem[CAR13]{CAR13}
L.O. Clark, A.~an Huef, I.~Raeburn, 
The equivalence relations of local homeomorphisms and Fell algebras. 
\textit{New York J. Math.} \textbf{19} (2013), 367--394.

\bibitem[CG90]{CG90}
L.J.~Corwin, F.P.~Greenleaf, 
{\it Representations of nilpotent Lie groups and their applications}. Part I.
%Basic theory and examples. 
Cambridge Studies in Adv. Math., 18. Cambridge Univ. Press, %Cambridge, 
1990.


\bibitem[Cu92]{Cu92}
B.N.~Currey, 
The structure of the space of coadjoint orbits of an exponential solvable Lie group. 
\textit{Trans. Amer. Math. Soc.} \textbf{332} (1992), no.~1, 241--269.

\bibitem[CuPe89]{CuPe89} 
B.N.~Currey, R.C.~Penney, 
The structure of the space of co-adjoint orbits of a completely solvable Lie group. 
\textit{Michigan Math. J.} {\bf 36} (1989), no. 2, 309--320.

\bibitem[Dix57]{Dix57}
J.~Dixmier,
L'application exponentielle dans les groupes de Lie r\'esolubles. 
\textit{Bull. Soc. Math. France} \textbf{85} (1957), 113--121. 


\bibitem[Dix64]{Dix64}
J.~Dixmier, 
{\it Les $C^*$-alg\`ebres et leurs repr\'esentations}. 
Cahiers Scientifiques, Fasc. XXIX Gauthier-Villars \& Cie, \'Editeur-Imprimeur, Paris, 1964.

\bibitem[DV69]{DV69}
M.~Duflo, M.~Vergne, 
Une propri\'et\'e de la repr\'esentation coadjointe d'une alg\`ebre de Lie.  
{\it C. R. Acad. Sci. Paris S\'er. A-B} {\bf 268} (1969), A583--A585. 


\bibitem[ET79]{ET79}
P.~Eymard, M.~Terp, 
La transformation de Fourier et son inverse sur le groupe des ax+b d'un corps local. 
In: %P.~Eymard, J.~Faraut, G.~Schiffmann, R.~Takahashi (eds.), 
{\it Analyse harmonique sur les groupes de Lie} %(Sém., Nancy-Strasbourg 1976–1978), 
II, 
Lecture Notes in Math., 739, Springer, Berlin, 1979, pp. 207--248. 

\bibitem[Fe64]{Fe64}
J.M.G.~Fell, 
Weak containment and induced representations of groups. II. 
{\it Trans. Amer. Math. Soc.} {\bf 110} (1964), 424--447. 


\bibitem[FL15]{FL15}
H.~Fujiwara, J.~Ludwig, 
{\it Harmonic analysis on exponential solvable Lie groups}. 
Springer Monographs in Mathematics. Springer, Tokyo, 2015.

\bibitem[Gl62]{Gl62}
J.~Glimm, 
Families of induced representations. 
{\it Pacific J. Math.} {\bf 12} (1962), 885--911.

\bibitem[Go10]{Go10}
G.~Goehle, The Mackey machine for crossed products by regular groupoids. I. 
{\it Houston J. Math.} {\bf 36} (2010), no. 2, 567--590.

\bibitem[Go12]{Go12}
G.~Goehle, 
The Mackey machine for crossed products by regular groupoids. II. 
{\it Rocky Mountain J. Math.} {\bf 42} (2012), no.~3, 873--900.

%\bibitem[GS99]{GS99}
%V.Ya.~Golodets, S.D.~Sinelʹshchikov, 
%On the conjugacy and isomorphism problems for stabilizers of Lie group actions. 
%{\it Ergodic Th. Dynam. Systems} {\bf 19} (1999), no.~2, 391--411. 

\bibitem[Ho52]{Ho52} 
G.~Hochschild, 
The automorphism group of a Lie group,
\textit{Trans. Amer. Math. Soc.} \textbf{72} (1952), 209--216.

\bibitem[Hu94]{Hu94}
D.~Husemoller, 
{\it Fibre bundles}. 
Third edition. Graduate Texts in Mathematics, 20. Springer-Verlag, New York, 1994.

\bibitem[In01]{In01}
J.~Inoue, 
Holomorphically induced representations of some solvable Lie groups. 
\textit{J. Funct. Anal.} {\bf 186} (2001), no. 2, 269--328.
  
\bibitem[IW09]{IW09a}
M.~Ionescu, D.P.~Williams, 
Irreducible representations of groupoid $C^*$-algebras. 
{\it Proc. Amer. Math. Soc.} {\bf 137} (2009), no.~4, 1323--1332.

\bibitem[KS02]{KS02}
M.~Khoshkam, G.~ Skandalis, 
 Regular representation of groupoid $C^*$-algebras and applications to inverse semigroups. 
 \textit{J. Reine Angew. Math.} \textbf{546} (2002), 47--72.

\bibitem[Ko12]{Ko12} 
B.~Kostant, 
Coadjoint structure of Borel subgroups and their nilradicals. 
{\it Preprint} arXiv:1205.23621 [math.RT].

\bibitem[Kn02]{Kn02}
A.W.~Knapp, 
{\it Lie groups beyond an introduction}. Second edition. Progress in Mathematics, 140. 
Birkh\"auser Boston, Inc., Boston, MA, 2002.




\bibitem[LeLu94]{LeLu94}
H.~Leptin, J.~Ludwig,
\textit{Unitary representation theory of exponential Lie groups},
de Gruyter Expositions in Mathematics \textbf{18},
Walter de Gruyter \& Co., Berlin, 1994.
 

\bibitem[Mr96]{Mr96}
J.~Mr\v cun, \textit{Stability and invariants of Hilsum-Skandalis maps},
Ph.D. thesis, Utrecht University, 1996 (arXiv:math/0506484 [math.DG]).

\bibitem[MW90]{MW90}
P.S.~Muhly, D.P.~Williams, 
Continuous trace groupoid $C^*$-algebras. 
{\it Math. Scand.} {\bf 66} (1990), no.~2, 231--241.

\bibitem[Mu03]{Mu03}
G.J.~Murphy, 
Primitivity conditions for full group $C^*$-algebras. 
{\it Bull. London Math. Soc.} {\bf 35} (2003), no.~5, 697--705. 

\bibitem[MRW87]{MRW87}
P.S.~Muhly, J.N.~Renault, D.P.~Williams, 
Equivalence and isomorphism for groupoid $C^*$-algebras. 
{\it J. Operator Theory} {\bf 17} (1987), no.~1, 3--22.

\bibitem[MRW96]{MRW96}
P.S.~Muhly, J.N.~Renault, D.P.~Williams, 
Continuous-trace groupoid $C^*$-algebras. III. 
{\it Trans. Amer. Math. Soc.} {\bf 348} (1996), no. 9, 3621--3641.


\bibitem[MW95]{MW95}
P.S.~Muhly,  D.P.~Williams,
Groupoid cohomology and the Dixmier-Douady class.
\textit{ Proc. London Math. Soc. } (3) \textbf{71} (1995), no. 1, 109--134. 




\bibitem[Oo74]{Oo74}
A.I.~Ooms, 
On Lie algebras having a primitive universal enveloping algebra. 
{\it J. Algebra} {\bf 32} (1974), no. 3, 488--500.


\bibitem[Om14]{Om14}
T.A.~Omland, 
Primeness and primitivity conditions for twisted group $C^*$-algebras. 
{\it Math. Scand.} {\bf 114} (2014), no.~2, 299--319.

\bibitem[Pa88]{Pa88}
A.L.T.~Paterson, 
{\it Amenability}. 
Mathematical Surveys and Monographs, 29. American Mathematical Society, Providence, RI, 1988. 



\bibitem[Pu69]{Pu69}
L.~Puk\'anszky, 
Characters of algebraic solvable groups. 
{\it J. Functional Analysis} {\bf 3} (1969), 435--494.

\bibitem[RWi85]{RWi85}
I.~Raeburn, D.P.~Williams, 
Pull-backs of $C^*$-algebras and crossed products by certain diagonal actions. 
{\it Trans. Amer. Math. Soc.} {\bf 287} (1985), no. 2, 755--777.

\bibitem[Ra90]{Ra90}
A.~Ramsay, 
The Mackey-Glimm dichotomy for foliations and other Polish groupoids. 
{\it J. Funct. Anal.} {\bf 94} (1990), no.~2, 358--374.

\bibitem[Rau16]{Rau16} 
S.~Raum, 
$C^*$-simplicity of locally compact Powers groups. 
{\it J. reine angew. Math.} (to appear). 

\bibitem[Re80]{Re80}
J.~Renault, 
{\it A groupoid approach to $C^*$-algebras}. 
Lecture Notes in Mathematics, 793. Springer, Berlin, 1980.

\bibitem[Re91]{Re91}
J.~Renault, 
The ideal structure of groupoid crossed product $C^*$-algebras. 
With an appendix by Georges Skandalis. 
{\it J. Operator Theory} {\bf 25} (1991), no. 1, 3--36.

\bibitem[Ri68]{Ri68}
R.~Richardson, 
On the variation of isotropy subalgebras. 
In: P.~Mostert (ed.), 
{\it Proceedings of the Conference on Transformation Groups}, 
% (New Orleans, La., 1967), 
Springer, %New York, 
1968, pp.~429--440. 

\bibitem[Ro76]{Ro76}
J.~Rosenberg, 
The $C^*$-algebras of some real and $p$-adic solvable groups. 
{\it Pacific J. Math.} {\bf 65} (1976), no. 1, 175--192. 

\bibitem[RV76]{RV76} 
H.~Rossi, M.~Vergne, 
Equations de Cauchy-Riemann tangentielles associ\'ees \'a un domaine de Siegel. 
{\it Ann. Sci. Ecole Norm. Sup. (4)} {\bf 9} (1976), no.~1, 31--80.



\bibitem[SW13]{SW13}
A.~Sims, D.P.~Williams, 
Amenability for Fell bundles over groupoids. 
{\it Illinois J. Math.} {\bf 57} (2013), no. 2, 429--444.

\bibitem[We93]{We93}
N.E.~Wegge-Olsen, 
{\it K-theory and $C^*$-algebras}. 
Oxford Science Publications. The Clarendon Press, Oxford University Press, New York, 1993.

\bibitem[Wi07]{Wi07}
D.P.~Williams, 
{\it Crossed products of $C^*$-algebras}. 
Mathematical Surveys and Monographs, 134. American Mathematical Society, Providence, RI, 2007. 


\bibitem[Wi16]{Wi15}
D.P.~Williams, 
Haar systems on equivalent groupoids. 
{\it Proc. Amer. Math. Soc. Ser. B} {\bf 3} (2016), 1--8.

\end{thebibliography}
\end{document}